\definecolor{darkspringgreen}{rgb}{0., 0.55, 0.3}
\definecolor{dartmouthgreen}{rgb}{0.05, 0.5, 0.06}
\definecolor{etonblue}{rgb}{0.59, 0.78, 0.64}
\definecolor{airforceblue}{rgb}{0., 0.4, 0.66}
\definecolor{arylideyellow}{rgb}{0.91, 0.84, 0.42}
\definecolor{emerald}{rgb}{0.31, 0.78, 0.47}
\definecolor{uclagold}{rgb}{1.0, 0.7, 0.0}
\definecolor{cadmiumorange}{rgb}{0.93, 0.53, 0.18}
\newtheorem{theorem}{Theorem}
\numberwithin{theorem}{section}
\numberwithin{definition}{section}
\newtheorem{remark}{Remark}
\numberwithin{remark}{section}
\newtheorem{proposition}[theorem]{Proposition}
\newcommand{\lopd}[0]{\mathcal{L}_\Delta}
\newcommand{\lopdt}[0]{\mathcal{L}_{\Delta}}
\newcommand{\usol}[0]{\underline{\uvec{u}}_\Delta}
\newcommand{\uapp}[0]{\uvec{u}_h}
\newcommand{\tess}[0]{\mathcal{T}_h}
\newcommand{\uvec}[2][3]{\boldsymbol{#2\mkern-#1mu}\mkern#1mu}
\newcommand\norm[1]{\left\lVert#1\right\rVert}
\newcommand{\ST}[0]{\boldsymbol{ST}_i^K}
\newcommand{\elres}[0]{\uvec{\Phi}^K(\uapp)}
\newcommand{\noderes}[0]{\uvec{\Phi}^K_i(\uapp)}
\newcommand{\spacestuff}[0]{\bm{\phi}_i}
\newcommand{\cund}[0]{\underline{\uvec{c}}}
\newcommand{\lopdi}[0]{\mathcal{L}_{\Delta,i}}
\newcommand{\GF}[0]{\text{GF}}
\newcommand{\HS}[0]{\text{HS}}
\newcommand{\faces}[0]{\mathcal{F}_h}
\newcommand{\undu}[0]{\underline{\uvec{u}}}
\newcommand{\undz}[0]{\underline{\uvec{z}}}
\newcommand{\dt}{\Delta t}
\newcommand\swapifbranches[3]{#1{#3}{#2}}
\patchcmd{\DeclarePairedDelimiter}{\@ifstar}{\swapifbranches\@ifstar}{}{}
\DeclarePairedDelimiter\abs{\lvert}{\rvert}
\DeclarePairedDelimiter\restrictDelimiters{.}{\vert}
\newcommand\restrict[2]{\ensuremath{{\restrictDelimiters{#1}}_{#2}}}
\newcommand{\RIcolor}[1]{{\leavevmode\color{black} #1}}
\begin{document}
\author{L. Micalizzi\footnote{Affiliation: Department of Mathematics, North Carolina State University, SAS Hall, 2108, 2311 Stinson Dr, Raleigh, 27607, United States. Email: lmicali@ncsu.edu}, M. Ricchiuto\footnote{Affiliation: Team CARDAMOM, INRIA, University of Bordeaux, CNRS, Bordeaux INP, IMB, UMR 5251, France.
Email: mario.ricchiuto@inria.fr} and R. Abgrall\footnote{Affiliation: Institut für Mathematik, Universität Zürich, Winterthurerstrasse 190, Zurich, 8057, Switzerland. Email: remi.abgrall@math.uzh.ch}}
\title{Novel well-balanced continuous interior penalty stabilizations}

\maketitle

\begin{abstract}
In this work, the high order accuracy and the well-balanced (WB) properties of some novel continuous interior penalty (CIP) stabilizations for the Shallow Water (SW) equations are investigated. 
The underlying arbitrary high order numerical framework is given by a Residual Distribution (RD)/continuous Galerkin (CG) finite element method (FEM) setting for the space discretization coupled with a Deferred Correction (DeC) time integration, to have a fully-explicit scheme. 
If, on the one hand, the introduced CIP stabilizations are all specifically designed to guarantee the exact preservation of the lake at rest steady state, on the other hand, some of them make use of general structures to tackle the preservation of general steady states, whose explicit analytical expression is not known.
Several basis functions have been considered in the numerical experiments and, in all cases, the numerical results confirm the high order accuracy and the ability of the novel stabilizations to exactly preserve the lake at rest steady state and to capture small perturbations of such equilibrium. 
Moreover, some of them, based on the notions of space residual and global flux, have shown very good performances and superconvergences in the context of general steady solutions not known in closed-form.
Many elements introduced here can be extended to other hyperbolic systems, e.g., to the Euler equations with gravity.
\end{abstract}

\newpage
\tableofcontents
\section{Introduction}
\label{intro}
In the context of the numerical resolution of hyperbolic partial differential equations (PDEs), one has to deal with several challenges, among which: the presence of instabilities and the exact preservation of some analytical solutions at the discrete level, namely well-balancing. 

The instability issues are usually solved through an upwinding in the Finite Volume/discontinuous Galerkin FEM setting, through stabilization techniques in the RD/CG FEM setting. In particular, in this last context, the existing literature offers many possible options, for example: Streamline-Upwind Petrov-Galerkin, orthogonal subscale stabilization and CIP, respectively introduced in \cite{brooks1982streamline}, \cite{codina1997finite} and \cite{CIP}. For more details, the reader is referred to \cite{michel2021spectral} and \cite{michel2022spectral} in which a complete Fourier analysis and numerical investigation of the mentioned stabilizations, with different basis functions and time discretizations, has been performed.

We refer to well-balancing or C-property as the ability of a numerical scheme to exactly preserve a particular analytical solution or to be superconvergent, toward such solution, with respect to the general accuracy of the underlying discretization. %
In many applications, one is interested in embedding such feature in the adopted numerical method. This happens, for example, in the context of the study of physical systems admitting nontrivial stationary equilibria.
In fact, such systems can stay for very long time in a neighborhood of a steady state. 
For this reason, researchers are interested in studying the evolution of small perturbations of steady solutions and, in this regard, it is desirable not to confuse the evolution of the perturbations with the natural noise arising from the numerical discretization. 
In such context, there are essentially two possibilities: using very refined meshes, with consequent increase in the computational cost, or modifying the numerical scheme in such a way that it preserves exactly the analytical solution of interest, without wasting the accuracy toward any other general solution.
The latter option looks indeed very appealing, however, it is also very challenging since the steady states are usually not available in closed-form and, more in general, we rely on numerics because we do not have the analytical solutions to the systems of PDEs that we are trying to numerically solve.

Several strategies have been introduced to achieve well-balancing. The interested reader is referred to \cite{CaPa,Klin,Var,Klin2,Dumb,Maria,ricchiuto2011c,ciallella2023arbitrary,mantri2024fully} and references therein. In particular, a successful approach is the one introduced in \cite{gascon2001construction} and is based on the definition of a global flux, i.e., a new flux which keeps into account the source term, allowing to recast the initial problem into an equivalent one which is homogeneous.

In this work, we introduce some novel arbitrary high order WB CIP stabilizations for the SW equations in an RD/CG setting. 
They all are designed in such a way to exactly preserve the lake at rest steady state, however, some of them address the challenge of the preservation of general steady equilibria not known in closed-form.
The time discretization is achieved via the bDeCu method, introduced in \cite{micalizzi2023new} as an efficient modification of the DeC for hyperbolic problems designed in \cite{Decremi} to get arbitrary high order fully explicit schemes avoiding the issues associated with the mass matrix.
Even though the results obtained in some multidimensional tests on unstructured meshes will be reported, the numerical validation is mostly performed in a \RIcolor{one-dimensional} setting, which is therefore the reference setting for the theoretical description of the presented notions.

The structure of this work is the following.
We will start by introducing the SW equations and their steady solutions in Section \ref{chapSW}.
Then, we will introduce, in Section \ref{chapRD}, the CG FEM and explain how this can be put into an RD formulation.
The issue of achieving well-balancing in the presented formulation is addressed in Section \ref{chapWB}, which is the main section of this work. There, we will present two WB space discretizations and the novel CIP stabilizations.
In Section \ref{chapDECRD}, we will describe the time-stepping strategy, the bDeCu.
In Section \ref{chapNum}, we will present the numerical results.
Finally, Section \ref{chapConc} is dedicated to conclusions and to future perspectives.


\section{Shallow water equations}\label{chapSW}
The SW equations are a system of hyperbolic PDEs used to model water flows, e.g., flows in sees, rivers, lakes or channels. Their \RIcolor{one-dimensional} formulation, without rain and assuming a bottom topography fixed in time, reads
\begin{equation}
	\label{eq:sys}
	\frac{\partial}{\partial t}\uvec{u}+\frac{\partial}{\partial x} \boldsymbol{F}(\uvec{u})=\boldsymbol{S}(x,\uvec{u}), \quad (x,t) \in \Omega\times \mathbb{R}^+_0,
\end{equation}
where $\Omega\subseteq \mathbb{R}$ is the space domain and the vector of the conserved variables, the flux and the source term are respectively defined as
\begin{equation}
	\uvec{u}:=\begin{pmatrix}
		H\\
		q
	\end{pmatrix},\quad\uvec{F}(\uvec{u}):=\begin{pmatrix}
		q\\
		\frac{q^2}{H}+g\frac{H^2}{2}
	\end{pmatrix},\quad
	\boldsymbol{S}(x,\uvec{u}):=-\begin{pmatrix}
		0\\
		gH\frac{\partial}{\partial x} B(x)+g\frac{n_M^2\vert q \vert}{H^{\frac{7}{3}}}q
	\end{pmatrix},
	\label{eq:sw}
\end{equation}
where $H$ is water height, $q:=Hv$ is the momentum of the flow, with $v$ being the water speed averaged in the vertical direction, $g$ is the gravitational constant, $B$ is the bathymetry (or bottom topography) and $n_M$ the Manning friction coefficient. Further, we introduce the total height $\eta:=H+B$ and the sound speed $c:=\sqrt{gH}$.

The Jacobian of the flux with respect to the conserved variables is given by
\begin{equation}
	\uvec{J}(\uvec{u}):=\frac{\partial \uvec{F}}{\partial \uvec{u}}=\begin{pmatrix}
		0 & 1\\
		-\frac{q^2}{H^2}+gH & 2\frac{q}{H}
	\end{pmatrix},
	\label{eq:jacobian}
\end{equation}
with the two real eigenvalues given by $\lambda_{1,2}=v\pm c$.

When no friction is present, the SW system is also endowed with an entropy pair $(s,F_s)$, with entropy $s$ and entropy flux $F_s$ respectively given by \cite{ranocha2017shallow,gassner2016well}
\begin{equation}\label{eq:ntrop}
	s:=\frac{1}{2}\frac{q^2}{H}+\frac{1}{2}gH^2+gHB\,,\;\;
	F_s := q\left( \frac{1}{2}\frac{q^2}{H^2}+g\eta\right),
\end{equation} 
with associated entropy variables
\begin{equation}
	\uvec{w}:=\left(g\eta-\frac{1}{2}\frac{q^2}{H^2}, \frac{q}{H} \right)^T\,.
	\label{eq:entropy_vars}
\end{equation}

Due to their relevance in many applications, the numerical resolution of the SW equations is a very active area of research, see \cite{xingsurvey,galland1991telemac,busto2022staggered,ciallella2022arbitrary,ciallella2023arbitrary,chertock2022well,cao2023flux,cao2022flux,kurganov2023well,gassner2016well,ranocha2017shallow,mPRKBL2,mantri2024fully,ricchiuto2015explicit,ricchiuto2009stabilized,abgrall2023new,ricchiuto2011c,ciallella2024high} and references therein for a non-exhaustive literature.

\subsection{Steady states}
The SW equations are well known to be characterized by nontrivial stationary solutions satisfying, in the weak sense, the ODE 
\begin{equation}
	\label{eq:steady_general}
	\frac{\partial}{\partial x} \boldsymbol{F}(\uvec{u})=\boldsymbol{S}(x,\uvec{u}).
\end{equation}
The simplest stationary solution is the so-called ``lake at rest'' steady state given by constant total height and zero velocity 
\begin{equation}
	\label{eq:lakeatrest}
	\eta=H+B\equiv \overline{\eta} \in \mathbb{R}^+_0, \quad v\equiv 0.
\end{equation}
When no friction is present, through basic analysis, from \eqref{eq:steady_general} one can show that smooth steady states are characterized by constant momentum and energy
\begin{equation}
	\label{eq:steady_no_friction}
	q(x,t)\equiv \overline{q} \in \mathbb{R}, \quad E=\frac{1}{2}\frac{\overline{q}^2}{H^2}+g(H+B)\equiv \overline{E}\in \mathbb{R}^+_0.
\end{equation}
When also the friction is present, one can easily prove \cite{macdonald1996analysis,macdonald1997analytic} that smooth steady solutions satisfy
\begin{equation}
	\label{eq:steady_with_friction}
	q(x,t)\equiv \overline{q} \in \mathbb{R}, \quad \left(-\frac{\overline{q}^2}{H^3}+g\right)\frac{\partial}{\partial x}H=-g\frac{\partial}{\partial x}B-g\frac{n_M^2\vert \overline{q} \vert}{H^{\frac{10}{3}}}\overline{q}.
\end{equation}
In general, steady states are not available in closed-form and are obtained by solving \eqref{eq:steady_general}. The interested reader is referred to \cite{delestre2013swashes}, in which a wide collection of analytical solutions (not only steady) is provided.

\section{Continuous Galerkin FEM and Residual Distribution}\label{chapRD}
We will introduce in this section the CG FEM for hyperbolic problems and show how such method can be put in an RD formalism. For more information, the interested reader is referred to \cite{RemiMarioRD}.

\subsection{CG}
We would like to numerically solve a hyperbolic system of balance laws in the form \eqref{eq:sys} over the bounded domain $\Omega:=(x_L,x_R)$ in the time interval $[0,T_f]$, with some initial and boundary conditions.
The main ingredients of the CG method are
\begin{itemize}
	\item[•] a tessellation $\tess$ of the space domain made by non-overlapping closed elements $K$, segments in this case as we consider a \RIcolor{one-dimensional} setting, covering its closure exactly;
	\item[•] the finite dimensional space $W_M:=\lbrace \varphi \in C^0(\overline{\Omega}) ~ : ~ \varphi \vert_K \in \mathbb{P}_M(K), ~ \forall K \in \tess\rbrace$ of the continuous functions $\varphi$ which are such that their restriction to each element $K$ of the tessellation is a polynomial of degree $M$;
	\item[•] a basis $\left\lbrace \varphi_i \right\rbrace_{i=1,\dots,I}$ of $W_M$ normalized in such a way that $\sum_{i=1}^I \varphi_i \equiv 1$ and which is such that each basis function $\varphi_i$ can be associated to a spatial node $x_i \in \overline{\Omega}$, usually referred to as ``degree of freedom'' (DoF). 
\end{itemize}
Further, the adopted bases are such that each basis function $\varphi_i$ has support in the union of the elements $K\in K_i$, with $K_i:=\left\lbrace K\in \tess~:~ x_i \in K \right\rbrace$ being the set of the elements containing the DoF $x_i$ to which the function is associated. Let us notice that the previous assumptions imply
\begin{equation}
	\sum_{x_i\in K} \varphi_i(x) \equiv 1, \quad \forall x\in K.
	\label{eq:local_normalization_bases}
\end{equation}
Examples of such bases, considered in the numerical tests, are the Bernstein polynomials and the Lagrange polynomials associated to equispaced nodes or to Gauss--Lobatto (GL) ones.

Finally, the CG method is given by a projection of the weak formulation in space of \eqref{eq:sys} over $W_M$, i.e., we look for an approximated solution $\uapp(x,t):=\sum_{i=1}^I \uvec{c}_i(t) \varphi_i(x)$, linear combination of the basis functions through unknown coefficients which depend on time, such that it satisfies the following system of equations
\begin{align}
	\int_\Omega \biggl( \frac{\partial}{\partial t}\uvec{u}_h + \textcolor{black}{\biggl[\frac{\partial}{\partial x}   \boldsymbol{F}(\uvec{u}_h) -\boldsymbol{S}(x,\uvec{u}_h) \biggr]_h} \biggr) \varphi_i(x)dx +\textcolor{black}{\uvec{ST}_i(\uvec{u}_h)} =\uvec{0},  \quad \forall i=1,\dots,I,
	\label{eq:CG}
\end{align}
where the term in square brackets is a consistent discretization of the spatial part of our initial PDE and $\uvec{ST}_i$ is a stabilization term introduced at the discrete level to prevent the instabilities of central schemes.

Equation \eqref{eq:CG} is the semidiscretization of the CG method and consists of a nonlinear system of ODEs in all the coefficients $\uvec{c}_i(t)$, collected in a single vector $\uvec{c}(t)$, characterized by a mass matrix which is big and sparse.
By numerically solving such system, one gets the evolution in time of the approximated solution $\uapp$.

Let us leave aside for one moment the problem of the time-stepping, which is not central in the context of this work, and let us observe that, if the discretization of the spatial part of the equation and the stabilization term in \eqref{eq:CG} are defined in such a way to be exactly zero for a particular steady state, then the resulting numerical scheme will be WB with respect to such steady state.
This will be the main topic of Section \ref{chapWB} but, before addressing the problem of well-balancing, we introduce here a short subsection, in which we show how the CG method can be easily embedded in an RD framework.

\subsection{RD and link with with CG}
We assume a classical CG FEM setting, i.e., a tessellation $\tess$ of the space domain, the space $W_M$ of continuous piecewise polynomial functions and a basis $\left\lbrace \varphi_i \right\rbrace_{i=1,\dots,I}$ of such space satisfying the properties previously mentioned. 
This allows us to consider the continuous approximation $\uapp(x,t):=\sum_{i=1}^I \uvec{c}_i(t) \varphi_i(x)$ of the analytical solution.
Then, the RD approach can be summarized in three main steps
\begin{itemize}
	\item[i)] \textbf{Definition of the element residuals}\\
	For each element $K$ of the tessellation $\tess$, we define the element residual 
	\begin{align}
		\elres:=\int_K \biggl( \frac{\partial}{\partial t}\uvec{u}_h + \textcolor{black}{\biggl[\frac{\partial}{\partial x}   \boldsymbol{F}(\uvec{u}_h) -\boldsymbol{S}(x,\uvec{u}_h) \biggr]_h} \biggr) dx,\quad \forall K\in \tess,
		\label{eq:element_res}
	\end{align}
	which represents an integral balance at the considered element;
	\item[ii)] \textbf{Definition of the node residuals}\\
	For each element $K$, we consider the DoFs belonging to it, $x_i \in K$, and define the node residuals $\noderes$ satisfying the following conservation relation
	\begin{equation}
		\sum_{x_i \in K} \noderes=\elres, \quad \forall K\in \tess,
		\label{eq:conservation}
	\end{equation}
	which corresponds to isolating the contribution of each DoF $x_i\in K$ to the integral balance introduced in the previous step;
	\item[iii)] \textbf{Imposition of the balance at the nodes}\\
	For each DoF $x_i$, we impose an equilibrium between all the node residuals $\noderes$ of the elements that contain that DoF
	\begin{equation}
		\label{eq:nodebalance}
		\sum_{K \in K_i} \noderes=\uvec{0},  \quad \forall i=1,\dots,I,
	\end{equation}
	where we recall that $K_i$ is the set of the elements of the tessellation containing the node $x_i$. This amounts to imposing that the global contribution of each node to all the balances of all the elements that share it is $0$, which is indeed a reasonable constraint: nothing is created or destroyed at the nodes.
\end{itemize}
Equation \eqref{eq:nodebalance} is a system of ODEs in the coefficients $\uvec{c}_i(t)$, which must be solved in time.
The recipe is quite general, as we did not specify how to choose the node residuals $\noderes$. In fact, under this point of view, there are plenty of possibilities and the properties of the resulting scheme depend on this choice. 
In particular, due to \eqref{eq:local_normalization_bases}, one can easily verify that the following definition of the node residuals
\begin{align}
	\noderes:=\int_K \biggl( \frac{\partial}{\partial t}\uvec{u}_h + \textcolor{black}{\biggl[\frac{\partial}{\partial x}   \boldsymbol{F}(\uvec{u}_h) -\boldsymbol{S}(x,\uvec{u}_h) \biggr]_h} \biggr) \varphi_i(x)dx +\textcolor{black}{\uvec{ST}_i^K(\uvec{u}_h)} =\uvec{0}  
	\label{eq:CG_RD}
\end{align}
fulfills the conservation relation \eqref{eq:conservation}, provided that the terms $\uvec{ST}_i^K(\uvec{u}_h)$ are defined in such a way that $\sum_{x_i \in K}\uvec{ST}_i^K(\uvec{u}_h)=\uvec{0}$. Moreover, the resulting RD scheme given by system \eqref{eq:nodebalance}, for such choice of the node residuals, is equivalent to the CG semidiscretization \eqref{eq:CG} if~$\sum_{K \in K_i}\uvec{ST}_i^K(\uvec{u}_h)=\uvec{ST}_i(\uvec{u}_h)$. 
The equivalence is essentially based on the fact that the support of the basis function $\varphi_i$ is in the union of the elements $K\in K_i$. 
The interested reader is referred to \cite{RemiMarioRD}, where several possible choices of the node residuals are presented and the link between RD and several classical approaches, e.g., discontinuous Galerkin and Finite Volume, are analyzed in depth.

\begin{remark}[Generalization to a multidimensional setting]\label{rmk:generalization_to_mulitd}
	Let us remark that the presented formulations, as well as the equivalence between them, extend in a natural way to a multidimensional unstructured framework. 
\end{remark}

\section{Well-balancing}\label{chapWB}
The evolution in time of the numerical solution $\uapp$ is given by the CG/RD formulation \eqref{eq:CG}, which is recalled here for clarity
\begin{align}
	\int_\Omega \biggl( \frac{\partial}{\partial t}\uvec{u}_h + \textcolor{black}{\biggl[\frac{\partial}{\partial x}   \boldsymbol{F}(\uvec{u}_h) -\boldsymbol{S}(x,\uvec{u}_h) \biggr]_h} \biggr) \varphi_i(x)dx +\textcolor{black}{\uvec{ST}_i(\uvec{u}_h)} =\uvec{0},  \quad \forall i=1,\dots,I.
	\label{eq:CG2}
\end{align}
As anticipated, if we are able to design the discretization of the spatial part of the equation and the stabilization term in such a way that they are exactly zero for a particular steady state, then, we will get an exact well-balancing with respect to such steady state for any general time-stepping method. The goal of this section is to do precisely this. 

Generally speaking, there are two main possibilities to achieve well-balancing:
\begin{itemize}
	\item[•] choosing a particular steady equilibrium and define ad hoc the mentioned ingredients of the scheme to be zero with respect to it;
	\item[•] introducing some general structures aiming at preserving \eqref{eq:steady_general} at the discrete level.
\end{itemize}
The second strategy is the most desirable since, as already specified, the analytical expression of the steady states is almost never known in closed-form.
In accordance with the first approach, all the WB elements that will be presented in this section are designed to be exactly zero with respect to the lake at rest steady state \eqref{eq:lakeatrest}; 
nevertheless, some of them address the problem of the preservation of general stationary solutions not known in closed-form.

We will start by presenting a basic non-WB reference framework and, afterwards, we will continue with the definition of some WB alternatives. 
In order to light the notation, in this section we drop the dependence on time, which is not central in this context, being clear that all the space discretizations are performed for a given $\uapp$ and a fixed time $t$.
\subsection{A reference non-well-balanced framework}\label{sec:ref_nonwb}
A consistent space discretization is given by a simple interpolation of the flux and the source onto the functional space $W_M$ 
\begin{align}
	\biggl[\frac{\partial}{\partial x}   \boldsymbol{F}(\uvec{u}_h) -\boldsymbol{S}(x,& \uvec{u}_h) \biggr]_h =\frac{\partial}{\partial x}\boldsymbol{F}_h-\boldsymbol{S}_h, \label{eq:non_wb_space_discretization1}\\
	\boldsymbol{F}_h:=\sum_{i=1}^I \boldsymbol{F}_i \varphi_i(x),& \quad  \boldsymbol{S}_h:=\sum_{i=1}^I \boldsymbol{S}_i \varphi_i(x),
	\label{eq:non_wb_space_discretization2}
\end{align}
with $\boldsymbol{F}_i$ and $\boldsymbol{S}_i$ interpolation coefficients, coinciding with the evaluations at the DoFs, respectively $\boldsymbol{F}(\uapp(x_i))$ and $\boldsymbol{S}(x_i,\uapp(x_i))$, if one assumes a Lagrange basis for $W_M$.

For what concerns the stabilization term, we adopt the CIP stabilization, firstly introduced in \cite{CIP} in an elliptic-parabolic setting by Douglas and Dupont and then applied to the hyperbolic framework in \cite{BURMAN} by Burman and Hansbo. Such stabilization is based on the introduction of a penalization term based on the jump of the normal derivatives of the numerical solution $\uapp$ across the faces of the tessellation, reading in general
\begin{equation}
	\uvec{ST}_i(\uapp):=\sum_{f\in\faces}\sum_{r=1}^R \alpha_{f,r} \int_f \Big\llbracket \nabla^r_{\uvec{\nu}_f} \varphi_i \Big\rrbracket  \Big\llbracket \nabla^r_{\uvec{\nu}_f} \uvec{u}_h \Big\rrbracket  d\sigma(\uvec{x}),\quad \alpha_{f,r}=\delta_r \bar{\rho}_f h_f^{2r},     
	\label{eq:CIP_multiD}
\end{equation}
where $\faces$ denotes the set of the faces shared by two elements of the tessellation, $\llbracket \cdot \rrbracket$ is the jump across the face $f$, $\nabla^r_{\uvec{\nu}_f}$ is the $r$-th partial derivative in the direction $\uvec{\nu}_f$ normal to the face $f$, $\bar{\rho}_f$ is a local reference value for the spectral radius of the normal Jacobian of the flux, $h_f$ is a reference characteristic size of the elements containing $f$ and $\delta_r$ are constant parameters to be tuned. The orientation of the normal $\uvec{\nu}_f$ and the direction of evaluation for the jump can be chosen freely.
Originally, only the jump of the first derivative was taken into account, the stabilization on higher order derivatives has been introduced in \cite{larson2020stabilization}.

Clearly, in a \RIcolor{one-dimensional} context, the faces between the elements are just points and the integrals reduce to point-evaluations. Hence, \eqref{eq:CIP_multiD} reduces to
\begin{equation}
	\uvec{ST}_i(\uapp):=\sum_{f\in\faces}\sum_{r=1}^R \alpha_{f,r} \Bigg\llbracket \frac{\partial^r}{\partial x^r} \varphi_i \Bigg\rrbracket  \Bigg\llbracket \frac{\partial^r}{\partial x^r} \uvec{u}_h \Bigg\rrbracket.
	\label{eq:CIP}
\end{equation}

The CG/RD formulation \eqref{eq:CG}, along with the space discretization \eqref{eq:non_wb_space_discretization1}-\eqref{eq:non_wb_space_discretization2} coupled with the jump stabilization \eqref{eq:CIP}, properly solved in time through a suitable ODE integrator, provides an arbitrary high order framework for the numerical solution of the PDE \eqref{eq:sys}.
Nevertheless, as no particular attention has been paid to design the space discretization and the stabilization in such a way to achieve well-balancing, the resulting formulation is not WB.

Actually, neither the space discretization nor the jump stabilization, taken individually, are zero with respect to any particular steady state.
In fact, the naive interpolation \eqref{eq:non_wb_space_discretization2} of the flux and the source leads to a natural mismatch preventing any possibility of well-balancing, as  $\frac{\partial}{\partial x}\boldsymbol{F}_h$ and $\boldsymbol{S}_h$ belong to two different polynomial spaces and their difference can be zero only in very trivial cases. Further, in the context of the lake a rest steady state,
the jump of the derivatives of $H_h$ across the interfaces, in the first component of \eqref{eq:CIP}, leads to a lack of well-balancing.

In the following, we will introduce some possible WB substitutes.
We conclude this section with some final remarks.


\begin{remark}
	The CIP stabilizations can be naturally put in an RD formalism, even in a general multidimensional setting, as shown in the next proposition.
\end{remark}
\begin{proposition}\label{prop:CIP_RD}
	Under the assumption of a conformal tessellation, if we define
	\begin{equation}
		\ST(\uapp):= \sum_{\substack{f\subset\partial{K}\\f\in \faces}}\sum_{r=1}^R \alpha_{f,r} \int_f  \nabla^r_{\uvec{\nu}_f}  \varphi_i\vert_K  \Big\llbracket \nabla^r_{\uvec{\nu}_f} \uvec{u}_h \Big\rrbracket   d\sigma(\uvec{x}),
		\label{eq:CIP_RD}
	\end{equation}
	where here the jump is evaluated from the inside of $K$ to the neighboring element $K'$ sharing $f$, $\llbracket z \rrbracket:=z\vert_K-z\vert_{K'}$, then we have that
	\begin{itemize}
		\item[•] $\sum_{\uvec{x}_i \in K}\uvec{ST}_i^K(\uvec{u}_h)=\uvec{0}$;
		\item[•] the stabilization term \eqref{eq:CIP_multiD} is given by $\sum_{K \in K_i}\uvec{ST}_i^K(\uvec{u}_h)=\uvec{ST}_i(\uvec{u}_h)$.
	\end{itemize}
\end{proposition}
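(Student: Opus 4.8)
The plan is to check the two bullet points separately: the first is a one-line consequence of the partition of unity, while the genuine work lies in the second, which is an orientation-bookkeeping argument over shared faces.

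For the conservation property $\sum_{x_i \in K}\uvec{ST}_i^K(\uvec{u}_h)=\uvec{0}$, I would fix the element $K$ and interchange the finite sum over the DoFs $x_i \in K$ with the (finite) sums over the faces $f\subset\partial K$ and the orders $r$. Since neither the coefficient $\alpha_{f,r}$ nor the jump $\llbracket \nabla^r_{\uvec{\nu}_f}\uvec{u}_h\rrbracket$ depends on $i$, everything collapses onto the factor $\sum_{x_i\in K}\nabla^r_{\uvec{\nu}_f}\varphi_i\vert_K$ evaluated on $f$. By the local normalization \eqref{eq:local_normalization_bases} we have $\sum_{x_i\in K}\varphi_i\equiv 1$ on $K$, hence $\sum_{x_i\in K}\nabla^r_{\uvec{\nu}_f}\varphi_i\vert_K=\nabla^r_{\uvec{\nu}_f}(1)=0$ for every $r\geq 1$. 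This annihilates each face--order contribution and proves the claim.

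For the reconstruction $\sum_{K\in K_i}\uvec{ST}_i^K(\uvec{u}_h)=\uvec{ST}_i(\uvec{u}_h)$, I would fix the DoF $x_i$ and again swap the order of summation so that the outer sum runs over faces $f\in\faces$. Here conformality is essential: each internal face is shared by exactly two elements $K,K'$ with no hanging nodes, so the two one-sided integrals live over the same geometric $f$ and can be added. I would then do a short case analysis on a fixed $f$. If both $K$ and $K'$ contain $x_i$, the combined expression contributes once through each element; fixing the orientation $\uvec{\nu}_f$ globally on $f$ and recalling that only the jump bracket is tied to the inner element, the $K'$-contribution carries a sign flip in $\llbracket \nabla^r_{\uvec{\nu}_f}\uvec{u}_h\rrbracket$, so the two terms assemble into $\alpha_{f,r}\int_f\bigl(\nabla^r_{\uvec{\nu}_f}\varphi_i\vert_K-\nabla^r_{\uvec{\nu}_f}\varphi_i\vert_{K'}\bigr)\llbracket\nabla^r_{\uvec{\nu}_f}\uvec{u}_h\rrbracket\,d\sigma$, which is exactly the face-$f$ term of \eqref{eq:CIP_multiD}. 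If only one of them, say $K$, contains $x_i$, the support property gives $\varphi_i\vert_{K'}\equiv 0$, so $\nabla^r_{\uvec{\nu}_f}\varphi_i\vert_{K'}=0$ and the single $K$-term already equals $\alpha_{f,r}\int_f\llbracket\nabla^r_{\uvec{\nu}_f}\varphi_i\rrbracket\llbracket\nabla^r_{\uvec{\nu}_f}\uvec{u}_h\rrbracket\,d\sigma$. Summing these per-face identities over all $f\in\faces$ recovers $\uvec{ST}_i(\uvec{u}_h)$.

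The delicate step — the one I would write out most carefully — is the sign and orientation bookkeeping in the two-element case: one must fix $\uvec{\nu}_f$ once per face, let only the jump bracket depend on the chosen inner element, and verify that the $(-1)$ produced by reversing the jump of $\uvec{u}_h$ attaches to $\varphi_i\vert_{K'}$ so as to build the symmetric test-function jump $\llbracket\nabla^r_{\uvec{\nu}_f}\varphi_i\rrbracket$. A reassuring consistency check is that both \eqref{eq:CIP_multiD} and the assembled RD expression are invariant under $\uvec{\nu}_f\mapsto-\uvec{\nu}_f$, since each bracket acquires a factor $(-1)^r$ and the product acquires $(-1)^{2r}=1$; this confirms the conventions are mutually compatible. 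The remaining manipulations, namely interchanging the finite sums and matching integrands face by face, are routine.
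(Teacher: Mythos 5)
Your proof is correct, and your argument for the first bullet is exactly the paper's: interchange the finite sums, collapse onto $\sum_{\uvec{x}_i\in K}\nabla^r_{\uvec{\nu}_f}\varphi_i\vert_K$, and invoke the partition of unity \eqref{eq:local_normalization_bases} so that the $r$-th derivative of a constant vanishes. For the second bullet, however, you take a genuinely different route. You fix the DoF $x_i$, reorganize the assembled sum $\sum_{K\in K_i}\uvec{ST}_i^K(\uvec{u}_h)$ face by face, and prove a per-face identity by case analysis: when both elements sharing $f$ contain $x_i$, the two one-sided contributions combine---because the jump of $\uvec{u}_h$ seen from $K'$ is the negative of the one seen from $K$---into $\alpha_{f,r}\int_f\llbracket\nabla^r_{\uvec{\nu}_f}\varphi_i\rrbracket\,\llbracket\nabla^r_{\uvec{\nu}_f}\uvec{u}_h\rrbracket\,d\sigma(\uvec{x})$; when only $K$ contains $x_i$, the support property gives $\nabla^r_{\uvec{\nu}_f}\varphi_i\vert_{K'}=0$ and the single term already equals the CIP face term. (The remaining case, where neither element contains $x_i$, contributes zero to both sides for the same reason; this deserves one explicit sentence for completeness.) The paper instead argues globally and in the opposite direction: it rewrites the CIP sum \eqref{eq:CIP_multiD} as a double sum over ordered pairs of distinct elements with a factor $\tfrac{1}{2}$ compensating the double counting of each face, expands the test-function jump, shows by renaming the indices $K\leftrightarrow K'$ that the two resulting halves coincide, and only at the very end restricts to $K\in K_i$ via the support of $\varphi_i$. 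The mathematical content---the face-local cancellation of one-sided terms into the symmetric jump-jump product---is the same, but your version makes the sign bookkeeping explicit where it matters and avoids the $\tfrac{1}{2}$ double-counting and index gymnastics, at the price of a case split, while the paper's version treats all faces uniformly with no cases. Your closing observation that both expressions are invariant under $\uvec{\nu}_f\mapsto-\uvec{\nu}_f$, since each bracket picks up $(-1)^r$ and the product $(-1)^{2r}=1$, is a correct justification of the paper's remark that the orientation and jump direction may be chosen freely, and is a point the paper asserts without spelling out.
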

In the previous proposition, the bold font has been used for the generic DoF $\uvec{x}_i\in K$ in order to emphasize the fact that the result holds in a general multidimensional setting. The proof can be found in Appendix \ref{app:CIP_RD}.

\begin{remark}[Arbitrary high order stabilizations]
	Not all the stabilizations allow to reach arbitrary high order. For example, the Lax-Friedrichs stabilization presented in \cite{abgrall2019high}, in the context of the local Lax–Friedrichs node residuals in an RD setting, is at most first order accurate.
\end{remark}

\subsection{Well-balanced space discretizations}
		
		We will introduce here two WB discretizations of the spatial part of Equation~\eqref{eq:sys} with respect to the lake at rest steady state \eqref{eq:lakeatrest}. While the second one is strongly based on the assumption of a \RIcolor{one-dimensional} framework, the first one can be easily generalized to a multidimensional setting. Before starting, it is useful to introduce here the following splitting of the flux and the source 
		\begin{alignat}{3}
			\uvec{F}&=\uvec{F}^{V}+\uvec{F}^{HS},\quad &\uvec{F}^{V}&:=
			\begin{pmatrix}
				q\\
				\frac{q^2}{H}\end{pmatrix}, \quad &\uvec{F}^{HS}&:= \begin{pmatrix}
				0\\
				g\frac{H^2}{2}
			\end{pmatrix},\label{eq:flux_VHS}\\ 
			\boldsymbol{S}&=\uvec{S}^{V}+\uvec{S}^{HS},\quad &\uvec{S}^{V}&:=-\begin{pmatrix}
				0\\
				g\frac{n_M^2\vert q \vert}{H^{\frac{7}{3}}}q
			\end{pmatrix}, \quad &\uvec{S}^{HS}&:=-\begin{pmatrix}
				0\\
				gH\frac{\partial}{\partial x} B\end{pmatrix},
			\label{eq:source_VHS}
		\end{alignat}
		where the superscripts ``$V$'' and ``$HS$'' are used in order to identify respectively the velocity and the hydrostatic parts.

		\subsubsection{WB-$\HS$}\label{sec:WBHS}
		This WB discretization, presented in \cite{ricchiuto2009stabilized} and here denoted by ``WB-$\HS$'', relies on a particular treatment of the terms  $\frac{\partial}{\partial x}\left(g\frac{H^2}{2}\right)$ and $gH\frac{\partial}{\partial x} B$. 
		Rather than simply interpolating $\boldsymbol{F}$ and $\boldsymbol{S}$, we consider the splitting \eqref{eq:flux_VHS}-\eqref{eq:source_VHS}.
		In the context of a lake at rest steady state, the velocity parts of the flux and of the source are identically zero as $q\equiv 0$, therefore, one can easily discretize such terms with a simple interpolation
		\begin{align}
			\boldsymbol{F}^V_h=\sum_{i=1}^I \boldsymbol{F}^V_i \varphi_i(x),& \quad  \boldsymbol{S}^V_h=\sum_{i=1}^I \boldsymbol{S}^V_i \varphi_i(x).
		\end{align}
		A WB treatment of the hydrostatic part is less trivial. The mentioned approach consists in interpolating separately the water height and the bathymetry, thus getting
		\begin{align}
			&\biggl[\frac{\partial}{\partial x}   \boldsymbol{F}^{HS} -\boldsymbol{S}^{HS} \biggr]_h=
			\begin{pmatrix}
				0\\
				gH_h \frac{\partial}{\partial x} H_h
			\end{pmatrix}+\begin{pmatrix}
				0\\
				gH_h\frac{\partial}{\partial x} B_h
			\end{pmatrix}=\begin{pmatrix}
				0\\
				gH_h\frac{\partial}{\partial x}(H_h+B_h)
			\end{pmatrix},
		\end{align}
		where, by linearity of the interpolation, $(H_h+B_h)=(H+B)_h=\eta_h$ which is constant in the context of the lake at rest steady state, leading to an exact well-balancing. To sum up, the final WB discretization reads
		\begin{align}
			\biggl[\frac{\partial}{\partial x}   \boldsymbol{F} -\boldsymbol{S} \biggr]_h:=\frac{\partial}{\partial x}\boldsymbol{F}^V_h-\boldsymbol{S}^V_h+\begin{pmatrix}
				0\\
				gH_h\frac{\partial}{\partial x}(H_h+B_h)
			\end{pmatrix},
			\label{eq:WBHS}
		\end{align}
		where the subscript $h$ at the right-hand side indicates a simple interpolation.
		\RIcolor{
			\begin{remark}[On the conservation property of WB-HS]\label{rmk:conservation_WBHS}
				Due to the fact that $\sum_{i=1}^I\varphi_i\equiv 1$, it holds that
				\begin{align}
					\begin{split}
						\sum_{i=1}^I\int_K \left(gH_h \frac{\partial}{\partial x} H_h \right)\varphi_i(x)dx  =\int_K gH_h \frac{\partial}{\partial x} H_hdx =\int_K \frac{\partial}{\partial x} \left(g\frac{H_h^2}{2}\right)dx.
					\end{split}
				\end{align}
				Conservation of discretization~\ref{eq:WBHS} is thus guaranteed, with respect to the usual definition adopted in the context of RD schemes \cite{RemiMarioRD,ricchiuto2010explicit,abgrall2023personal}, if all integrals are computed exactly.
				Nonetheless, thanks to the linearity of quadrature formulas, such property is not spoiled as long as the adopted quadrature formula is exact for polynomials of degree~$2M-1$. 
				This is always the case in our simulations. In fact, we consider an exact computation of the integrals in all cases but for Lagrange polynomials associated to GL nodes, for which the associated quadrature formula, exact up to degree~$2M-1$, is adopted.		
			\end{remark}
		}

		\subsubsection{WB-$\GF$}
		The global flux approach has been firstly introduced in \cite{gascon2001construction} and has already been employed in many works \cite{chertock2018well,ciallella2023arbitrary,mantri2024fully,chertock2022well,cao2023flux,cao2022flux,kurganov2023well}
		to design WB methods. 
		In particular, in \cite{abgrall2022hyperbolic}, it has been shown how the notion of global flux can be naturally embedded in RD formulations.

		The underlying idea is to define a new flux $\boldsymbol{G}$ keeping into account the source term in order to rephrase the original PDE \eqref{eq:sys} into an equivalent homogeneus formulation
		\begin{equation}
			\frac{\partial}{\partial t}\uvec{u}+\frac{\partial}{\partial x} \boldsymbol{G}=\uvec{0}.
		\end{equation}
		Despite being absolutely non-trivial in a multidimensional setting, in the \RIcolor{one-dimensional} case one can easily define the global flux through a simple integration of the source term
		\begin{equation}
			\uvec{R}:=-\int^x_{x_L} \boldsymbol{S}(s,\uvec{u}(s))ds,\quad \boldsymbol{G}=\boldsymbol{F}+\uvec{R}.
		\end{equation}
		
		At the discrete level, $\boldsymbol{G}_h$ is got by interpolation, providing an approximation at each DoF, and the simplest idea that one could have is to set for any $i$
		\begin{align}
			\boldsymbol{G}_h(x_i) :=\textcolor{black}{\boldsymbol{F}_h(x_i)}&+\textcolor{black}{\uvec{R}_h}(x_i), \\
			\textcolor{black}{\boldsymbol{F}_h(x_i)}:=\sum_{j=1}^I\uvec{F}_j\varphi_j(x_i), \quad &\textcolor{black}{\textcolor{black}{\uvec{R}_h}(x_i)}:=-\int^{x_i}_{x_L}  \sum_{j=1}^I\uvec{S}_j \varphi_j(s)ds,
		\end{align}
		with $\boldsymbol{F}_h$ being the interpolation of the flux and $\uvec{R}_h$ the integral of the interpolation of the source.
		Again, we remark that the dependence on time is dropped in order to light the notation.
		
		Unfortunately, despite this choice providing a consistent discretization of the spatial part of our PDE, 
		given by $\frac{\partial}{\partial x}\uvec{G}_h$,
		this formulation is not WB with respect to the lake at rest steady state ($\uvec{G}_h\neq \text{const}$), as no special care has been taken under this point of view. 
		In fact, in such a case, the flux and the source reduce to their hydrostatic part
		\begin{align}
			\uvec{F}=\uvec{F}^{HS}=\begin{pmatrix}
				0\\
				g\frac{H^2}{2}
			\end{pmatrix}&,\quad
			\boldsymbol{S}=\uvec{S}^{HS}=-\begin{pmatrix}
				0\\
				gH\frac{\partial}{\partial x} B
			\end{pmatrix},
			\label{eq:flux_source_lake_at_rest}
		\end{align}
		and there is no reason why the integral of the interpolation of the second component of the source should match the second component of the flux.
		A WB alternative is the one presented in \cite{xing2005high,ciallella2023arbitrary} and consists in adopting, in each element $K$, the following discretization for the second component of $\uvec{S}^{HS}$
		\begin{equation}
			\left[gH\frac{\partial}{\partial x} B\right]_h:=\left[ g (H_h+B_h) \frac{\partial}{\partial x} B_h \right]_h- \frac{\partial}{\partial x} \left[ \frac{gB^2}{2}\right]_h,\quad \forall x\in K, \quad \forall K \in \tess,
			\label{eq:WB_second_component_of_the_source_GF}
		\end{equation}
		where again the subscripts at the right-hand side stand for simple interpolations. 
		More formally, we can state the following proposition.
		\begin{proposition}\label{prop:G_lake_at_rest}
			By adopting the discretization \eqref{eq:WB_second_component_of_the_source_GF} for the second component of the hydrostatic part of the source and a simple interpolation of the velocity part, the resulting global flux got by interpolating its values at the DoFs 
			\begin{align}
				\boldsymbol{G}_h(x_i):=\boldsymbol{F}_h&(x_i)+\textcolor{black}{\uvec{R}_h}(x_i), \label{eq:Gh1}\\
				\boldsymbol{F}_h(x_i):=\sum_{j=1}^I\uvec{F}_j\varphi_j(x_i), \quad \textcolor{black}{\uvec{R}_h}(x_i):=&-\int^{x_i}_{x_L}\left[ -\begin{pmatrix}
					0\\
					\left[gH\frac{\partial}{\partial x} B\right]_h(s)
				\end{pmatrix}+  \sum_{j=1}^I\uvec{S}^{V}_j \varphi_j(s) \right] ds,\label{eq:Gh2}
			\end{align}
			is constant for a lake at rest steady state.
		\end{proposition}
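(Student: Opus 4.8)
The plan is to specialize the whole construction to a lake at rest state and to show that the two components of $\boldsymbol{G}_h$ are separately constant over the DoFs. First I would invoke \eqref{eq:lakeatrest}, so that $q\equiv 0$ and $\eta=H+B\equiv\overline{\eta}$; consequently the velocity parts of flux and source vanish, and by \eqref{eq:flux_source_lake_at_rest} only the hydrostatic contributions survive. The first component is then immediate: since $F^{(1)}=q\equiv 0$ and the source has vanishing first component, both $\boldsymbol{F}_h(x_i)$ and $\uvec{R}_h(x_i)$ have zero first component, so the first component of $\boldsymbol{G}_h$ is identically zero, hence constant. The entire content of the statement therefore lies in the second component.

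For the second component I would start from the WB source discretization \eqref{eq:WB_second_component_of_the_source_GF}. The key observation is that, by linearity of interpolation, $H_h+B_h=\eta_h\equiv\overline{\eta}$ is a genuine constant, so the first term reduces to $g\overline{\eta}\,\partial_x B_h$. I would then rewrite the full integrand of $\uvec{R}_h$, on each element $K$, as an exact spatial derivative
\[
\left[gH\frac{\partial}{\partial x}B\right]_h\big|_K=\frac{\partial}{\partial x}\left(g\overline{\eta}\,B_h-\left[\frac{gB^2}{2}\right]_h\right)\big|_K,
\]
using that the interpolation operator $[\cdot]_h$ reproduces element-wise polynomials of degree $\le M$ (so $[g\overline{\eta}\,\partial_x B_h]_h=g\overline{\eta}\,\partial_x B_h$ on each $K$, as $\partial_x B_h$ has degree $M-1$) and that $\partial_x[gB^2/2]_h$ is already a piecewise polynomial. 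Since the primitive $\Psi:=g\overline{\eta}\,B_h-[gB^2/2]_h$ lies in $W_M\subset C^0(\overline{\Omega})$, the integral defining the second component $R_h^{(2)}(x_i)$ telescopes across the traversed elements and collapses to the boundary values $\Psi(x_i)-\Psi(x_L)$.

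The conclusion then follows by evaluation at the DoFs. Writing $B_i:=B(x_i)$ and $H_i:=H_h(x_i)$, interpolation gives $B_h(x_i)=B_i$ and $[gB^2/2]_h(x_i)=gB_i^2/2$, while the Lagrange evaluation of the flux yields $F_h^{(2)}(x_i)=gH_i^2/2$. Inserting the lake at rest relation $H_i=\overline{\eta}-B_i$ and expanding $gH_i^2/2=g\overline{\eta}^2/2-g\overline{\eta}B_i+gB_i^2/2$, the DoF-dependent contributions $\pm g\overline{\eta}B_i$ and $\pm gB_i^2/2$ cancel between $F_h^{(2)}(x_i)$ and $R_h^{(2)}(x_i)$, leaving
\[
G_h^{(2)}(x_i)=g\frac{\overline{\eta}^2}{2}-g\overline{\eta}\,B(x_L)+g\frac{B(x_L)^2}{2},
\]
which is independent of $i$. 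Hence both components of $\boldsymbol{G}_h$ are constant over the DoFs, and the interpolated global flux is constant.

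I expect the main obstacle to be the careful bookkeeping of the interpolation operators $[\cdot]_h$: one must justify that $[\cdot]_h$ acts as the identity on element-wise polynomials of degree $\le M$ (which is exactly what turns the discretized source into an exact derivative) and that the primitive $\Psi$ is globally continuous, so that integrating its piecewise derivative from $x_L$ to a DoF $x_i$ genuinely telescopes rather than accumulating interface jumps of $\partial_x B_h$. Once these two points are secured, the remainder is the algebraic cancellation above.
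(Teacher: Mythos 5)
Your proposal is correct and follows essentially the same route as the paper's proof: the same reduction to the second component, the same crucial exactness argument ($[\partial_x B_h]_h=\partial_x B_h$ element-wise since $\partial_x B_h$ has degree $M-1$), the same use of continuity of $B_h$ across interfaces to make the integral telescope, and the same algebraic cancellation via $H=\overline{\eta}-B$. The only cosmetic differences are that you package the telescoping through an explicit continuous primitive $\Psi\in W_M$ and the fundamental theorem of calculus (where the paper computes on the first element and then invokes the cancellation across subsequent elements), and you perform the final cancellation with point values at the DoFs rather than by expanding $\left[g(\overline{\eta}-B)^2/2\right]_h$ through linearity of the interpolation operator.
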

		The proof can be found in Appendix \ref{app:WB_proof}. 
		Summarizing, the WB discretization based on the notion of global flux, here denoted as ``WB-$\GF$'', reads
		\begin{align}
			\biggl[\frac{\partial}{\partial x}   \boldsymbol{F} -\boldsymbol{S} \biggr]_h:=\frac{\partial}{\partial x}\boldsymbol{G}_h,
			\label{eq:WBGF}
		\end{align}
		with $\uvec{G}_h$ defined by interpolating its values at the DoFs given by \eqref{eq:Gh1}-\eqref{eq:Gh2}.
		
		\begin{remark}[Local interpolation]\label{rmk:local_interpolation}
			We remark that the discretization \eqref{eq:WB_second_component_of_the_source_GF} is meant to be performed separately in each element, as the term $\frac{\partial}{\partial x} B_h$ is in general discontinuous across the interfaces between the elements.
		\end{remark}
		
		\begin{remark}[Global flux quadrature]
			The global flux can be also interpreted as a suitable modification of the quadrature formula adopted in order to evaluate the source integral in \eqref{eq:CG}. For more information, the reader is referred to \cite{mantri2024fully}.
		\end{remark}
		
		\subsection{Well-balanced continuous interior penalty stabilizations}
		We start by recalling the original non-WB CIP stabilization \eqref{eq:CIP} in a \RIcolor{one-dimensional} setting
		\begin{align}
			\uvec{ST}_i(\uapp)&:=\sum_{f\in\faces}\sum_{r=1}^R \alpha_{f,r} \Bigg\llbracket \frac{\partial^r}{\partial x^r} \varphi_i \Bigg\rrbracket   \Bigg\llbracket \frac{\partial^r}{\partial x^r} \uvec{u}_h \Bigg\rrbracket\\
			&=\sum_{f\in\faces}\sum_{r=1}^R \alpha_{f,r} \Bigg\llbracket \frac{\partial^r}{\partial x^r} \varphi_i \Bigg\rrbracket   \Bigg\llbracket \frac{\partial^r}{\partial x^r} \begin{pmatrix}
				H_h\\
				q_h
			\end{pmatrix} \Bigg\rrbracket.  
			\label{eq:jc}
		\end{align}
		This stabilization is based on the jump of the derivatives of the conserved variables, this is why we will refer to it as ``\textbf{jc}''.
		
		We propose here some novel WB alternative CIP stabilizations. The main idea is to change the object of the stabilization in such a way to achieve well-balancing. In the following definitions the subscript $h$ denotes an interpolation.
		\begin{itemize}
			\item[•] \textbf{Total height (jt)}
			\begin{align}
				\uvec{ST}_i(\uapp):=\sum_{f\in\faces}\sum_{r=1}^R \alpha_{f,r} \Bigg\llbracket \frac{\partial^r}{\partial x^r} \varphi_i \Bigg\rrbracket   \Bigg\llbracket \frac{\partial^r}{\partial x^r} \begin{pmatrix}
					\eta_h\\
					q_h
				\end{pmatrix} \Bigg\rrbracket  , \quad \eta_h:=H_h+B_h.
				\label{eq:jt}
			\end{align}
			\item[•] \textbf{Entropy variables (je)}
			\begin{align}
				\uvec{ST}_i(\uapp):=\sum_{f\in\faces}\alpha_f  \Bigg\llbracket \frac{\partial}{\partial x} \varphi_i\Bigg\rrbracket \mathcal{A}_f  \Bigg\llbracket \frac{\partial}{\partial x} \uvec{w}_h \Bigg\rrbracket, \quad \uvec{w}:=\begin{pmatrix}
					g\eta-\frac{v^2}{2}\\
					v
				\end{pmatrix},
				\label{eq:je}
			\end{align}
			where $\mathcal{A}_f\in \mathbb{R}^{2\times 2}$ is a matrix which is used to make the stabilization dimensionally consistent. One possible choice for it, the one assumed here, is given by the Jacobian of the transformation from entropy to conserved variables 
			\begin{equation}
				\mathcal{A}_f:=\frac{\partial \uvec{u}}{\partial \uvec{w}}=\begin{pmatrix}
					\frac{1}{g} & \frac{v}{g}\\
					\frac{v}{g} & H+\frac{v^2}{g}
				\end{pmatrix},
			\end{equation}
			evaluated at the interface $f$ and computed assuming a flat bathymetry in such a way to have a proper bijective map between $\uvec{w}$ and $\uvec{u}$.
			
			
			\item[•] \textbf{Space residual (jr)}
			\begin{align}
				\uvec{ST}_i(\uapp):=\sum_{f\in\faces}\alpha_f  \Bigg\llbracket \uvec{J}  \frac{\partial}{\partial x} \varphi_i \Bigg\rrbracket \mathcal{B}_f \Bigg\llbracket \textcolor{black}{\uvec{J}\frac{\partial}{\partial x} \uvec{u}_h-\boldsymbol{S}^*} \Bigg\rrbracket, \quad \boldsymbol{S}^*:=-\begin{pmatrix}
					0\\
					gH_h \frac{\partial}{\partial x}B_h
				\end{pmatrix},
				\label{eq:jr}
			\end{align}
			where $\uvec{J}:=\frac{\partial \uvec{F}}{\partial \uvec{u}}$ is the Jacobian of the flux \eqref{eq:jacobian} at the interface $f$ and the matrix $\mathcal{B}_f\in \mathbb{R}^{2\times 2}$, just like $\mathcal{A}_f$, is used for consistency purposes. In this work, we assume $\mathcal{B}_f:=\vert \uvec{J} \vert^{-1}$ with $\vert \uvec{J} \vert$ absolute value of the Jacobian, defined as $|\uvec{J}|:=R\vert \Lambda \vert R^{-1}$, where $R$ is the matrix of the right eigenvectors of $\uvec{J}$ and $\vert \Lambda \vert$ a diagonal matrix having as entries the absolute values of the eigenvalues of $\uvec{J}$. More explicitly, for the sake of completeness, $R$ and $\vert \Lambda \vert$ are respectively given by
			\begin{equation}
				R:=\begin{pmatrix}
					1 & 1\\
					v-c & v+c
				\end{pmatrix}, \quad \vert \Lambda \vert:=\begin{pmatrix}
					\vert v-c \vert & 0\\
					0   & \vert v+c \vert
				\end{pmatrix}.
			\end{equation}

			

			
			\item[•] \textbf{Global flux (jg)}
			\begin{align}
				\uvec{ST}_i(\uapp):=\sum_{f\in\faces}\alpha_f   \Bigg\llbracket \uvec{J} \frac{\partial}{\partial x} \varphi_i\Bigg\rrbracket \mathcal{B}_f  \Bigg\llbracket \frac{\partial}{\partial x} \uvec{G}_h \Bigg\rrbracket,
				\label{eq:jg}
			\end{align}
			where again the presence of $\mathcal{B}_f:=\vert \uvec{J} \vert^{-1}$ allows to make the stabilization consistent with the other elements in \eqref{eq:CG}. 
		\end{itemize}
		The abbreviations are based on the objects of the stabilizations.
		Before going to the numerical results, we make some useful remarks.
		
		All the new CIP stabilizations are WB with respect to the lake at rest steady state, as shown in the next proposition. 
		\begin{proposition}
			The stabilization terms \eqref{eq:jt}, \eqref{eq:je}, \eqref{eq:jr} and \eqref{eq:jg} are exactly zero in the context of the lake at rest steady state \eqref{eq:lakeatrest}. 
		\end{proposition}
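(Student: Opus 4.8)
The plan is to exploit the common algebraic shape of all four stabilizations \eqref{eq:jt}, \eqref{eq:je}, \eqref{eq:jr}, \eqref{eq:jg}: each is a sum over faces $f\in\faces$ of a product in which a jump of a derivative of the test function $\varphi_i$ is multiplied, possibly through a consistency matrix $\mathcal{A}_f$ or $\mathcal{B}_f$, by the jump of a \emph{stabilized object}. Since that object-jump appears as a factor in every summand, it suffices to prove, face by face, that it vanishes at the lake at rest state \eqref{eq:lakeatrest}; the first factor and the consistency matrices are then irrelevant. The single ingredient I would use throughout is that interpolation onto $W_M$ is linear and reproduces constants: from $\sum_{i=1}^I\varphi_i\equiv 1$, see \eqref{eq:local_normalization_bases}, the interpolant of a constant is that same constant. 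At the lake at rest one has $q\equiv 0$ and $\eta=H+B\equiv\overline{\eta}$, hence $q_h\equiv 0$ and, by linearity, $\eta_h=H_h+B_h\equiv\overline{\eta}$ is the constant function; in particular every spatial derivative of $q_h$ and of $\eta_h$ vanishes identically on each element.

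For \textbf{jt} \eqref{eq:jt} and \textbf{je} \eqref{eq:je} the conclusion is then immediate. In \eqref{eq:jt} the stabilized object is $(\eta_h,q_h)^T$, whose two components are interpolants of globally constant functions and therefore have vanishing derivatives of every order, so all jumps $\llbracket \frac{\partial^r}{\partial x^r}(\eta_h,q_h)^T\rrbracket$ are zero. For \eqref{eq:je} I would note that at the lake at rest the entropy variables reduce to $\uvec{w}=(g\eta-\frac{v^2}{2},v)^T=(g\overline{\eta},0)^T$, a constant vector, so $\uvec{w}_h$ is constant, $\frac{\partial}{\partial x}\uvec{w}_h\equiv 0$, and the jump factor kills the whole term regardless of $\mathcal{A}_f$.

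The only genuinely non-trivial case is \textbf{jr} \eqref{eq:jr}, where the stabilized object $\uvec{J}\frac{\partial}{\partial x}\uvec{u}_h-\boldsymbol{S}^*$ is not manifestly constant and must be shown to cancel. Here I would use $q_h\equiv 0$ to evaluate the Jacobian \eqref{eq:jacobian} at the interface as $\uvec{J}=\left(\begin{smallmatrix}0&1\\ gH_f&0\end{smallmatrix}\right)$, with $H_f$ the interface value of $H_h$, which is well defined since $H_h$ is continuous; this gives $\uvec{J}\frac{\partial}{\partial x}\uvec{u}_h=(0,\,gH_f\,\frac{\partial}{\partial x}H_h)^T$. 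Combining this with $\boldsymbol{S}^*=-(0,\,gH_h\,\frac{\partial}{\partial x}B_h)^T$ and using the element-local identity $\frac{\partial}{\partial x}H_h=-\frac{\partial}{\partial x}B_h$, which follows from $\eta_h=H_h+B_h\equiv\overline{\eta}$ on each element, the second component of the stabilized object reduces on either side of $f$ to $gH_f\,\frac{\partial}{\partial x}(H_h+B_h)=gH_f\,\frac{\partial}{\partial x}\eta_h=0$. Hence the object vanishes one-sidedly on both elements sharing $f$, its jump is zero, and the term vanishes. I expect this cancellation to be the main obstacle, precisely because it requires matching the interface value used in $\uvec{J}$ and in $H_h$ with the element-wise balance $\frac{\partial}{\partial x}H_h=-\frac{\partial}{\partial x}B_h$, rather than a bare reproduction-of-constants argument.

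Finally, for \textbf{jg} \eqref{eq:jg} the stabilized object is $\frac{\partial}{\partial x}\uvec{G}_h$, and I would simply invoke Proposition \ref{prop:G_lake_at_rest}, by which $\uvec{G}_h$ is constant at the lake at rest; thus $\frac{\partial}{\partial x}\uvec{G}_h\equiv 0$ and all jumps vanish. Collecting the four cases shows that \eqref{eq:jt}, \eqref{eq:je}, \eqref{eq:jr} and \eqref{eq:jg} are identically zero at the lake at rest steady state \eqref{eq:lakeatrest}, as claimed.
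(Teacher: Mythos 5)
Your proposal is correct and follows essentially the same route as the paper's proof: constancy of $q_h$, $\eta_h$ and $\uvec{w}_h$ disposes of jt and je, the jr case is settled by the same direct computation showing $\uvec{J}\frac{\partial}{\partial x}\uvec{u}_h-\boldsymbol{S}^*$ reduces to $\bigl(0,\, gH_h\frac{\partial}{\partial x}(H_h+B_h)\bigr)^T=\uvec{0}$ since $\eta_h\equiv\overline{\eta}$, and jg follows from Proposition \ref{prop:G_lake_at_rest}. Your extra care about evaluating $\uvec{J}$ at the interface value $H_f$ (well defined by continuity of $H_h$) is a slightly more explicit rendering of what the paper does implicitly, but it is the same argument.
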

		\begin{proof}
			The claim is trivial for jt \eqref{eq:jt} and je \eqref{eq:je}: the terms dependent on $q$ and $v$ cancel and we are left with the terms $\overline{\eta}$ and $g\overline{\eta}$, which, in such a case, are constant and, therefore, have zero derivatives and related jump. Clearly, the same holds for jg \eqref{eq:jg} because, as shown in Proposition \ref{prop:G_lake_at_rest}, the global flux $\uvec{G}_h$ has been specifically designed to be constant for a lake at rest steady state. For what concerns instead jr \eqref{eq:jr}, we have that the argument of the second jump, $\uvec{J}\frac{\partial}{\partial x} \uvec{u}_h-\boldsymbol{S}^*$, for a lake at rest steady state reduces to
			\begin{equation}
				\uvec{J}\frac{\partial}{\partial x} \uvec{u}_h-\boldsymbol{S}^*=\begin{pmatrix}
					0 & 1\\
					gH_h & 0
				\end{pmatrix}\begin{pmatrix}
					\frac{\partial}{\partial x}H_h\\
					0
				\end{pmatrix}+\begin{pmatrix}
					0\\
					gH_h \frac{\partial}{\partial x}B_h
				\end{pmatrix}=\begin{pmatrix}
					0\\
					gH_h\frac{\partial}{\partial x}\left( H_h+B_h\right)
				\end{pmatrix},
			\end{equation}
			which is indeed zero because $(H_h+B_h)=\eta_h\equiv \overline{\eta}$ is constant.
			In practice, the definition of $\uvec{S}^*$ is given in such a way to mimic the trick of the first WB space discretization, WB-$\HS$, presented in Section \ref{sec:WBHS}.
		\end{proof}

		The last two stabilizations, \eqref{eq:jr} and \eqref{eq:jg}, are particularly interesting, as in such cases the stabilization is based on discretizations of $\frac{\partial}{\partial x} \uvec{F}-\uvec{S}$, a quantity which is supposed to be zero in the context of a general steady state, not only in the context of the lake at rest.  
		In fact, as we are going to see in the numerical experiments, they have special properties in terms of superconvergence and capturing of small perturbations of general stationary solutions.
		
		\begin{remark}[About non-differential terms]
			One could wonder why, in the context of jr \eqref{eq:jr}, the term $\boldsymbol{S}^*$ seems to take into account only the hydrostatic part of the source $\uvec{S}^{HS}$, defined in \eqref{eq:source_VHS}, and not $\uvec{S}^{V}$. 
			The point is that the remaining velocity part $\uvec{S}^{V}$ has no differential terms and, thus, it would cancel due to the assumption of a continuous representation of the numerical solution.
			Under this point of view, it is worth underlying that the terms $\frac{\partial \uvec{u}}{\partial \uvec{w}}$, $\uvec{J}$ and $\vert\uvec{J}\vert^{-1}$ are non-differential and therefore well-defined at each interface $f$. 
		\end{remark}

		\begin{remark}[Another possible choice for $\mathcal{B}_f$]
			As already pointed out, the matrices $\mathcal{A}_f$ and $\mathcal{B}_f$ are used in order to achieve dimensional consistency. Other choices, with respect to the ones presented, are possible.
			In particular, another valid option for $\mathcal{B}_f$ is given by $\rho_f^{-1}\mathbb{I}$, with $\rho_f$ being the spectral radius of $\uvec{J}$ at the interface $f$ and $\mathbb{I}\in \mathbb{R}^{2\times 2}$ the identity matrix.
			The numerical results got with the two definitions of $\mathcal{B}_f$ are qualitatively similar but slightly better with $\mathcal{B}_f:=\vert \uvec{J}\vert^{-1}$. Therefore, for the sake of compactness, we will only present the ones obtained for such definition.
		\end{remark}

		\begin{remark}
			The corresponding terms $\ST$ for the definitions of the new stabilizations in an RD setting are respectively given by
			\begin{align}
				\ST(\uapp)&:=\sum_{\substack{f\subset\partial{K}\\f\in \faces}}\sum_{r=1}^R \alpha_{f,r}  \frac{\partial^r}{\partial x^r} \restrict{\varphi_i}{K}   \Bigg\llbracket \frac{\partial^r}{\partial x^r} \begin{pmatrix}
					\eta_h\\
					q_h
				\end{pmatrix} \Bigg\rrbracket,\\
				\ST(\uapp)&:=\sum_{\substack{f\subset\partial{K}\\f\in \faces}}\alpha_f   \frac{\partial}{\partial x} \restrict{\varphi_i}{K} \mathcal{A}_f  \Bigg\llbracket \frac{\partial}{\partial x} \uvec{w}_h \Bigg\rrbracket,\\
				\ST(\uapp)&:=\sum_{\substack{f\subset\partial{K}\\f\in \faces}}\alpha_f  \uvec{J}  \frac{\partial}{\partial x} \restrict{\varphi_i}{K} \mathcal{B}_f \Bigg\llbracket \textcolor{black}{\uvec{J}\frac{\partial}{\partial x} \uvec{u}_h-\boldsymbol{S}^*} \Bigg\rrbracket,\\
				\ST(\uapp)&:=\sum_{\substack{f\subset\partial{K}\\f\in \faces}}\alpha_f    \uvec{J} \frac{\partial}{\partial x} \restrict{\varphi_i}{K} \mathcal{B}_f  \Bigg\llbracket \frac{\partial}{\partial x} \uvec{G}_h \Bigg\rrbracket,
			\end{align}
			where the convention on the direction of evaluation of the jump is the one adopted in the context of Proposition \ref{prop:CIP_RD}, $\llbracket z \rrbracket:=z\vert_K-z\vert_{K'}$.
		\end{remark}

		\begin{remark}[Extension to a multidimensional setting]
			Apart from jg \eqref{eq:jg} which requires the notion of global flux and, hence, is strictly related to a \RIcolor{one-dimensional} setting, the other jump stabilizations can be easily generalized to a multidimensional context.
		\end{remark}
		\RIcolor{%
			\begin{remark}[On handling shocks]
				CIP stabilizations are not suitable to handle shocks.
				For this purpose, other elements should be introduced, such as nonlinear limiting or blendings with diffusive low order stabilizations.
				However, this is out of the scope of this work and, hence, tests with shocks have been omitted.		
			\end{remark}
		}

		\section{Deferred Correction}\label{chapDECRD}
		Aiming at an arbitrary high order framework, once the discretization in space has been fixed, we need to select a suitable arbitrary high order time integration technique for the numerical resolution of the CG/RD semidiscretization \eqref{eq:CG}.
		For this purpose, we adopt a DeC time discretization.
		
		Originally introduced in \cite{fox1949some}, the DeC approach has been extensively developed over the years. Several formulations have been proposed \cite{Decoriginal,minion2003semi,Decremi}, with applications to many different fields, for example \cite{micalizzi2023efficient,ciallella2022arbitrary,veiga2024improving,mPDeC,abgrall2021relaxation,han2021dec,abgrall2020high}, 
		ranging from adaptivity to structure preservation.

		In particular, here we consider the bDeCu method, presented in \cite{micalizzi2023new} as an efficient modification, based on ideas introduced by Minion in \cite{minion2003semi}, of the DeC formulation presented in \cite{Decremi}.
		The advantage of such formulation is that it allows to get rid of the burden associated with the big and sparse mass matrix, typical of CG/RD discretizations.
		In particular, its size and sparse structure make its inversion unfeasible in concrete applications, determining, in general, high computational costs related to the resolution, at each time iteration, of several linear systems for standard time integration methods. 
		Moreover, the adoption of some particular stabilization terms $\uvec{ST}_i(\uapp)$, dependent on the time derivative of the approximated solution, may determine contributions to the mass matrix, leading to a new solution-dependent mass matrix, which implies heavy complications.
		In fact, the mass matrix should be recomputed multiple times at each time iteration and, in general, no warranties hold concerning its invertibility (and so concerning the well--posedness of the resulting method), see for example \cite{abgrall2020multidimensional} in a Lagrangian framework.
		All these problems do not exist with the adopted approach.

		We will introduce now the DeC formulation for hyperbolic problems defined in \cite{Decremi} and, afterwards, we will describe the efficient modification introduced in \cite{micalizzi2023new}.
		The idea behind the approach is based on having two operators dependent on a same discretization parameter $\Delta$ and associated to different discretizations of the same problem, $\lopd^1,\lopd^2:X \longrightarrow Y$, with $X$ and $Y$ normed vector spaces.
		The operator $\lopd^2$ corresponds to a high order and implicit discretization and is, hence, difficult to solve, while, $\lopd^1$ corresponds to a low order and explicit discretization and, in particular, we assume that it is easy to solve problems of the type $\lopd^1(\undu)=\undz$ for $\undz\in Y$ given.
		Due to its simplicity, we would prefer solving $\lopd^1$, rather than $\lopd^2$, however, the solution to such operator would not be accurate enough.
		Under some assumptions on the operators, we can consider $\underline{\uvec{u}}^{(p)}$ given by the following iterative procedure
		\begin{equation}
			\label{eq:DeC}
			\lopd^1(\underline{\uvec{u}}^{(p)})=\lopd^1(\underline{\uvec{u}}^{(p-1)})-\lopd^2(\underline{\uvec{u}}^{(p-1)}), \quad p\geq 1,
		\end{equation}
		which is subjected to the following accuracy estimate with respect to the solution $\usol$ to the high order operator $\lopdt^2$
		\begin{equation}
			\label{eq:accuracyestimate}
			\norm{\underline{\uvec{u}}^{(p)}-\usol}_X \leq \left( \Delta \frac{\alpha_2}{\alpha_1} \right)^p\norm{\underline{\uvec{u}}^{(0)}-\usol}_X.
		\end{equation}
		Let us observe that the updating formula \eqref{eq:DeC} is explicit as a result of the assumptions made on the operator $\lopd^1$. Moreover, thanks to the accuracy estimate \eqref{eq:accuracyestimate}, the convergence to the solution $\usol$ of the operator $\lopd^2$ is ensured for $\Delta t$ small enough, independently of the chosen $\undu^{(0)}$, and the number of iterations to achieve a given accuracy with respect to it is controlled.

		In our case, the reference problem is the semidiscrete formulation \eqref{eq:CG}, which can be rephrased more compactly as
		\begin{align}
			\sum_{j=1}^I \left(\int_\Omega \varphi_i \varphi_j dx\right)\frac{d}{d t} \uvec{c}_j(t)+\spacestuff(\uvec{c}(t))=\uvec{0},    \quad \forall i=1,\dots,I,
			\label{eq:CG_compact}
		\end{align}
		with $\spacestuff(\uvec{c}(t))$ containing the terms not related to the time derivative.
		As in the context of a classical one-step method, we assume to know an approximation $\uvec{c}_n\approx\uvec{c}(t_n)$ of the solution to system \eqref{eq:CG} at the generic time $t_n$ and we look for an approximation $\uvec{c}_{n+1}\approx\uvec{c}(t_{n+1})$ at $t_{n+1}:=t_{n}+\Delta t$. 
		
		In order to define the operators $\lopd^1$ and $\lopd^2$, we introduce $M+1$ equispaced subtimenodes in the interval $[t_n,t_{n+1}]$, such that $t_n=t^0<t^1<\dots<t^M=t_{n+1}$. Then, the operator $\lopdt^2:\mathbb{R}^{(I \times Q \times M)}\rightarrow \mathbb{R}^{(I \times Q \times M)}$ is given by
		\begin{align}
			\lopdt^2(\cund)&=\left(\mathcal{L}^2_{\Delta,1}(\cund),\mathcal{L}^2_{\Delta,2}(\cund),\dots,\mathcal{L}^2_{\Delta,I}(\cund)\right) ,
		\end{align}
		\begin{align}
			\lopdi^2(\cund)&=\begin{pmatrix}
				\sum_{j=1}^I \left(\int_\Omega \varphi_i \varphi_j d x\right)\left( \uvec{c}_j^1-\uvec{c}_j^0\right)+\Delta t \sum_{\ell=0}^{M} \theta^1_\ell\spacestuff(\uvec{c}^\ell)\\
				\vdots\\
				\sum_{j=1}^I \left(\int_\Omega \varphi_i \varphi_j d x\right)\left( \uvec{c}_j^M-\uvec{c}_j^0\right)+\Delta t \sum_{\ell=0}^{M} \theta^M_\ell\spacestuff(\uvec{c}^\ell)\\
			\end{pmatrix}, \quad \cund=\begin{pmatrix}
				\uvec{c}^1\\
				\vdots\\
				\uvec{c}^M
			\end{pmatrix},
		\end{align}
		with $\uvec{c}_j^0$ known components of $\uvec{c}^0:=\uvec{c}_n$ and $Q$ number of scalar equations in the original hyperbolic system, $2$ in this case. Its definition is based on replacing the function $\spacestuff(\uvec{c}(t))$ by a high order interpolation in time with the Lagrange polynomials $\psi^m$ associated to the subtimenodes $t^m$.
		The generic coefficient $\theta^m_\ell$ is, in fact, the normalized integral of the function $\psi^m$ over $[t^0,t^m]$. 
		The solution $\underline{\uvec{c}}_\Delta$, such that $\lopd^2(\underline{\uvec{c}}_\Delta)=\uvec{0}$, contains $M$ components $\uvec{c}_\Delta^m$ $m=1,\dots,M$, which are $(M+1)$-th order accurate approximations of the exact solution $\uvec{c}(t^m).$ Trying to solve such operator directly is indeed very complicated as it amounts to solving a huge nonlinear system of algebraic equations.
		
		The low order explicit operator $\lopdt^1:\mathbb{R}^{(I \times Q \times M)}\rightarrow \mathbb{R}^{(I \times Q \times M)}$ is, instead, given by
		\begin{equation}
			\lopdt^1(\cund)=\left(\mathcal{L}^1_{\Delta,1}(\cund),\mathcal{L}^1_{\Delta,2}(\cund),\dots,\mathcal{L}^1_{\Delta,I}(\cund)\right), 
			\label{l1DeC}
		\end{equation}
		\begin{equation}
			\lopdi^1(\cund)=\begin{pmatrix}
				C_i\left( \uvec{c}_i^1-\uvec{c}_i^0\right)+\Delta t \beta^1\spacestuff(\uvec{c}^0)\\
				\vdots\\
				C_i\left( \uvec{c}_i^M-\uvec{c}_i^0\right)+\Delta t \beta^M\spacestuff(\uvec{c}^0)\\
			\end{pmatrix}, \quad \cund=\begin{pmatrix}
				\uvec{c}^1\\
				\vdots\\
				\uvec{c}^M
			\end{pmatrix},
			\label{l1iDeC}
		\end{equation}
		with $\beta^m:=\frac{t^m-t^0}{\Delta t}$ and $C_i:=\int_\Omega \varphi_i dx$. Such definition is based on an Euler approximation in time and a first order mass lumping in space. The components of the solution to $\lopd^1$ are, in fact, first order accurate approximations of the exact solution to \eqref{eq:CG} in the subtimendodes $t^m$ $m=1,\dots,M$.
		
		By a direct computation, the updating formula \eqref{eq:DeC}, in this case reduces to 
		{\small
			\begin{equation}
				\left(
				\begin{array}{ccc}
					\vdots\\
					\uvec{c}_i^{m,(p)}\\
					\vdots\\
				\end{array}
				\right)=\left(
				\begin{array}{ccc}
					\vdots\\
					\uvec{c}_i^{m,(p-1)}\\
					\vdots\\
				\end{array}
				\right)-\frac{1}{C_i}\begin{pmatrix}
					\vdots\\
					\sum_{j=1}^I \left(\int_K \varphi_i \varphi_j d x\right)\left( \uvec{c}_j^{m,(p-1)}-\uvec{c}_j^0\right)+\Delta t \sum_{\ell=0}^{M} \theta^m_\ell\spacestuff(\uvec{c}^{\ell,(p-1)})\\
					\vdots\\
				\end{pmatrix},
				\label{eq:generalupdating}
		\end{equation}}
		for any $i$. 
		
		The vectors $\uvec{c}_i^{m,(p)}$ are the components of $\uvec{c}^{m,(p)}$ which is, itself, a component of $\underline{\uvec{c}}^{(p)}$, the output vector at the $p$-th iteration. 
		For what concerns the initial vector $\underline{\uvec{c}}^{(0)}$, the most reasonable choice is to set for any subtimenode $\underline{\uvec{c}}^{m,(0)}=\uvec{c}^0:=\uvec{c}_n$.
		The updating does not involve the solution at the subtimenode $t^0$, therefore, we also have $\underline{\uvec{c}}^{0,(p)}=\uvec{c}^0:=\uvec{c}_n$ for any $p$. 
		In the end, we set $\uvec{c}_{n+1}:=\uvec{c}^{M,(P)}$, where $P$ is the final number of iterations performed. 
		The optimal number of iterations is given by $P:=M+1$, as the accuracy of the approximation with respect to the solution of $\lopd^2$ increases by one at each iteration but we are not interested in approximating it with accuracy higher than the one of the underlying discretization.

		One can see that the well-posedness of the explicit update \eqref{eq:generalupdating} is strongly related to the fact that the coefficients $C_i$ in \eqref{l1iDeC} are not zero. This is not always the case for any choice of the polynomial basis. 
		A safe option is given by the Bernstein polynomials, for which we always have $C_i>0$ $\forall i$. Another possibility, particularly convenient, is to choose a basis of Lagrange polynomials associated to points defining a quadrature formula sufficiently accurate to guarantee a high order mass lumping, e.g., the GL points, and to adopt such quadrature for the integrals. 
		
		The modification presented in \cite{micalizzi2023new} consists in introducing interpolation processes between the iterations to increase the number of subtimenodes according to the order of accuracy achieved in the specific iterations. In particular, at the iteration $p$, the bDeCu method involves an interpolation in time of $\underline{\uvec{c}}^{(p-1)}$, associated to $p$ subtimenodes $t^{m,(p-1)}$, to get $\underline{\uvec{c}}^{*(p-1)}$, associated to $p+1$ subtimenodes $t^{m,(p)}$. Such vector is then used to perform the iteration via \eqref{eq:generalupdating}. 
		The updating formula stays formally identical, up to the fact that the coefficients $\theta^m_\ell$, at each iteration $p$, are the ones associated to the considered subtimenodes $t^{m,(p)}$. For efficiency reasons, the interpolation is not performed at the first and at the last iteration.
		Useful sketches of the original method and of the modification are shown in Figure \ref{fig:sketchmethods}, in particular, the crosses indicate the location in time of the quantities of interest. 
		For further details, the reader is referred to \cite{micalizzi2023new}.
		
		\begin{figure}
			\begin{subfigure}{0.4\textwidth}
				\begin{tikzpicture}[scale=0.85, every node/.style={transform shape}]
					\tikzset{dot/.style={fill=black,circle}}
					\tikzset{cross/.style={cross out, draw, 
							minimum size=6pt, 
							inner sep=0pt, outer sep=0pt}}
					\newcommand{\YQ}{0};
					\newcommand{\YW}{0.6667};
					\newcommand{\YE}{1.3333};
					\newcommand{\YR}{2};
					\newcommand{\YT}{2.6667};
					\newcommand{\YY}{3.3333};
					\newcommand{\YU}{4};
					\newcommand{\YM}{-1.5};
					\newcommand{\YD}{-0.8};
					\newcommand{\YP}{4.5};
					\newcommand{\YZ}{-0.2};
					\newcommand{\SH}{0.25};
					\newcommand{\OT}{0.6};

					\newcommand\XA{-1.2};
					
					\draw (\XA,\YQ) -- (\XA,\YU);
					\draw (\XA cm-2pt,\YQ) node[anchor=east]{$t_n+\dt$} -- (\XA cm+2pt,\YQ);
					\draw (\XA cm-2pt,\YU) node[anchor=east]{$t_n$} -- (\XA cm+2pt,\YU);

					\renewcommand\XA{-0.5};
					\draw (\XA-\SH,\YD) node[anchor=west] {$\underline{\uvec{c}}^{(0)}$};

					\draw (\XA,\YQ) -- (\XA,\YW);
					\draw[dashed] (\XA,\YW) -- (\XA, \YY);
					\draw (\XA,\YY) -- (\XA,\YU);
					
					\fill (\XA,\YQ) node[cross] {};		
					\fill (\XA,\YU)  node[cross] {};
					\fill (\XA,\YW)  node[cross] {};
					\fill (\XA,\YY)  node[cross] {};
					
					\path (\XA,\YZ) edge[->,>=stealth',bend right] node[anchor=north] {} (0.5,\YZ);
					
					\renewcommand\XA{0.5};
					
					\draw (\XA-\SH,\YD) node[anchor=west] {$\underline{\uvec{c}}^{(1)}$};
					
					\draw (\XA,\YQ) -- (\XA,\YW);
					\draw[dashed] (\XA,\YW) -- (\XA, \YY);
					\draw (\XA,\YY) -- (\XA,\YU);
					\fill (\XA,\YQ)  node[cross] {};		
					\fill (\XA,\YU)  node[cross] {};
					\fill (\XA,\YW)  node[cross] {};
					\fill (\XA,\YY)  node[cross] {};

					\path (\XA,\YZ) edge[->,>=stealth',bend right] node[anchor=north] {} (1.5,\YZ);
					
					\renewcommand\XA{1.5};
					\draw (\XA-\SH,\YD) node[anchor=west] {$\underline{\uvec{c}}^{(2)}$};
					
					\draw (\XA,\YQ) -- (\XA,\YW);
					\draw[dashed] (\XA,\YW) -- (\XA, \YY);
					\draw (\XA,\YY) -- (\XA,\YU);
					
					\fill (\XA,\YQ)  node[cross] {};		
					\fill (\XA,\YU)  node[cross] {};
					\fill (\XA,\YW)  node[cross] {};
					\fill (\XA,\YY)  node[cross] {};
					
					\draw[dotted] (1.75,2) --(2.5,2);
					
					\renewcommand\XA{2.75};
					\draw (\XA-\SH,\YD) node[anchor=west] {$\underline{\uvec{c}}^{(M)}$};
					
					\draw (\XA,\YQ) -- (\XA,\YW);
					\draw[dashed] (\XA,\YW) -- (\XA, \YY);
					\draw (\XA, \YY) -- (\XA,\YU);
					\fill (\XA,\YQ)  node[cross] {};		
					\fill (\XA,\YU)  node[cross] {};
					\fill (\XA,\YW)  node[cross] {};
					\fill (\XA,\YY)  node[cross] {};

					\path (\XA,\YZ) edge[->,>=stealth',bend right] node[anchor=north] {} (3.75,\YZ);
					
					\renewcommand\XA{3.75};
					\draw (\XA-\SH,\YD) node[anchor=west] {$\underline{\uvec{c}}^{(M+1)}$};

					\draw (\XA,\YQ) -- (\XA,\YW);
					\draw[dashed] (\XA,\YW) -- (\XA, \YY);
					\draw (\XA,\YY) -- (\XA,\YU);
					\fill (\XA,\YQ)  node[cross] {};		
					\fill (\XA,\YU)  node[cross] {};
					\fill (\XA,\YW)  node[cross] {};
					\fill (\XA,\YY)  node[cross] {};

				\end{tikzpicture}
			\end{subfigure}
			\begin{subfigure}{0.4\textwidth}
				\begin{tikzpicture}[scale=0.85, every node/.style={transform shape}]
					\tikzset{dot/.style={fill=black,circle}}
					\tikzset{cross/.style={cross out, draw, 
							minimum size=6pt, 
							inner sep=0pt, outer sep=0pt}}
					\newcommand{\YQ}{0};
					\newcommand{\YW}{0.6667};
					\newcommand{\YE}{1.3333};
					\newcommand{\YR}{2};
					\newcommand{\YT}{2.6667};
					\newcommand{\YY}{3.3333};
					\newcommand{\YU}{4};
					\newcommand{\YM}{-1.5};
					\newcommand{\YD}{-0.8};
					\newcommand{\YP}{4.5};
					\newcommand{\YZ}{-0.2};
					\newcommand{\SH}{0.25};
					\newcommand{\OT}{0.6};

					\newcommand\XA{1.3};
					\draw (\XA,\YQ) -- (\XA,\YU);
					\draw (\XA cm-2pt,\YQ) node[anchor=east]{$t_n+\dt$} -- (\XA cm+2pt,\YQ);
					\draw (\XA cm-2pt,\YU) node[anchor=east]{$t_n$} -- (\XA cm+2pt,\YU);

					\renewcommand\XA{2};
					\draw (\XA-\SH,\YD) node[anchor=west] {$\underline{\uvec{c}}^{(0)}$};
					
					\draw (\XA,\YQ) -- (\XA,\YU);
					\fill (\XA,\YQ)  node[cross] {};		
					\fill (\XA,\YU)  node[cross] {};

					\path (\XA,\YZ) edge[->,>=stealth',bend right] node[anchor=north] {} (3,\YZ);
					
					\renewcommand\XA{3};
					
					\draw (\XA-\SH,\YD) node[anchor=west] {$\underline{\uvec{c}}^{(1)}$};
					
					\draw (\XA,\YQ) -- (\XA,\YU);
					\fill (\XA,\YQ)  node[cross] {};		
					\fill (\XA,\YU)  node[cross] {};

					\path (\XA,\YZ) edge[->,>=stealth',bend right] node[anchor=north] {} (4,\YZ);
					
					\renewcommand\XA{4};
					\draw (\XA-\SH,\YD) node[anchor=west] {$\underline{\uvec{c}}^{*(1)}$};
					\draw (\XA,\YQ) -- (\XA,\YU);
					\draw (\XA,\YQ) node[cross] {};
					\draw (\XA,\YQ) node[anchor=south west] {};
					\draw (\XA,\YR) node[cross] {};
					\draw (\XA,\YR) node[anchor=south west] {};
					\draw (\XA,\YU) node[cross] {};
					\draw (\XA,\YU) node[anchor=south west] {};
					
					\path (\XA,\YZ) edge[->,>=stealth',bend right] node[anchor=north] {} (5,\YZ);

					\renewcommand\XA{5};
					\draw (\XA-\SH,\YD) node[anchor=west] {$\underline{\uvec{c}}^{(2)}$};
					
					\draw (\XA,\YQ) -- (\XA,\YU);
					\fill (\XA,\YQ) node[cross] {};
					\fill (\XA,\YR) node[cross] {};
					\fill (\XA,\YU) node[cross] {};
					
					\path (\XA,\YZ) edge[->,>=stealth',bend right] node[anchor=north] {} (6,\YZ);
					\renewcommand\XA{6};
					
					\draw (\XA-\SH,\YD) node[anchor=west] {$\underline{\uvec{c}}^{*(2)}$};
					
					\draw (\XA,\YQ) -- (\XA,\YU);
					\draw (\XA,\YQ) node[cross] {};
					\draw (\XA,\YQ) node[anchor=south west] {};
					\draw (\XA,\YE) node[cross] {};
					\draw (\XA,\YE) node[anchor=south west] {};
					\draw (\XA,\YT) node[cross] {};
					\draw (\XA,\YT) node[anchor=south west] {};
					\draw (\XA,\YU) node[cross] {};
					\draw (\XA,\YU) node[anchor=south west] {};

					\path (\XA,\YZ) edge[->,>=stealth',bend right] node[anchor=north] {} (7,\YZ);
					\renewcommand\XA{7};
					
					\draw (\XA-\SH,\YD) node[anchor=west] {$\underline{\uvec{c}}^{(3)}$};
					
					\draw (\XA,\YQ) -- (\XA,\YU);
					\draw (\XA,\YQ) node[cross] {};
					\draw (\XA,\YQ) node[anchor=south west] {};
					\draw (\XA,\YE) node[cross] {};
					\draw (\XA,\YE) node[anchor=south west] {};
					\draw (\XA,\YT) node[cross] {};
					\draw (\XA,\YT) node[anchor=south west] {};
					\draw (\XA,\YU) node[cross] {};
					\draw (\XA,\YU) node[anchor=south west] {};

					\draw[dotted] (7.25,2) --(8,2);
					
					\renewcommand\XA{8.25};
					\draw (\XA-\SH,\YD) node[anchor=west] {$\underline{\uvec{c}}^{(M)}$};
					
					\draw (\XA,\YQ) -- (\XA,\YW);
					\draw[dashed] (\XA,\YW) -- (\XA, \YY);
					\draw (\XA, \YY) -- (\XA,\YU);
					\fill (\XA,\YQ)  node[cross] {};		
					\fill (\XA,\YU)  node[cross] {};
					\fill (\XA,\YW)  node[cross] {};
					\fill (\XA,\YY)  node[cross] {};

					\path (\XA,\YZ) edge[->,>=stealth',bend right] node[anchor=north] {} (9.25,\YZ);
					
					\renewcommand\XA{9.25};
					\draw (\XA-\SH,\YD) node[anchor=west] {$\underline{\uvec{c}}^{(M+1)}$};
					
					\draw (\XA,\YQ) -- (\XA,\YW);
					\draw[dashed] (\XA,\YW) -- (\XA, \YY);
					\draw (\XA,\YY) -- (\XA,\YU);
					\fill (\XA,\YQ)  node[cross] {};		
					\fill (\XA,\YU)  node[cross] {};
					\fill (\XA,\YW)  node[cross] {};
					\fill (\XA,\YY)  node[cross] {};

				\end{tikzpicture}
			\end{subfigure}
			\caption{Sketch of the original DeC method on the left and of the modified one on the right}
			\label{fig:sketchmethods}
		\end{figure}
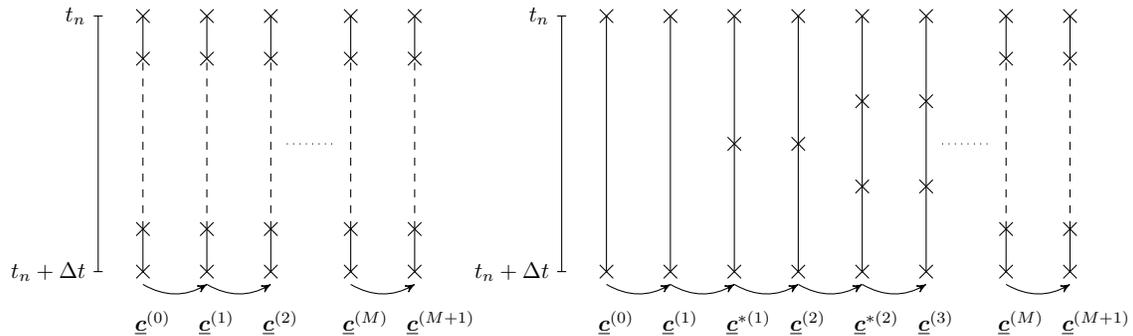

		\section{Numerical results}\label{chapNum}
		In this section, we will numerically investigate the elements previously introduced.
		The biggest part of the numerical investigation will concern the \RIcolor{one-dimensional} setting, for which some general information is reported in the following.
		Three different polynomial bases will be considered: the Bernstein polynomials, B$n$ $n=1,2,3,4$; the Lagrange polynomials associated to equispaced nodes, P$n$ $n=1,2,3$; the Lagrange polynomials associated to GL nodes, PGL$n$ $n=1,2,3,4$.
		More precisely, P1 and B1 coincide and P4 is not present because unstable. 
		Moreover, for each basis PGL$n$, we adopt the associated GL quadrature formula in order to achieve a natural high order mass lumping.

		In all the \RIcolor{one-dimensional} tests, the domain is $\Omega:=(0,25)$ and we assume no friction ($n_M=0$), unless differently specified. For the convergence analyses, we will consider the following $C^{\infty}$ bathymetry
		\begin{equation}
			B(x):=\begin{cases}
				0.2\exp{\left(1-\frac{1}{1-\left(\frac{x-10}{5}\right)^2}\right)}, & \text{if}~5<x<15,\\
				0, & \text{otherwise},
			\end{cases}
			\label{eq:smooth_bathymetry}
		\end{equation}
		while, in all the other cases, we will consider the $C^{0}$ bathymetry
		\begin{equation}
			B(x):=\begin{cases}
				0.2-0.05(x-10)^2, &\text{if}~8<x<12,\\
				0, & \text{otherwise}.
			\end{cases}
			\label{eq:c0_bathymetry}
		\end{equation}
		
		Concerning the parameters $\alpha_{f,r}:=\delta_r \bar{\rho}_f h_f^{2r}$ in \eqref{eq:jc} and \eqref{eq:jt} and $\alpha_{f}:=\delta \bar{\rho}_f h_f^{2}$ in \eqref{eq:je}, \eqref{eq:jr} and \eqref{eq:jg}, we set $\bar{\rho}_f$ to be the spectral radius of the Jacobian $\uvec{J}:=\frac{\partial \uvec{F}}{\partial \uvec{u}}$ of the flux at the interface $f$ and 
		\begin{equation}
			h_f:=\left(\frac{1}{2} \sum\limits_{x_i \in \left(\cup_{K \in K_f} K \right)} \abs{ \Bigg\llbracket \frac{\partial}{\partial x} \varphi_i \Bigg\rrbracket } \right)^{-1},
		\end{equation}
		where $K_f$ denotes the set of the two elements containing the interface $f$.
		For the parameters $\delta_r$ and $\delta$, we adopt the values reported in Table \ref{tab:coeff_CIP}. Let us notice that, in the context of jc \eqref{eq:jc} and jt \eqref{eq:jt}, we consider here the stabilization on the jump of the first and of the second derivatives.

		\begin{table}
			\centering
			\begin{tabular}{|c|c|c|c|c|}\hline
				&	P1=B1,PGL1 & B2,P2,PGL2 & B3,P3,PGL3 & B4,PGL4\\\hline
				$\delta_1=\delta$  &  0.05  &  0.3  &  0.15 &  0.5   \\ \hline		
				$\delta_2$         &  0.5   &  0.2  &  0.2  &  0.01  \\ \hline		
			\end{tabular}
			\caption{Coefficients adopted for $\delta_r$ and $\delta$ in the CIP stabilizations for the different basis functions in the \RIcolor{one-dimensional} tests \label{tab:coeff_CIP}}
		\end{table}
		
		In all the tests, we set CFL:=0.1, except for the ones involving B4 and PGL4, in which we adopt CFL:=0.05.
		In the context of unsteady tests, we will report results obtained with the basis functions PGL$n$ only, since, as underlined in \cite{Decremi,abgrall2019high,michel2021spectral,michel2022spectral,micalizzi2023new}, the DeC methods for CG involving the low order mass lumping require more iterations than what expected from theory for discretizations from order 4 on, for unsteady simulations, in order to attain the formal order of accuracy.
		
		\begin{remark}[On the choice of the parameters]
			One must be very careful in tuning the coefficients $\delta_r$ and $\delta$. A wrong choice can lead to unstable schemes, characterized by lower orders of convergence, with respect to the ones expected from theory, or even blow-ups. For further details, the reader is referred to the study of the linear stability presented in \cite{michel2021spectral}, where a collection of optimal settings in terms of CFL and stabilization parameters is reported. Nevertheless, the analysis behind the definition of such optimal settings does not directly apply to the context of this work for several reasons: the model problem was the linear advection equation, while, here we deal with the nonlinear SW equations; the DeC time integration method considered was the original formulation of the DeC presented in \cite{Decremi} and not the novel modification introduced in \cite{micalizzi2023new}; only the jump of the first derivative was taken into account, instead, here we consider the stabilization on the jump of the second derivative in the context of two stabilizations, i.e., jc \eqref{eq:jc} and jt \eqref{eq:jt}.
		\end{remark}

		%
		%
		%
		%
		%

		\begin{remark}[On the coupling of jr and jg with a WB space discretization]\label{rmk:jrjg}
			As already pointed out, the main focus of the stabilizations jr \eqref{eq:jr} and jg \eqref{eq:jg} is a consistent approximation of the same quantity: $\frac{\partial}{\partial x} \uvec{F}-\uvec{S}$.
			Generally speaking, there is no reason to use the stabilization based on the jump of the residual when one has the global flux at hand, e.g., in the context of the space discretization WB-$\GF$. 
			Similarly, when the global flux is not available, as in the context of the space discretization WB-$\HS$, it is not reasonable to compute it and use it just for the stabilization. 
			To sum up, as a general rule, we will couple the stabilization jr with the space discretization WB-$\HS$ and the stabilization jg with the space discretization WB-$\GF$.
		\end{remark}

		Due to the huge amount of possible combinations bewteen the considered elements (in the context of every test, apart from the reference non-WB approach, we have two WB space discretizations, four WB CIP stabilizations, three types of basis functions and, for each of them, different degrees), we will not systematically present all the results, but rather some representatives, to allow a meaningful comprehension.
		
		After a deep investigation of the \RIcolor{one-dimensional} setting, we will report some results obtained for multidimensional tests. 
		In particular, in this case, we will focus on high order convergence and well--balancing toward lake at rest for the reference non-WB framework and WB-HS coupled with jt.
		The main goal of this paper is to achieve well--balancing with respect to generic steady states not known in closed-form, which is successfully achieved in the \RIcolor{one-dimensional} tests;
		concerning the multidimensional setting, instead, the deep investigation of this aspect is left for future works.
		The results reported here for multidimensional tests have the only purpose to show the possibility to easily apply some of the proposed discretizations to an unstructured framework.

		The numerical results are organized as follows.
		We will start by testing, in Section \ref{sub:exwblatr}, the exact well-balancing with respect to the lake at rest steady state. We will collect the results, for the basis functions of highest degree, B4, PGL4 and P3, in tables.
		We will continue, in Section \ref{sub:AHO}, with some convergence analyses to check the arbitary high order accuracy of all the elements introduced on smooth steady states: a supercritical flow, a subcritical flow and a transcritical flow. In particular, in order to provide results for all the different types of polynomial bases, the results of the convergence analysis on the three steady states will be shown respectively for B4, PGL4 and P3. Nevertheless, several extra comparisons will be reported, concerning the settings with the best performances.
		In Section~\ref{sub:pert_lake_at_rest}, we will report the results of simulations involving the evolution of small perturbations of the lake at rest steady state.
		In Section~\ref{sub:pert_moving}, we will focus on the evolution of small perturbations of general steady states, whose analytical expression is not available in closed-form, with and without friction.
		Finally, in Section~\ref{sub:2d_tests}, we will report the results of some multidimensional tests on unstructured meshes.

		\subsection{Exact well-balancing for lake at rest}\label{sub:exwblatr}
		The simulations in this section are meant to test the WB feature with respect to the lake at rest steady state. We assume, in this context, the $C^0$ bathymetry \eqref{eq:c0_bathymetry}, as all the WB elements that we introduced do not require any smoothness assumption. Let us consider the lake at rest steady state given by 
		\begin{equation}
			\eta=H+B\equiv \overline{\eta}, \quad v\equiv 0,
		\end{equation}
		with $\overline{\eta}:=0.5$ and a final time $T_f:=10$ with strong boundary conditions. 
		The results got for $B4$, $PGL4$ and $P3$ and $100$ elements are respectively reported in Tables \ref{tab:LatRB4}, \ref{tab:LatRPGL4} and \ref{tab:LatRP3}.
		As expected from theory, the reference non-WB approach gives an error which is far from machine precision. The same holds for schemes obtained by coupling a WB space discretization with the orginal non-WB stabilization jc. In all the remaining cases, we have combinations of WB elements and, in fact, the related errors are around machine precision.

		\begin{center}
			\begin{table}
				\centering
				\begin{tabular}{cc|c|c|c}
					\cline{3-4}
					& & $L^1$ error $H$  & $L^1$ error $q$ \\ \cline{1-4}
					\multicolumn{2}{ |c| }{\multirow{1}{*}{Reference non-WB} } & 1.577E-002 &  1.169E-003 &      \\ \cline{1-4}
					\multicolumn{1}{ |c  }{\multirow{4}{*}{WB-$\HS$} } &
					\multicolumn{1}{ |c| }{jc (non-WB)} & 9.568E-004 &  2.060E-003 &      \\ \cline{2-4}
					\multicolumn{1}{ |c  }{}                        &
					\multicolumn{1}{ |c| }{jt} & 3.786E-015  & 1.084E-013 &      \\ \cline{2-4}
					\multicolumn{1}{ |c  }{}                        &
					\multicolumn{1}{ |c| }{je} & 9.028E-015 &  8.644E-014 &      \\ \cline{2-4}
					\multicolumn{1}{ |c  }{}                        &
					\multicolumn{1}{ |c| }{jr} & 2.515E-015  & 9.935E-014 &      \\ \cline{1-4}
					\multicolumn{1}{ |c  }{\multirow{4}{*}{WB-$\GF$} } &
					\multicolumn{1}{ |c| }{jc (non-WB)} & 9.608E-004 &  2.080E-003 &      \\ \cline{2-4}
					\multicolumn{1}{ |c  }{}                        &
					\multicolumn{1}{ |c| }{jt} & 5.215E-015  & 1.441E-014 &      \\ \cline{2-4}
					\multicolumn{1}{ |c  }{}                        &
					\multicolumn{1}{ |c| }{je} & 4.510E-015  & 9.251E-015 &      \\ \cline{2-4}
					\multicolumn{1}{ |c  }{}                        &
					\multicolumn{1}{ |c| }{jg} & 4.393E-015 &  1.303E-014 &      \\ \cline{1-4}
				\end{tabular} 
				\caption{Lake at rest, B4}
				\label{tab:LatRB4}
			\end{table}
			\begin{table}
				\centering
				\begin{tabular}{cc|c|c|c}
					\cline{3-4}
					& & $L^1$ error $H$  & $L^1$ error $q$ \\ \cline{1-4}
					\multicolumn{2}{ |c| }{\multirow{1}{*}{Reference non-WB} } & 1.028E-002 &  1.879E-003 &      \\ \cline{1-4}
					\multicolumn{1}{ |c  }{\multirow{4}{*}{WB-$\HS$} } &
					\multicolumn{1}{ |c| }{jc (non-WB)} & 3.007E-004 &  5.963E-004 &      \\ \cline{2-4}
					\multicolumn{1}{ |c  }{}                        &
					\multicolumn{1}{ |c| }{jt} & 9.403E-013 &  4.418E-012 &      \\ \cline{2-4}
					\multicolumn{1}{ |c  }{}                        &
					\multicolumn{1}{ |c| }{je} & 9.396E-013 &  4.415E-012 &      \\ \cline{2-4}
					\multicolumn{1}{ |c  }{}                        &
					\multicolumn{1}{ |c| }{jr} & 9.409E-013 &  4.415E-012 &      \\ \cline{1-4}
					\multicolumn{1}{ |c  }{\multirow{4}{*}{WB-$\GF$} } &
					\multicolumn{1}{ |c| }{jc (non-WB)} & 3.016E-004 &  6.168E-004 &      \\ \cline{2-4}
					\multicolumn{1}{ |c  }{}                        &
					\multicolumn{1}{ |c| }{jt} & 6.431E-013 &  2.659E-012 &      \\ \cline{2-4}
					\multicolumn{1}{ |c  }{}                        &
					\multicolumn{1}{ |c| }{je} & 6.423E-013 &  2.659E-012 &      \\ \cline{2-4}
					\multicolumn{1}{ |c  }{}                        &
					\multicolumn{1}{ |c| }{jg} & 6.431E-013 &  2.651E-012 &      \\ \cline{1-4}
				\end{tabular} 
				\caption{Lake at rest, PGL4}
				\label{tab:LatRPGL4}
			\end{table}
			\begin{table}
				\centering
				\begin{tabular}{cc|c|c|c}
					\cline{3-4}
					& & $L^1$ error $H$  & $L^1$ error $q$ \\ \cline{1-4}
					\multicolumn{2}{ |c| }{\multirow{1}{*}{Reference non-WB} } & 2.551E-002 &  2.931E-003 &      \\ \cline{1-4}
					\multicolumn{1}{ |c  }{\multirow{4}{*}{WB-$\HS$} } &
					\multicolumn{1}{ |c| }{jc (non-WB)} & 4.185E-004 &  7.355E-004 &      \\ \cline{2-4}
					\multicolumn{1}{ |c  }{}                        &
					\multicolumn{1}{ |c| }{jt} & 7.342E-015 &  1.170E-014 &      \\ \cline{2-4}
					\multicolumn{1}{ |c  }{}                        &
					\multicolumn{1}{ |c| }{je} & 8.673E-015 &  1.451E-014 &      \\ \cline{2-4}
					\multicolumn{1}{ |c  }{}                        &
					\multicolumn{1}{ |c| }{jr} & 8.004E-015 &  1.382E-014 &      \\ \cline{1-4}
					\multicolumn{1}{ |c  }{\multirow{4}{*}{WB-$\GF$} } &
					\multicolumn{1}{ |c| }{jc (non-WB)} & 4.206E-004 &  7.420E-004 &      \\ \cline{2-4}
					\multicolumn{1}{ |c  }{}                        &
					\multicolumn{1}{ |c| }{jt} & 6.213E-014 &  2.522E-013 &      \\ \cline{2-4}
					\multicolumn{1}{ |c  }{}                        &
					\multicolumn{1}{ |c| }{je} & 6.153E-014 &  2.502E-013 &      \\ \cline{2-4}
					\multicolumn{1}{ |c  }{}                        &
					\multicolumn{1}{ |c| }{jg} & 6.104E-014 &  2.502E-013 &      \\ \cline{1-4}
				\end{tabular} 
				\caption{Lake at rest, P3}
				\label{tab:LatRP3}
			\end{table}
		\end{center}
		
		\subsection{Arbitrary high order accuracy}\label{sub:AHO}
		In this section, we aim at numerically confirming the arbitrary high order accuracy of the considered space discretizations and of the novel jump stabilizations on smooth solutions. Therefore, for the tests presented here, we assume the $C^{\infty}$ bathymetry \eqref{eq:smooth_bathymetry}.
		Let us consider the three frictionless isoenergetic smooth steady states \cite{delestre2013swashes} satisfying \eqref{eq:steady_no_friction} with the following boundary conditions
		
		\begin{itemize}
			\item[•] \textbf{Supercritical}
			\begin{equation}
				q_L:= 24, \quad H_L:=2,
				\label{eq:superBC}
			\end{equation}
			\item[•] \textbf{Subcritical}
			\begin{equation}
				q_L:= 4.42, \quad H_R:=2,
				\label{eq:subBC}
			\end{equation}
			\item[•] \textbf{Transcritical}
			\begin{equation}
				q_L:= 1.53,
				\label{eq:transBC}
			\end{equation}
		\end{itemize}
		where, due to the fact that the momentum of the flow must be constant, the value at the boundary prescribes also the value in the interior of the domain: $q\equiv q_L$. Since $B$ is given, the total water height in each point, for the three steady states, can be (exactly) computed by solving \eqref{eq:steady_no_friction} with respect to $H$ and is depicted in Figure \ref{fig:smooth_steady_states}.
		We consider a final time $T_f:=100.$
		
		\begin{figure}
			\centering
			\begin{subfigure}[b]{0.30\textwidth}
				\centering
				\includegraphics[width=\textwidth]{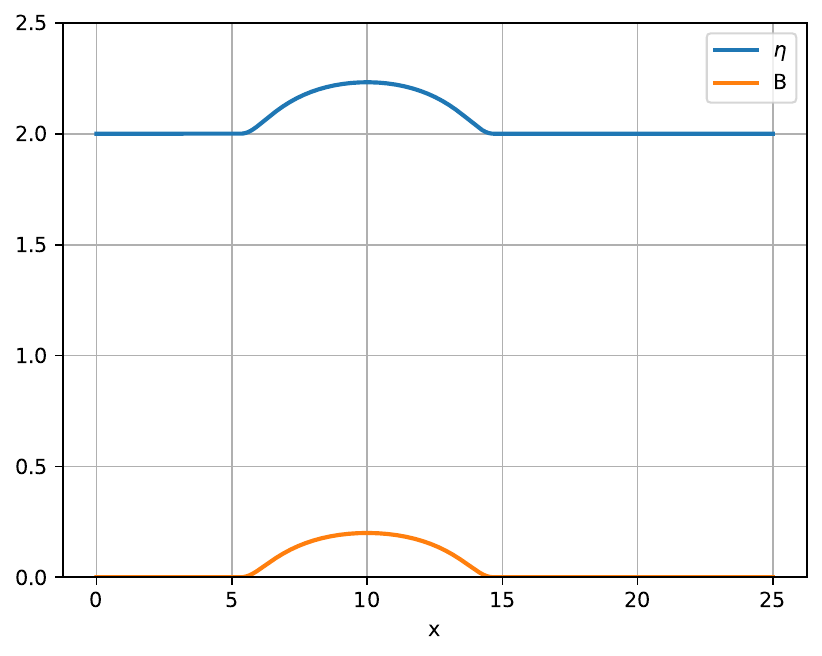}
				\caption{Supercritical}
				\label{convergence_comp_jumps_super}
			\end{subfigure}
			\begin{subfigure}[b]{0.30\textwidth}
				\centering
				\includegraphics[width=\textwidth]{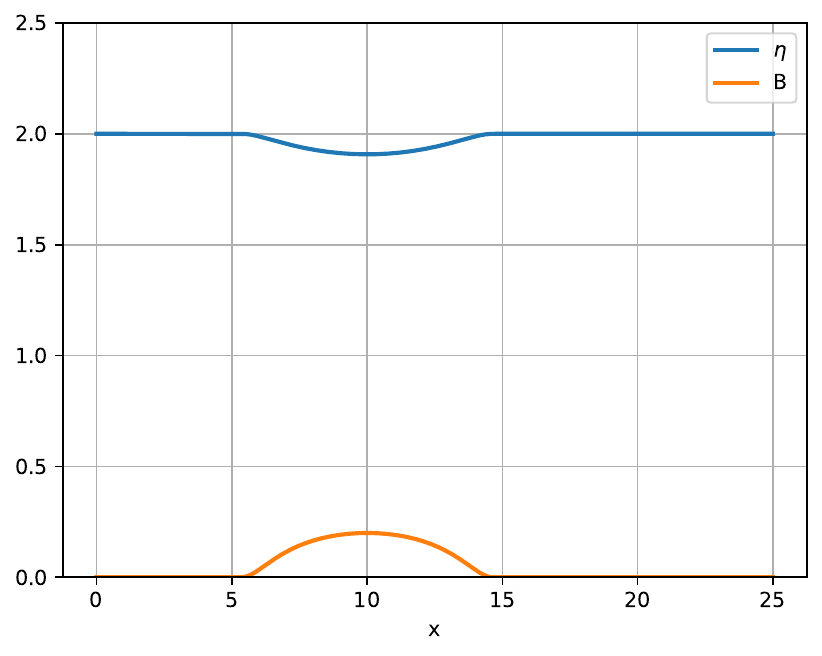}
				\caption{Subcritical}
				\label{convergence_comp_jumps_sub}
			\end{subfigure}
			\begin{subfigure}[b]{0.30\textwidth}
				\centering
				\includegraphics[width=\textwidth]{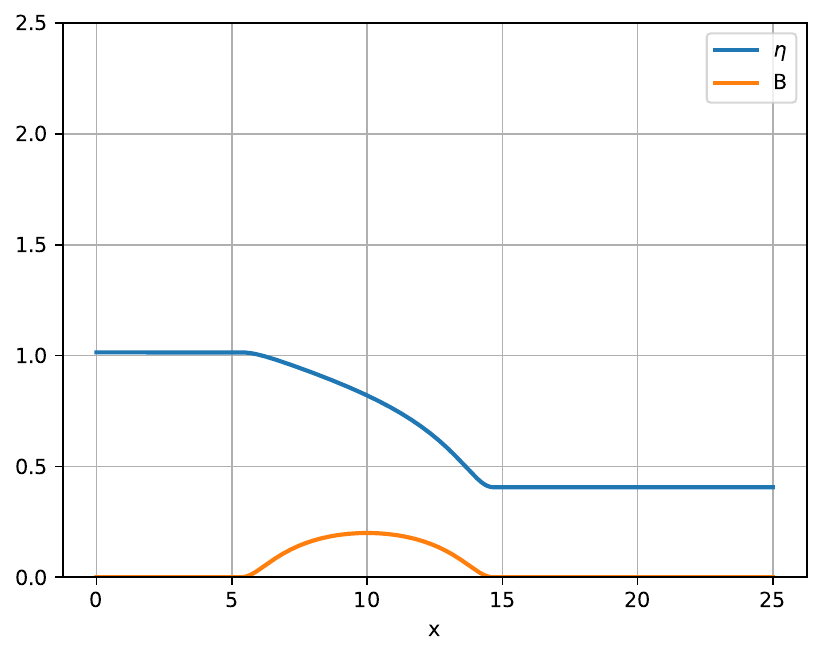}
				\caption{Transcritical}
				\label{convergence_comp_jumps_trans}
			\end{subfigure}
			\caption{Smooth steady states}
			\label{fig:smooth_steady_states}
		\end{figure}
		
		The results of the convergence analysis for the three steady states, respectively with B4, PGL4 and P3, are reported in Figure \ref{fig:convergence}. We can see how the formal order of accuracy is always recovered, with very evident superconvergences for the stabilizations involving the jump of the residual, jr  \eqref{eq:jr}, and of the derivative of the global flux, jg \eqref{eq:jg}. The errors obtained with such stabilizations are always much smaller than the ones obtained with the other schemes: roughly speaking, the difference is at least one order of magnitude, even more in the supercritical case. Further, the two stabilizations are characterized by steeper convergence slopes, with respect to the ones expected from theory, and a strong propensity to capture the constant momentum up to machine precision, see for example the supercritical tests.

		\begin{figure}
			\begin{subfigure}{0.31\textwidth}
				\includegraphics[width=\textwidth]{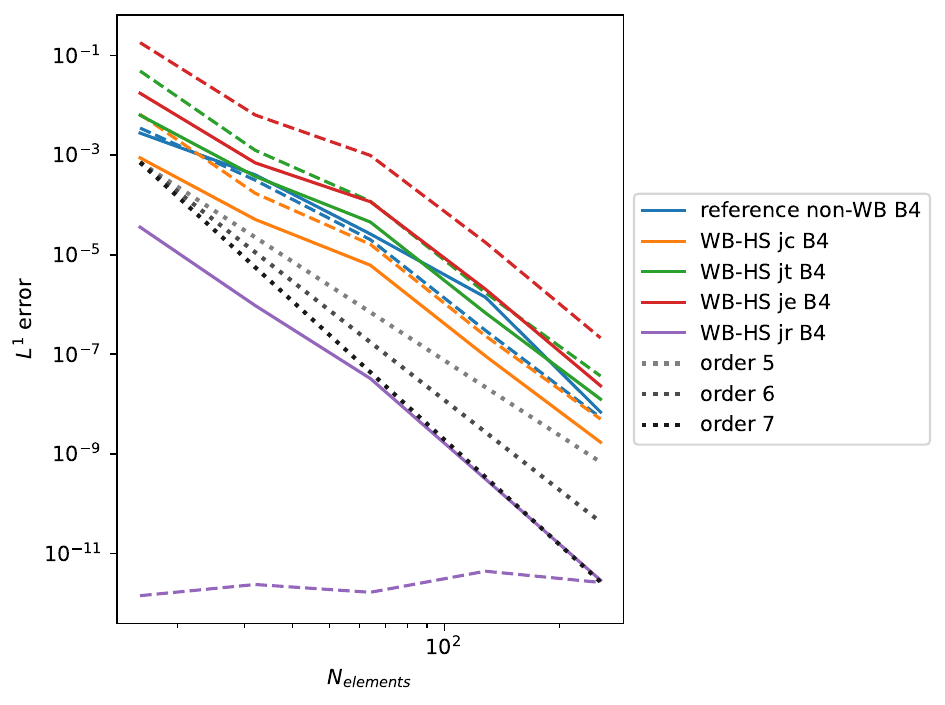}\caption{Supercritical, WB-$\HS$, B4}
			\end{subfigure}
			\begin{subfigure}{0.31\textwidth}
				\includegraphics[width=\textwidth]{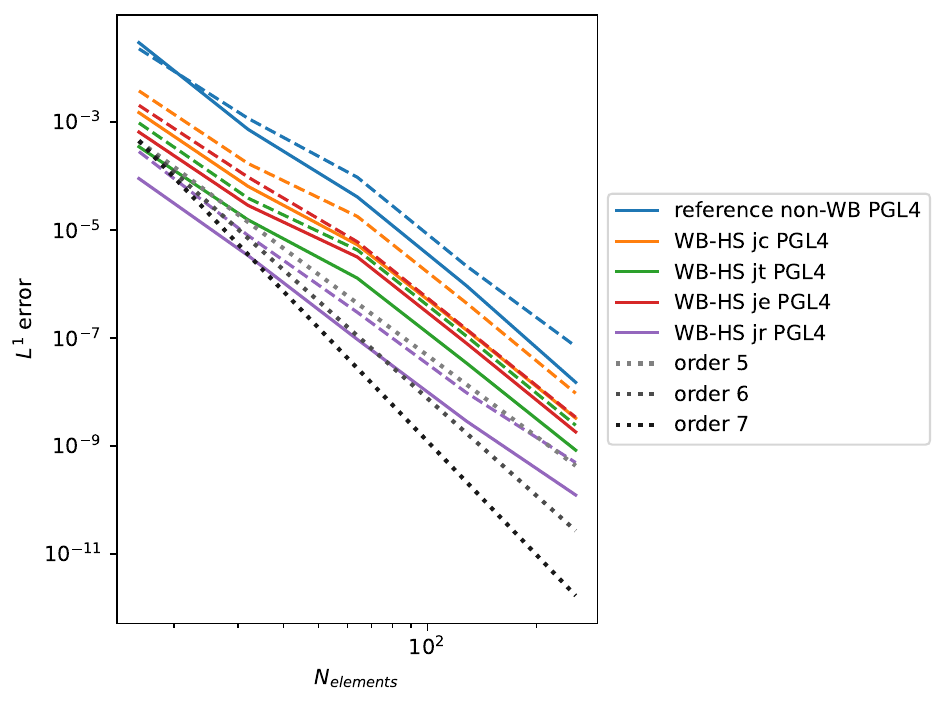}\caption{Subcritical, WB-$\HS$, PGL4}
			\end{subfigure}
			\begin{subfigure}{0.31\textwidth}
				\includegraphics[width=\textwidth]{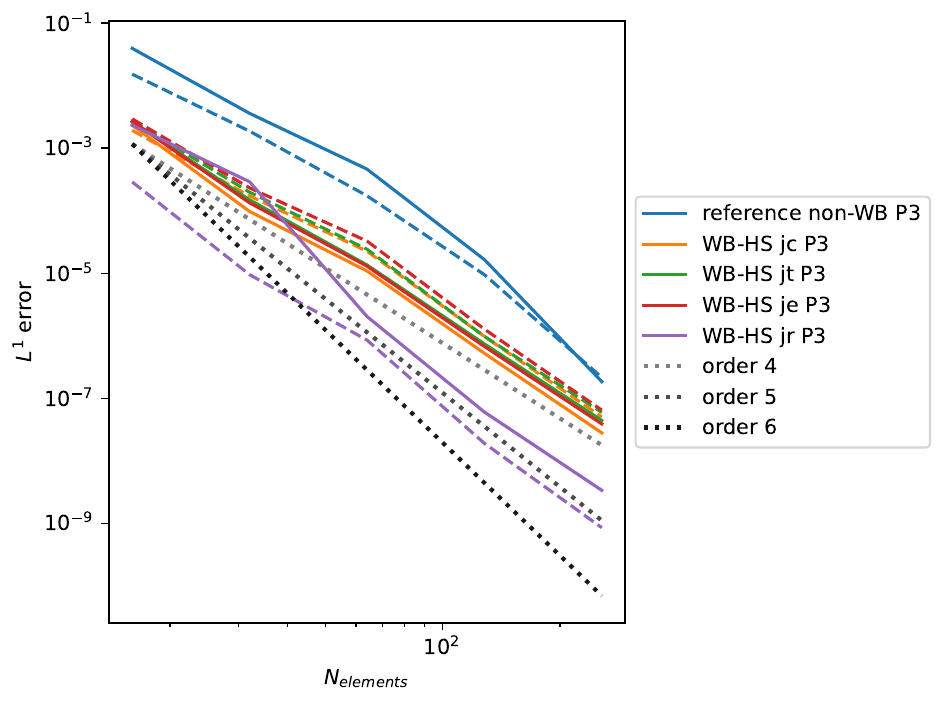}\caption{Transcritical, WB-$\HS$, P3}
			\end{subfigure}\\
			\begin{subfigure}{0.31\textwidth}
				\includegraphics[width=\textwidth]{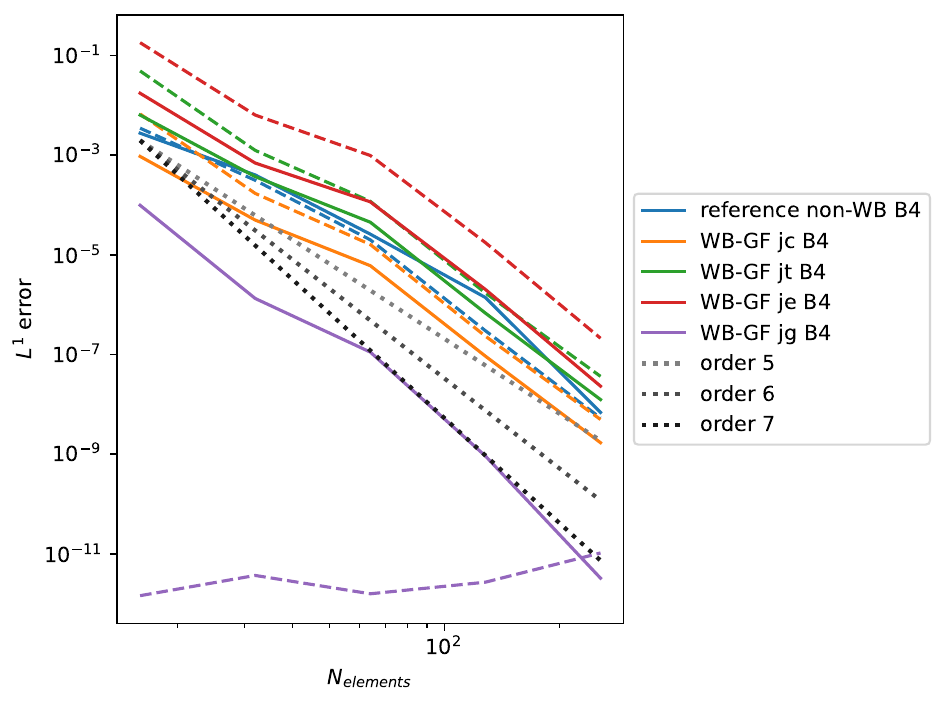}\caption{Supercritical, WB-$\GF$, B4}
			\end{subfigure}
			\begin{subfigure}{0.31\textwidth}
				\includegraphics[width=\textwidth]{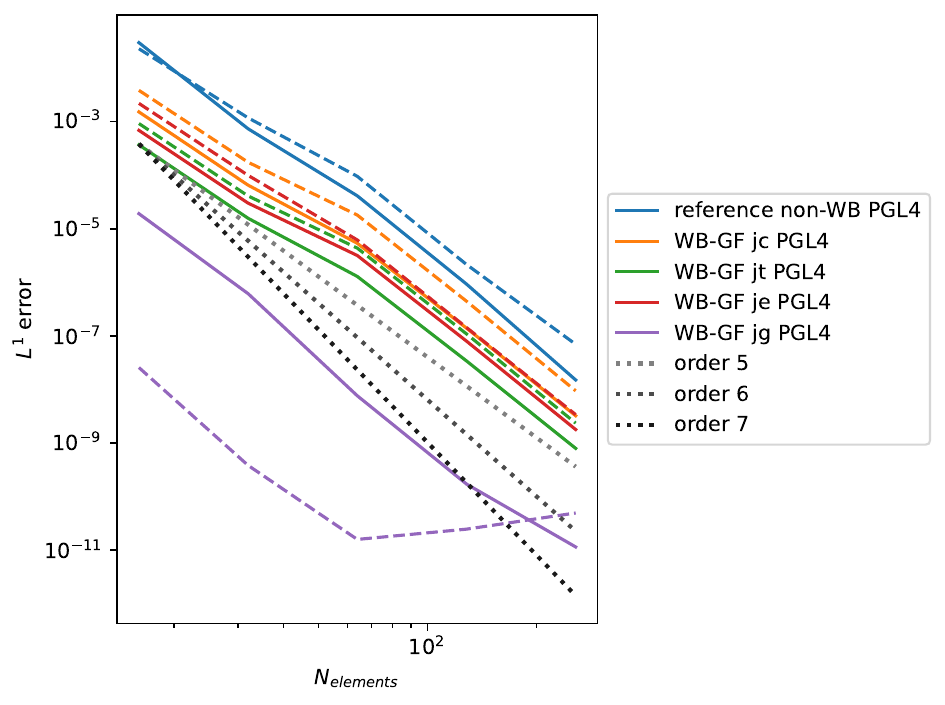}\caption{Subcritical, WB-$\GF$, PGL4}
			\end{subfigure}
			\begin{subfigure}{0.31\textwidth}
				\includegraphics[width=\textwidth]{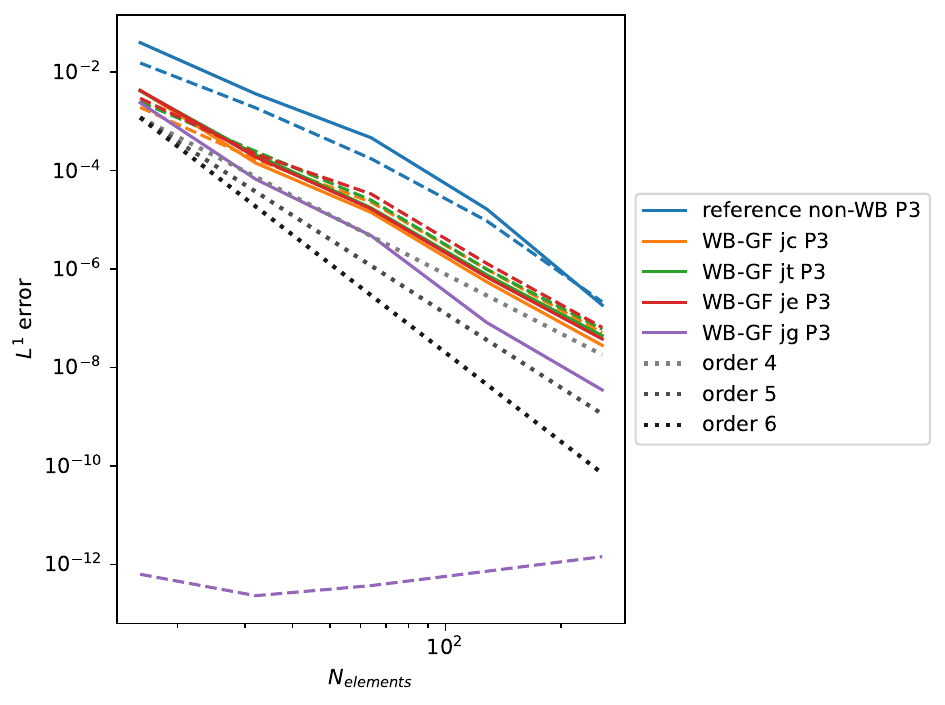}\caption{Transcritical, WB-$\GF$, P3}
			\end{subfigure}
			\caption{Convergence analysis: supercritical with B4, subcritical with PGL4 and transcritical with P3. $L^1$ error on $H$ in continuous line, on $q$ in dashed line}\label{fig:convergence}
		\end{figure}
		
		We will focus now on the two best performing jump stabilizations, neglecting the other ones for the sake of compactness. In Figure \ref{fig:convergencebases}, we display the results on the same tests for basis functions of different degrees. We can see that the superconvergences are not strictly related to the basis functions of highest degree. Apart from B1 (equivalent to P1) and PGL1, whose results confirm the expected second order accuracy, in almost all the other cases we experience convergence slopes steeper than the ones expected from theory:
		\begin{itemize}
			\item[•] in the supercritical case, B2 converges with order 4 rather than 3, B4 with 7 rather than 5; further, only for jg, B3 converges with order 6 rather than 4;
			\item[•] in the subcritical case, PGL3 converges with order 5 rather than 4; further, only for jg, PGL2 converges with order 4 rather than 3 and PGL4 with order 6 rather than 5;
			\item[•] in the transcritical case, P3 converges with order 5 rather than 4 and, only for jg, P2 converges with order 4 rather than 3.
		\end{itemize}
		Moreover, also in this case, the ability of capturing exactly the constant momentum is very remarkable, see the supercritical case or the transcritical case with jg.

		\begin{figure}
			\begin{subfigure}{0.31\textwidth}
				\includegraphics[width=\textwidth]{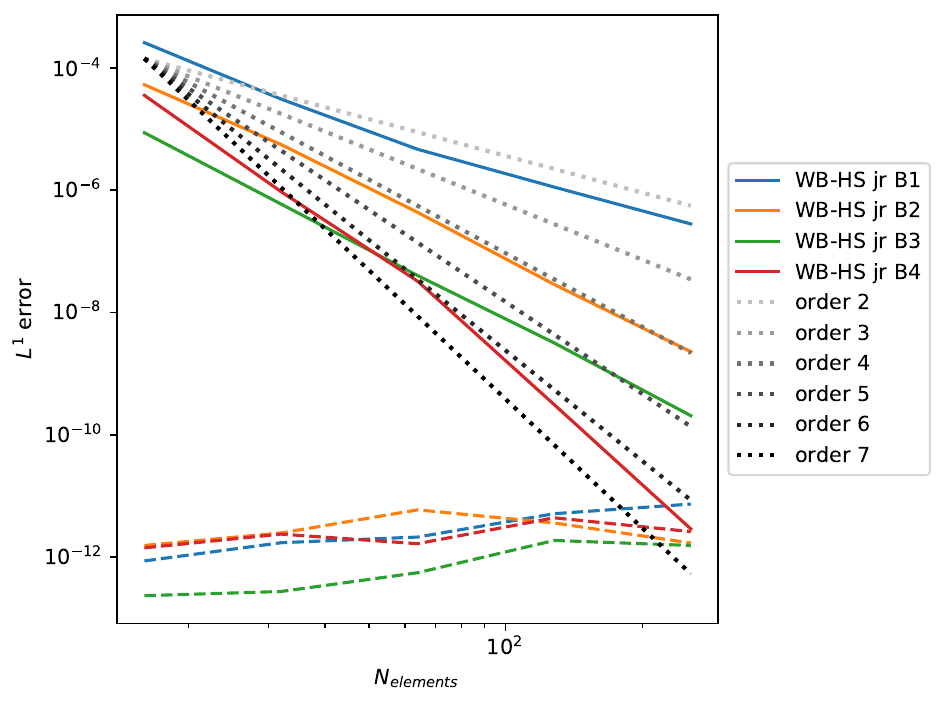}\caption{Supercritical, WB-$\HS$, jr}
			\end{subfigure}
			\begin{subfigure}{0.31\textwidth}
				\includegraphics[width=\textwidth]{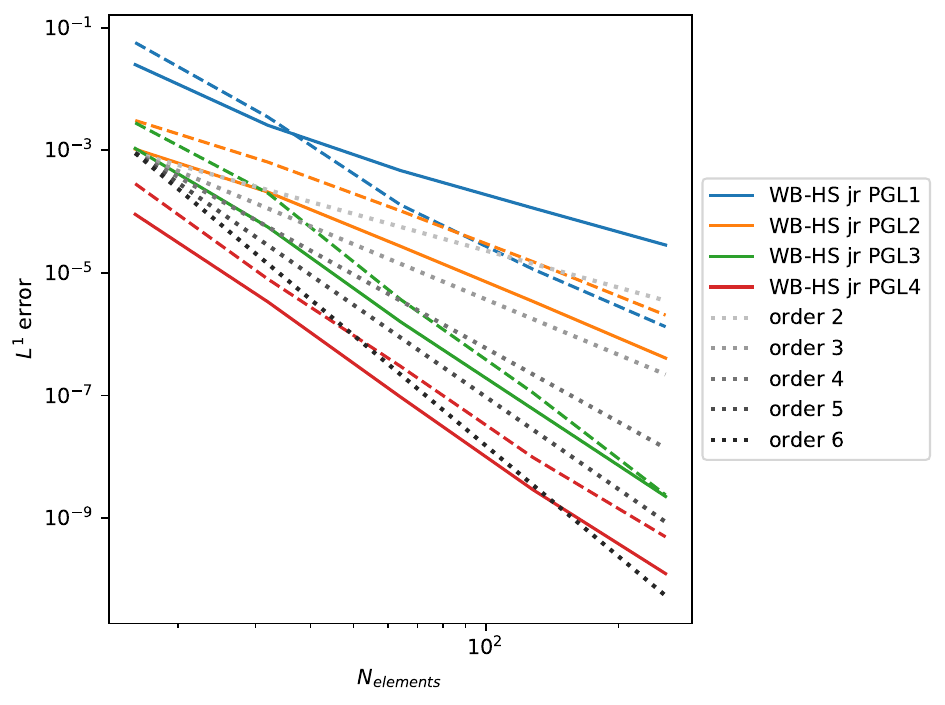}\caption{Subcritical, WB-$\HS$, jr}
			\end{subfigure}
			\begin{subfigure}{0.31\textwidth}
				\includegraphics[width=\textwidth]{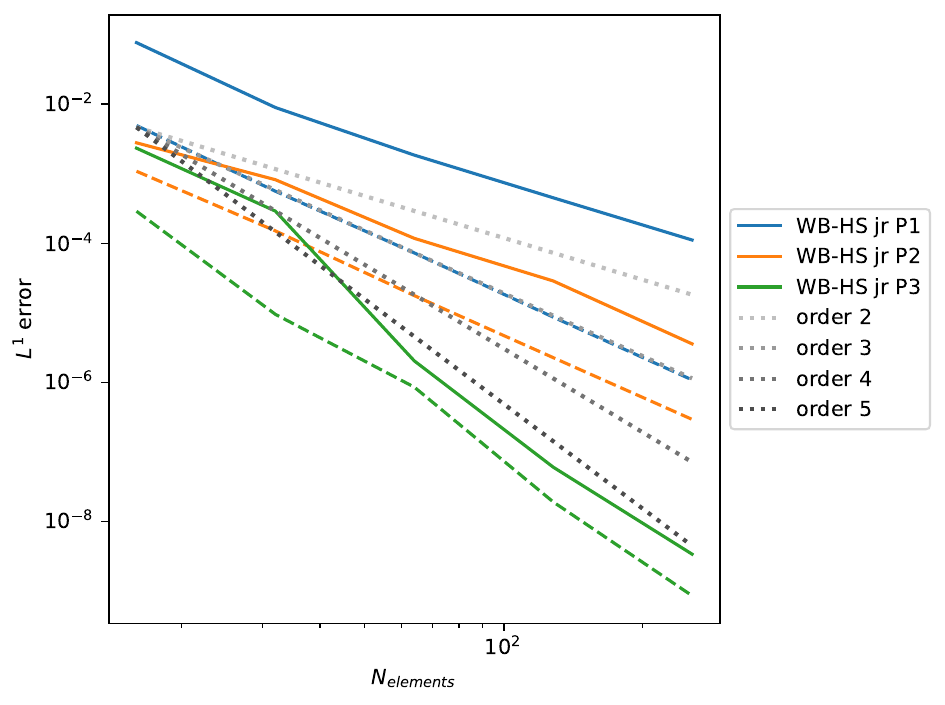}\caption{Transcritical, WB-$\HS$, jr}
			\end{subfigure}\\
			\begin{subfigure}{0.31\textwidth}
				\includegraphics[width=\textwidth]{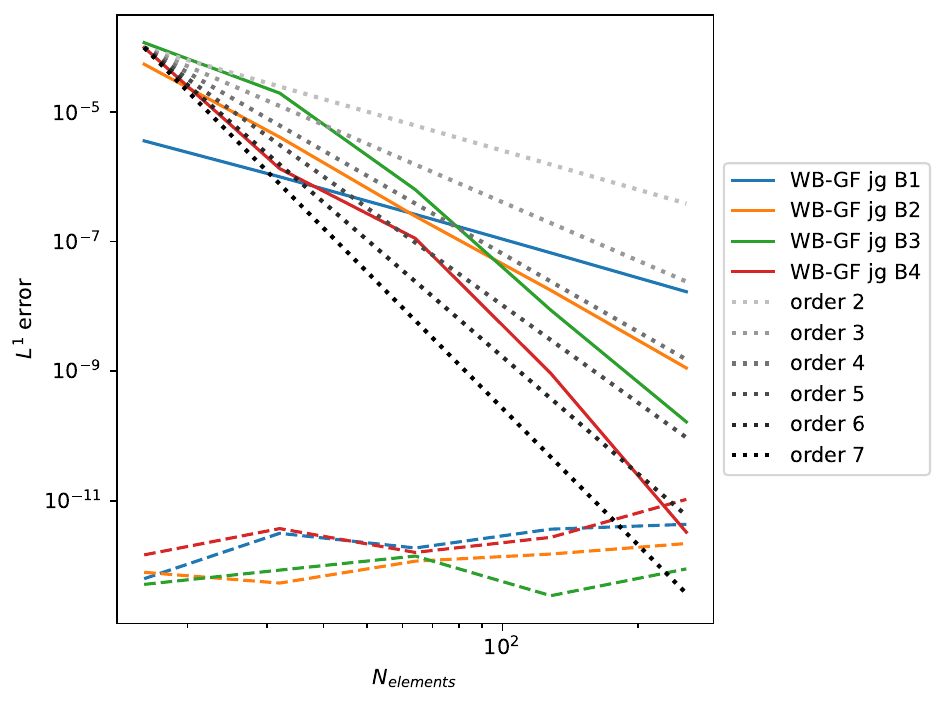}\caption{Supercritical, WB-$\GF$, jg}
			\end{subfigure}
			\begin{subfigure}{0.31\textwidth}
				\includegraphics[width=\textwidth]{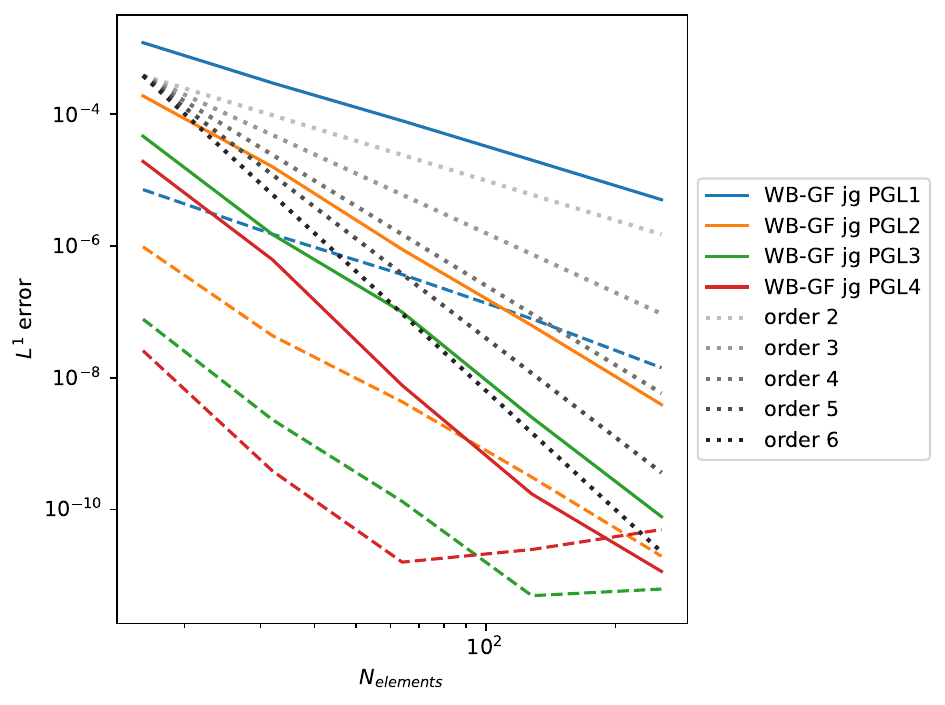}\caption{Subcritical, WB-$\GF$, jg}
			\end{subfigure}
			\begin{subfigure}{0.31\textwidth}
				\includegraphics[width=\textwidth]{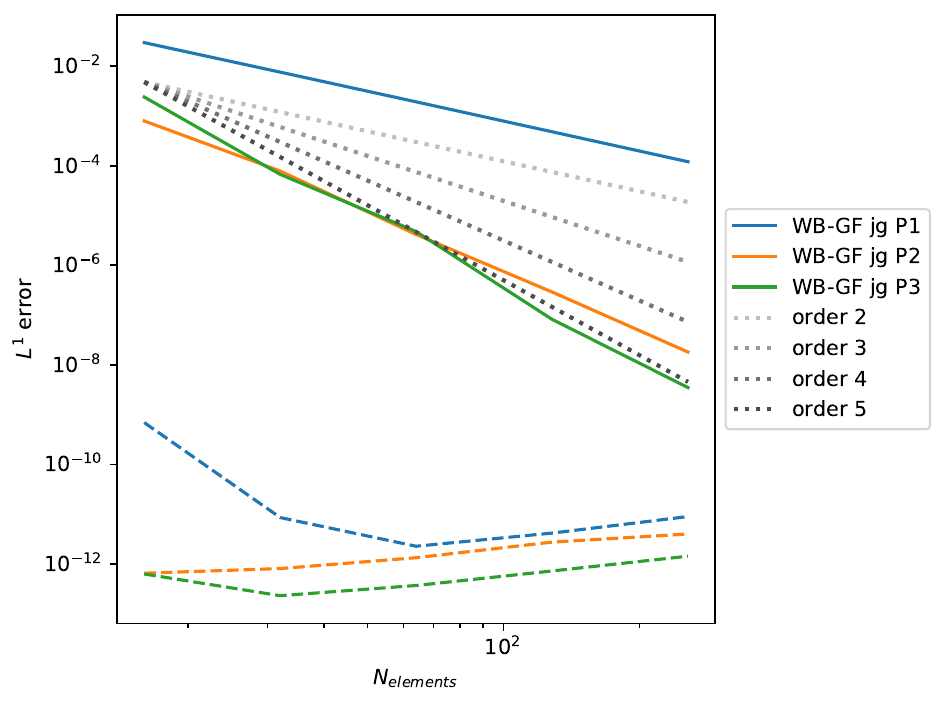}\caption{Transcritical, WB-$\GF$, jg}
			\end{subfigure}
			\caption{Convergence analysis: best performing settings for basis functions with different degrees. Supercritical with B$n$, subcritical with PGL$n$ and transcritical with P$n$. $L^1$ error on $H$ in continuous line, on $q$ in dashed line}\label{fig:convergencebases}
		\end{figure}
		
		A comparison between B3, PGL3 and P3 on the three steady states is reported in Figure \ref{fig:convergencebasescompare}. We can see that in the context of the setting WB-$\HS$-jr, P3 performs better than B3 and PGL3; this does not hold for  WB-$\GF$-jg, for which PGL3 is the best performing basis among the ones considered and the results of B3 and P3 are very similar.

		\begin{figure}
			\begin{subfigure}{0.31\textwidth}
				\includegraphics[width=\textwidth]{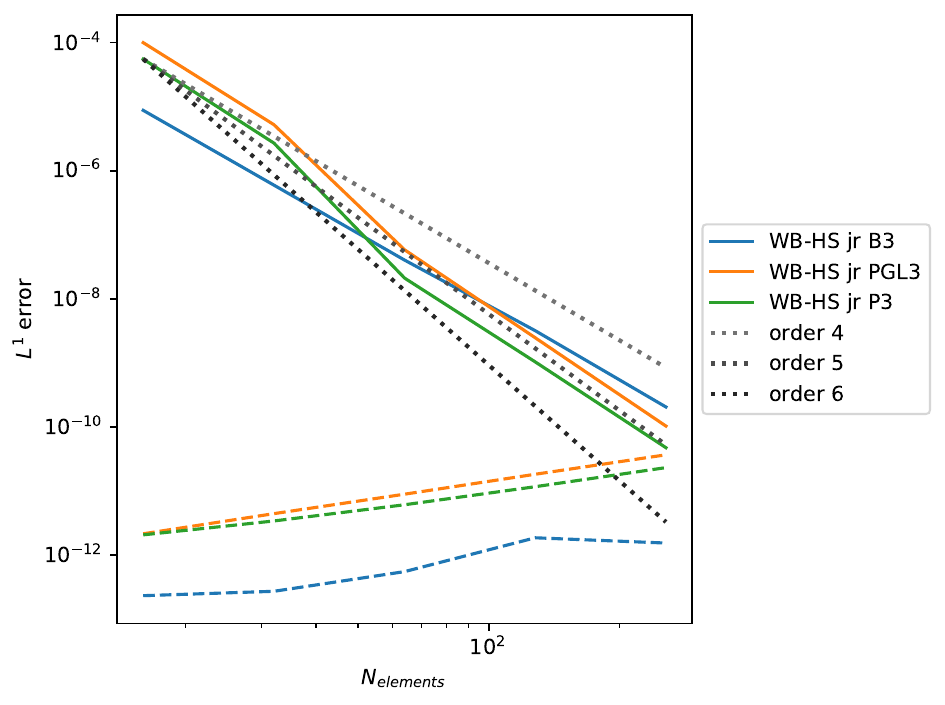}\caption{Supercritical, WB-$\HS$, jr}
			\end{subfigure}
			\begin{subfigure}{0.31\textwidth}
				\includegraphics[width=\textwidth]{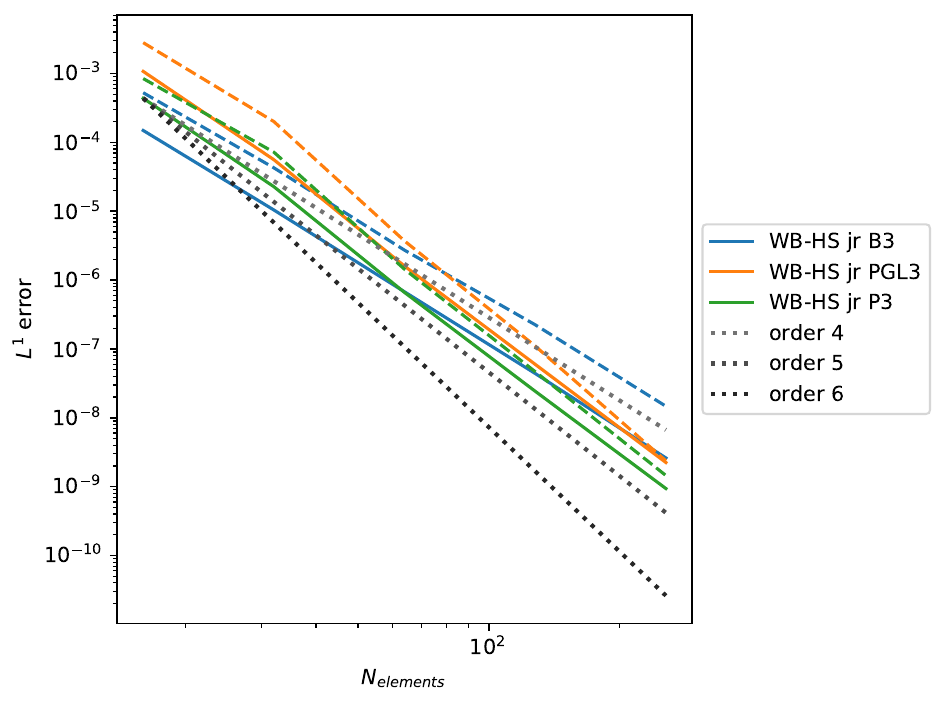}\caption{Subcritical, WB-$\HS$, jr}
			\end{subfigure}	
			\begin{subfigure}{0.31\textwidth}
				\includegraphics[width=\textwidth]{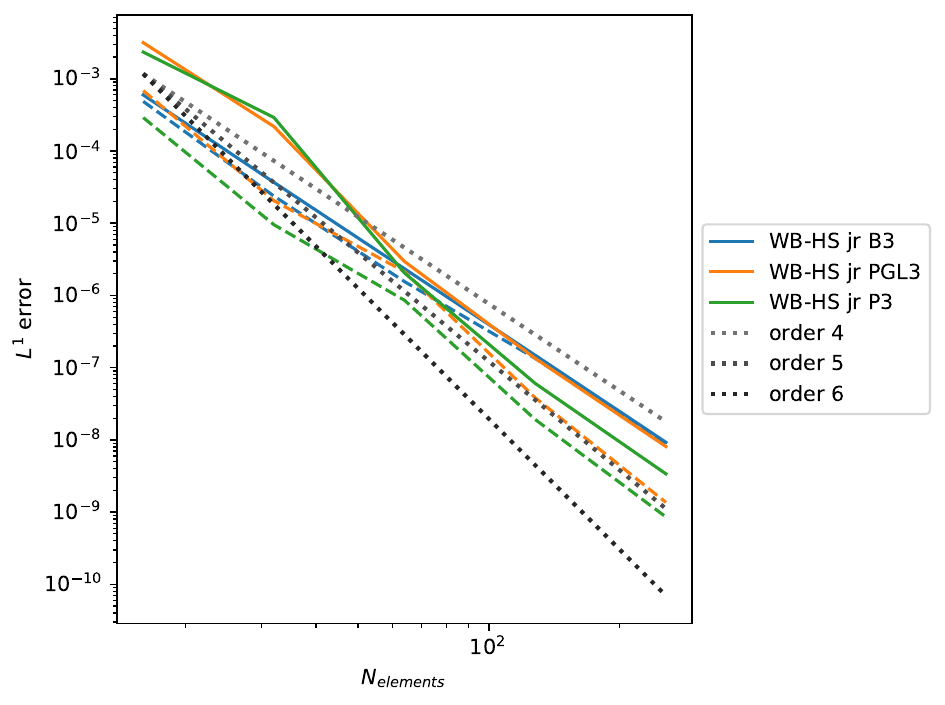}\caption{Transcritical, WB-$\HS$, jr}
			\end{subfigure}\\
			\begin{subfigure}{0.31\textwidth}
				\includegraphics[width=\textwidth]{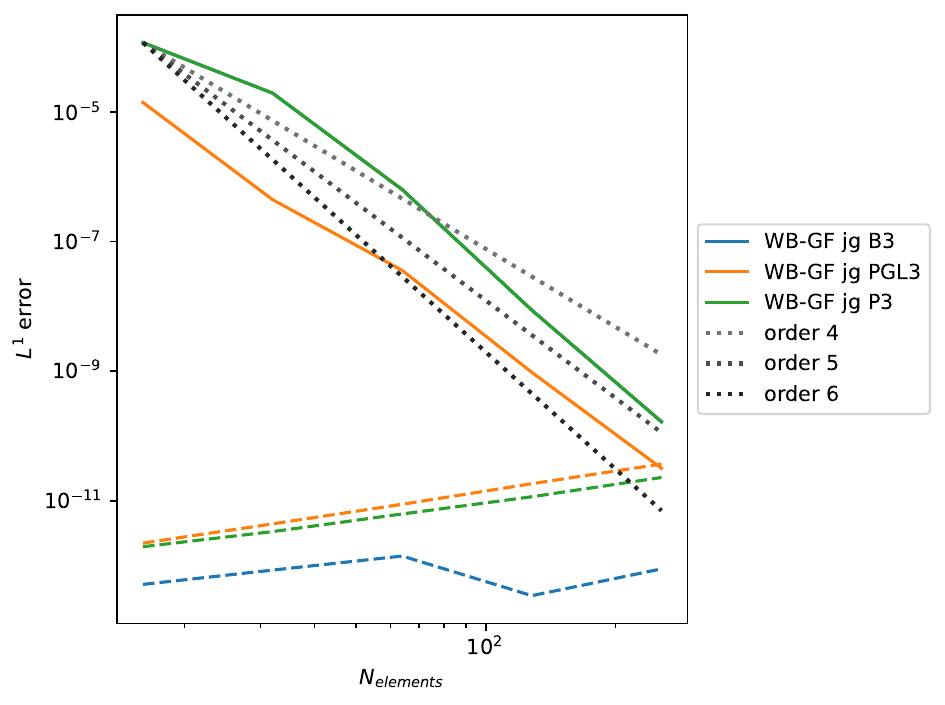}\caption{Supercritical, WB-$\GF$, jg}
			\end{subfigure}
			\begin{subfigure}{0.31\textwidth}
				\includegraphics[width=\textwidth]{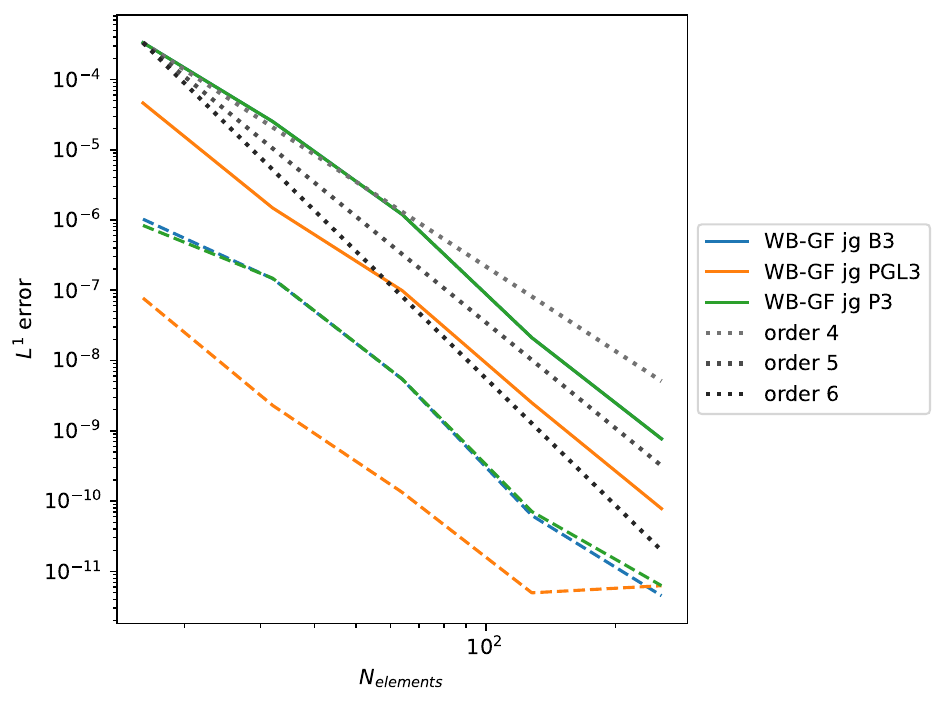}\caption{Subcritical, WB-$\GF$, jg}
			\end{subfigure}
			\begin{subfigure}{0.31\textwidth}
				\includegraphics[width=\textwidth]{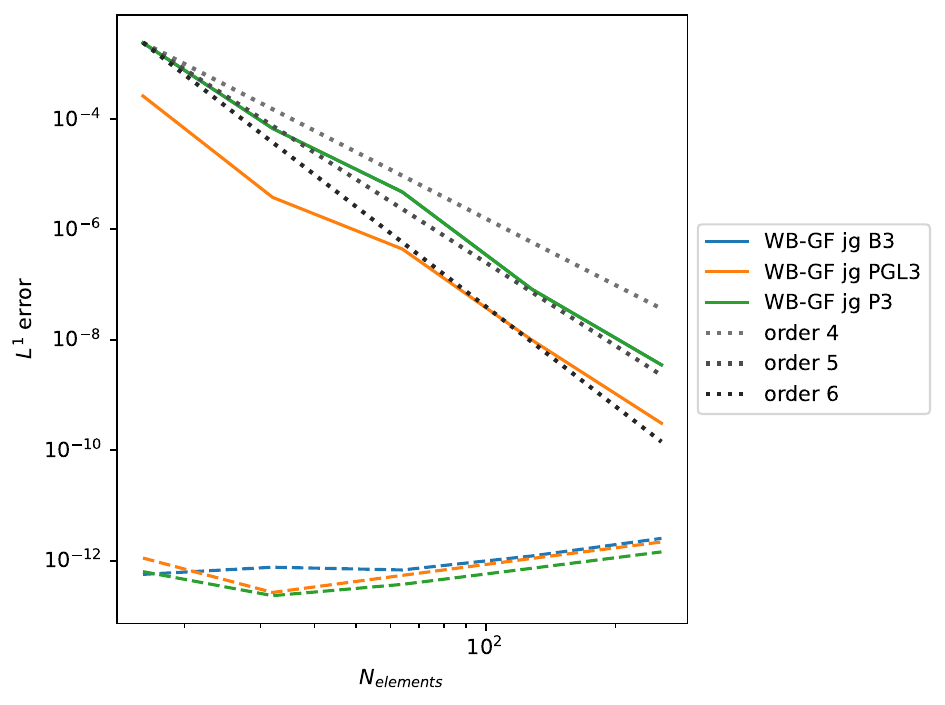}\caption{Transcritical, WB-$\GF$, jg}
			\end{subfigure}
			\caption{Convergence analysis: comparison between B3, PGL3 and P3 with the best performing settings. $L^1$ error on $H$ in continuous line, on $q$ in dashed line}\label{fig:convergencebasescompare}
		\end{figure}
		
		Finally, we present a comparison between the best performing settings for the basis functions of highest degree B4, PGL4 and P3 in Figure \ref{fig:bestperforming}. We already underlined, in Remark \ref{rmk:jrjg}, that WB-$\HS$-jr and WB-$\GF$-jg represent the most natural couplings. Nevertheless, for the sake of curiosity, we will consider also the other two possible combinations. For the supercritical flow, WB-$\HS$-jg is the best performing setting followed by WB-$\HS$-jr; for the subcritical and the transcritical flows, we can see how WB-$\GF$-jg is by far the best combination in terms of capturing of the constant momentum. In the context of the subcritical flow, such setting is also characterized by smaller errors on the water height, while, in the context of the transcritical flow the performance of all the settings under this point of view is not significatively different.
		
		\begin{figure}
			\centering
			\begin{subfigure}{0.31\textwidth}
				\includegraphics[width=\textwidth]{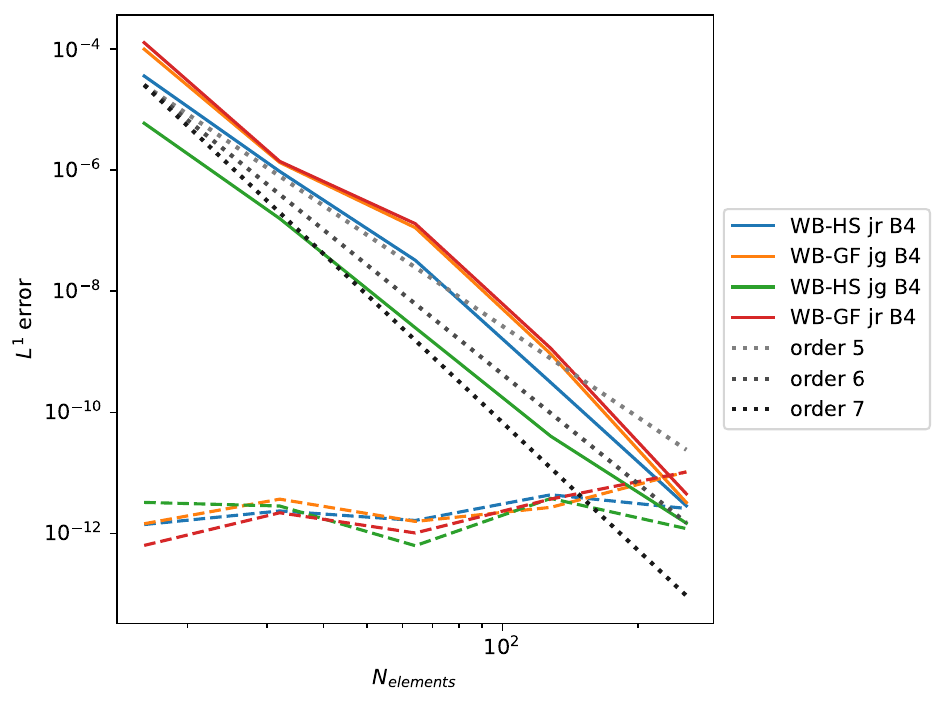}\caption{Supercritical}
			\end{subfigure}
			\begin{subfigure}{0.31\textwidth}
				\includegraphics[width=\textwidth]{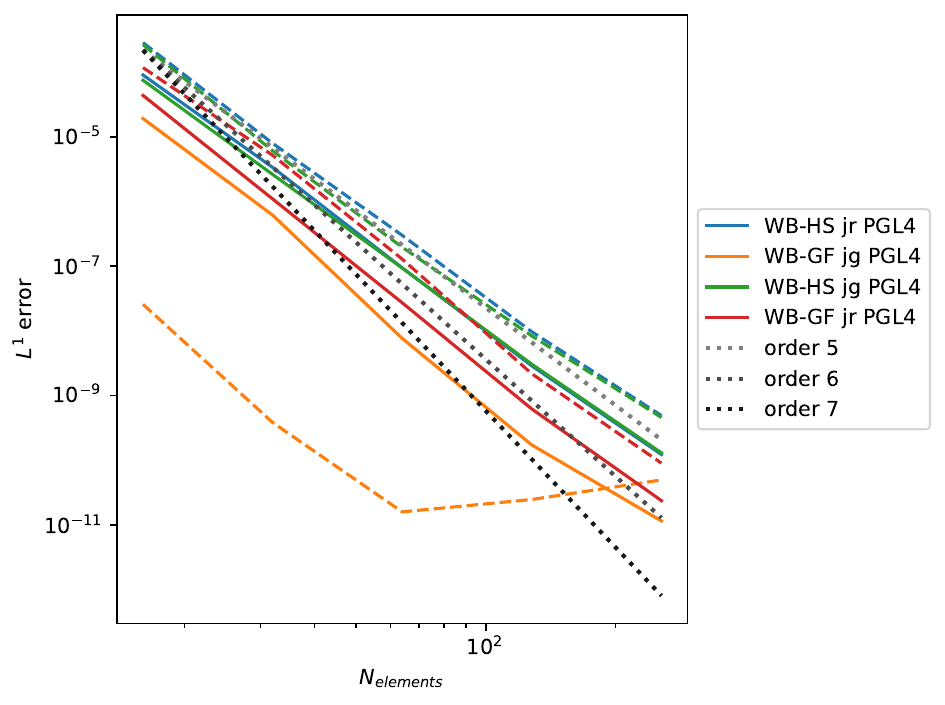}\caption{Subcritical}
			\end{subfigure}
			\begin{subfigure}{0.31\textwidth}
				\includegraphics[width=\textwidth]{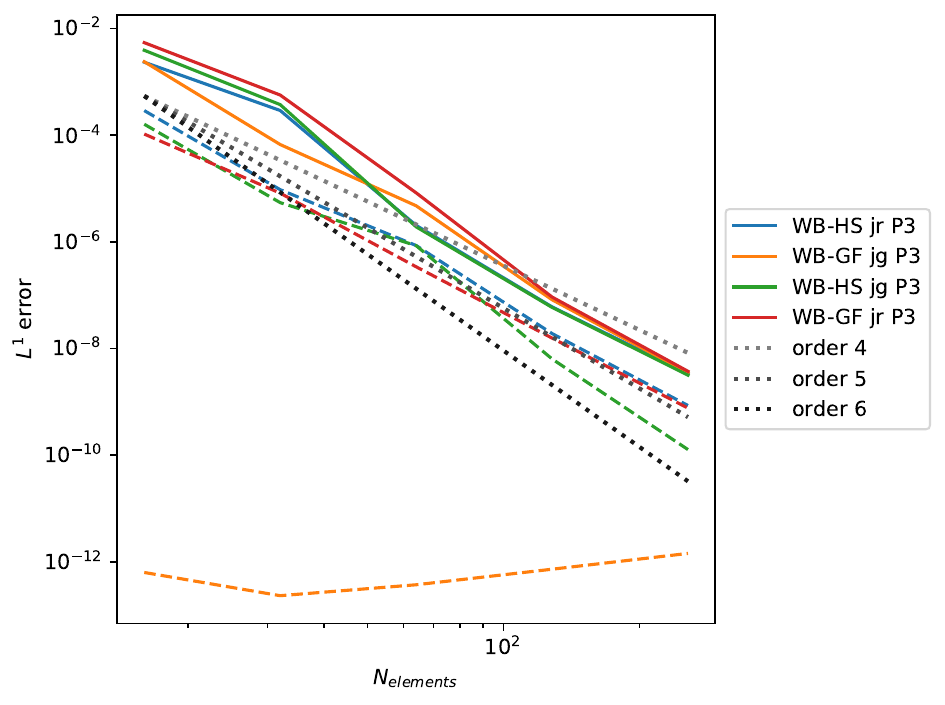}\caption{Transcritical}
			\end{subfigure}
			\caption{Convergence analysis: comparison between the best performing settings. $L^1$ error on $H$ in continuous line, on $q$ in dashed line}\label{fig:bestperforming}
		\end{figure}
		
		As already remarked, reporting the results for all the possible combinations of basis functions, space discretizations and jump stabilizations for any test would have been rather chaotic. For this reason, only the most significative ones have been selected. Nevertheless, we have tried, through several comparisons, to provide a wide variety of results for all the settings, focusing more on the best performing ones. Summarizing, the results seen in this subsection confirm the advantages in adopting the stabilizations jr \eqref{eq:jr} and jg \eqref{eq:jg} in the context of smooth steady states.

		\subsection{Evolution of small perturbations of lake at rest}\label{sub:pert_lake_at_rest}
		In this section, we test the ability of the WB space discretizations and stabilizations to capture the evolution of small perturbations of the lake at rest steady state.
		
		We consider again the reference test in Section \ref{sub:exwblatr} but we introduce the following small perturbation
		\begin{equation}
			\eta(x):=\begin{cases}
				\eta_{s}+A\exp{\left(1-\frac{1}{1-\left(\frac{x-6}{0.5}\right)^2}\right)}, &\text{if}~5.5<x<6.5,\\
				\eta_{s}, & \text{otherwise},
			\end{cases}
			\label{eq:perturbation_lake_at_rest}
		\end{equation}
		with $A:=5\cdot 10^{-5}$, where $\eta_{s}\equiv \overline{\eta}$ represents the total water height of the steady state.

		The initial condition and the evolution of the pertubation at the time $T_f:=1.5$, obtained with PGL4 and adopting non-WB settings, are depicted in Figure \ref{fig:latrnonwb}. 
		For each setting, two results are plotted, one obtained with a coarse mesh with $30$ elements, the other one obtained with a refined mesh with $128$ elements. 
		One can see that there is indeed an advantage in adopting a WB space discretization: in the context of the reference non-WB framework, the discretization error completely overwhelms the perturbation, while, in the other two cases, one gets spurious oscillations which are much smaller. 
		Nevertheless, the presence of such oscillations testifies the non-WB character of the schemes obtained by coupling a WB space discretization, WB-$\HS$ or WB-$\GF$, with a non-WB stabilization, jc.

		\begin{figure}
			\centering
			\begin{subfigure}{0.40\textwidth}
				\includegraphics[width=\textwidth]{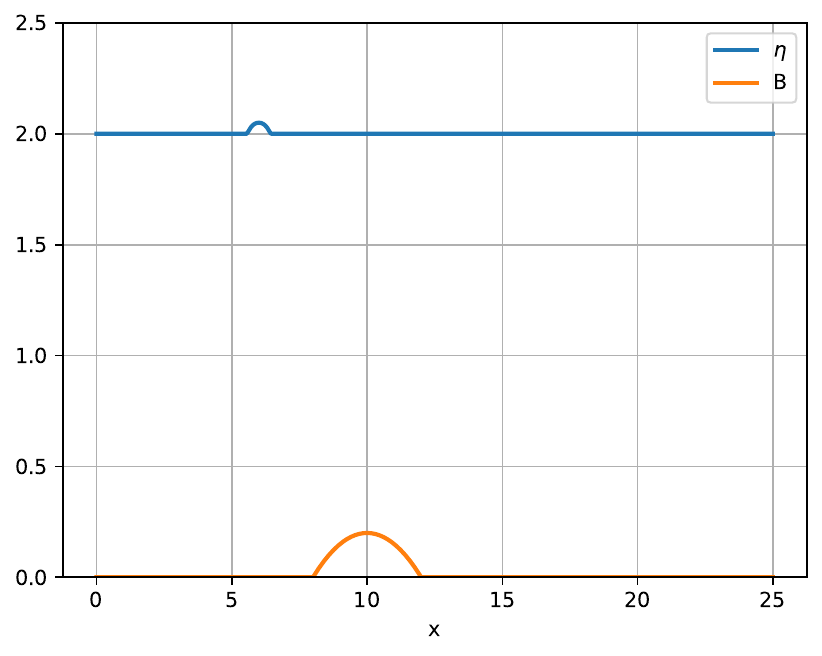}\caption{Initial total height and bathymetry. The perturbation is amplified by a factor $10^3$ in order to make it visible}
			\end{subfigure}\\
			\begin{subfigure}{0.31\textwidth}
				\includegraphics[width=\textwidth]{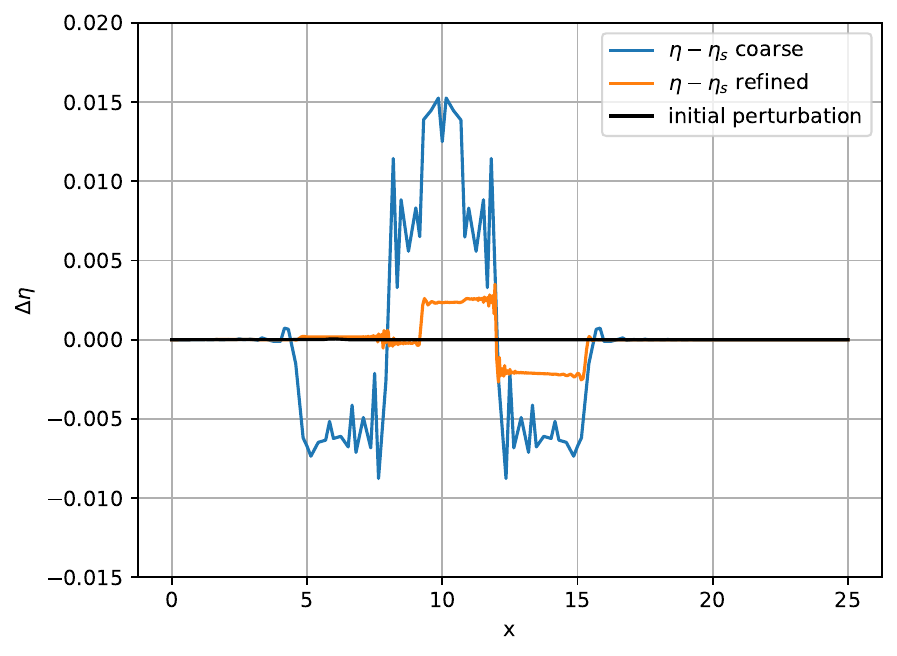}\caption{Reference non-WB setting}
			\end{subfigure}
			\begin{subfigure}{0.31\textwidth}
				\includegraphics[width=\textwidth]{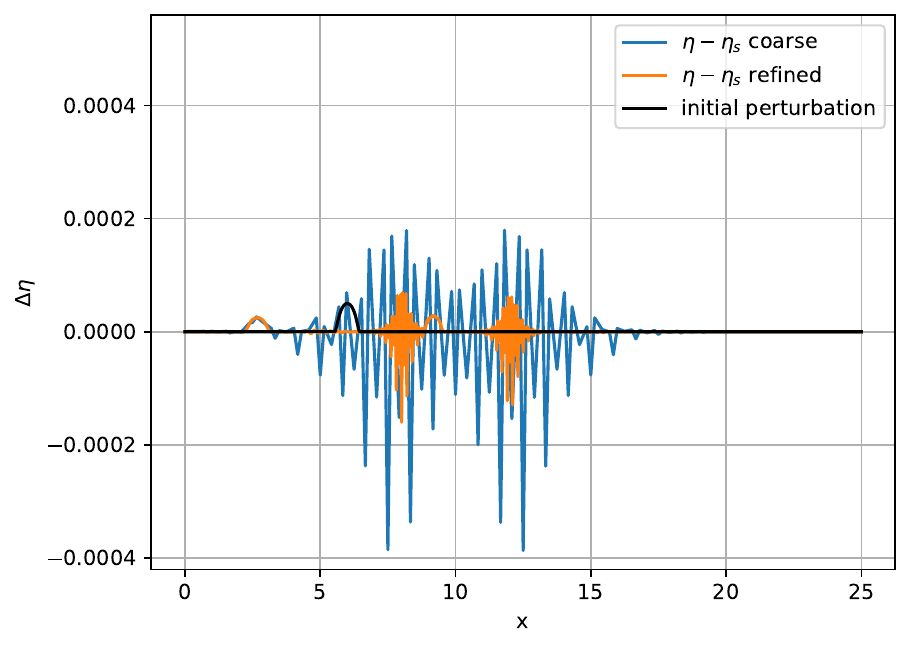}\caption{WB-$\HS$ with jc (non-WB)}
			\end{subfigure}
			\begin{subfigure}{0.31\textwidth}
				\includegraphics[width=\textwidth]{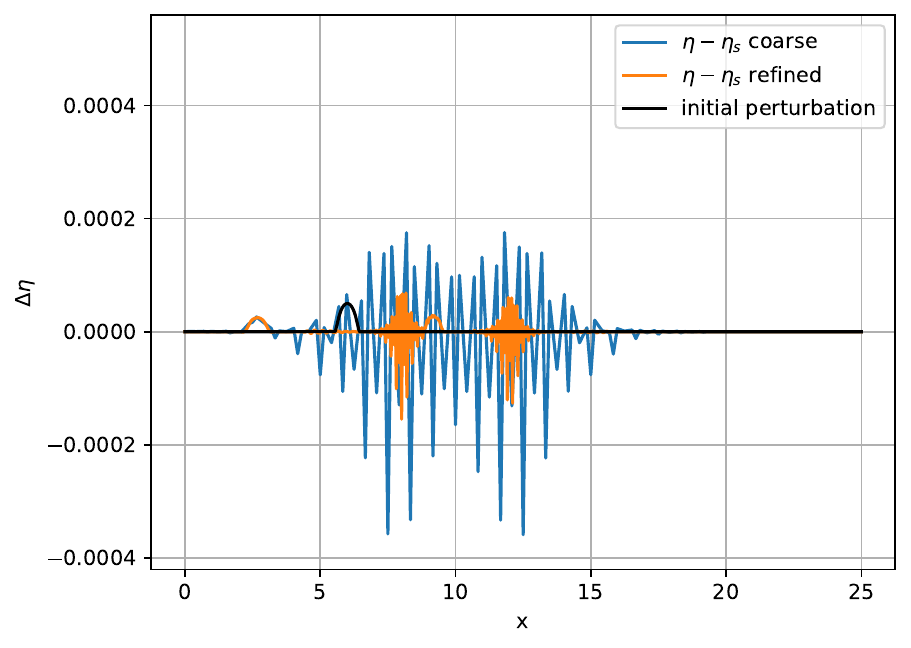}\caption{WB-$\GF$ with jc (non-WB)}
			\end{subfigure}
			\caption{Perturbation of lake at rest: initial condition and results obtained with non-WB settings. Results referred to PGL4 with $30$ elements for the coarse mesh and $128$ elements for the refined mesh. A different scale has been used for the reference non-WB setting}\label{fig:latrnonwb}
		\end{figure}
		
		In Figure \ref{fig:latrwb}, instead, one can see how the adoption of a fully WB scheme, i.e., for which also the stabilization is WB, is able to completely remove the spurious oscillations. 
		
		\begin{figure}
			\centering
			\begin{subfigure}{0.31\textwidth}
				\includegraphics[width=\textwidth]{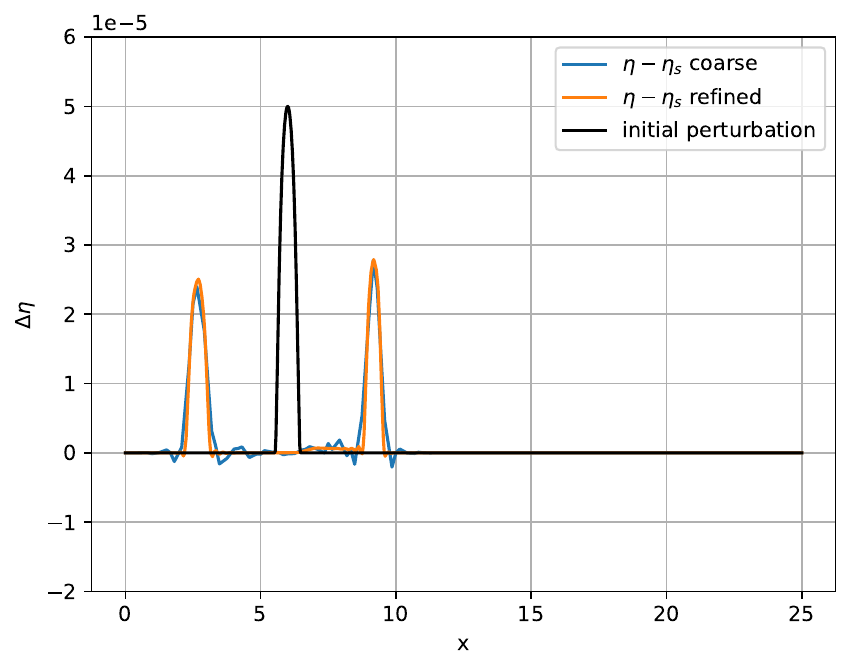}\caption{WB-$\HS$ with jt}
			\end{subfigure}
			\begin{subfigure}{0.31\textwidth}
				\includegraphics[width=\textwidth]{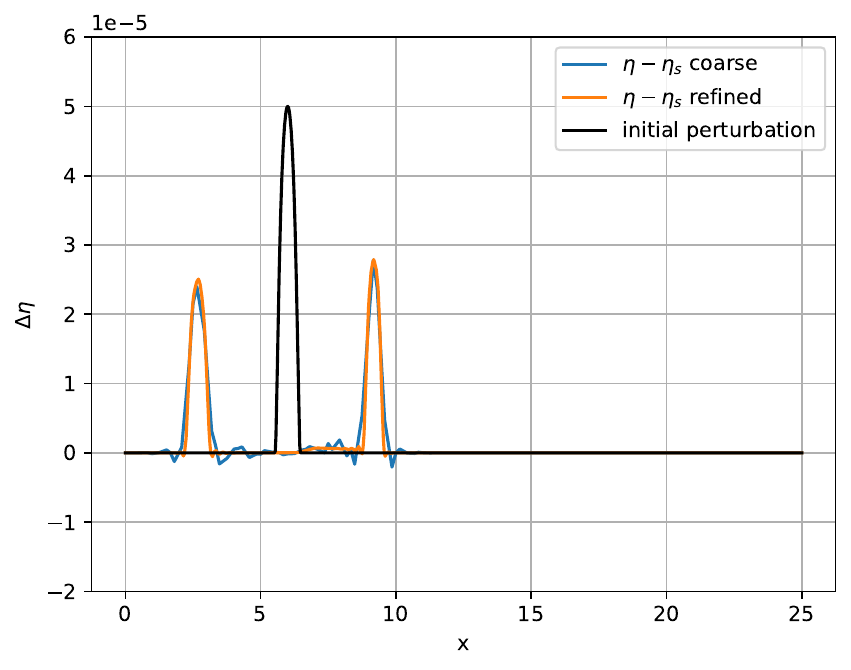}\caption{WB-$\HS$ with je}
			\end{subfigure}
			\begin{subfigure}{0.31\textwidth}
				\includegraphics[width=\textwidth]{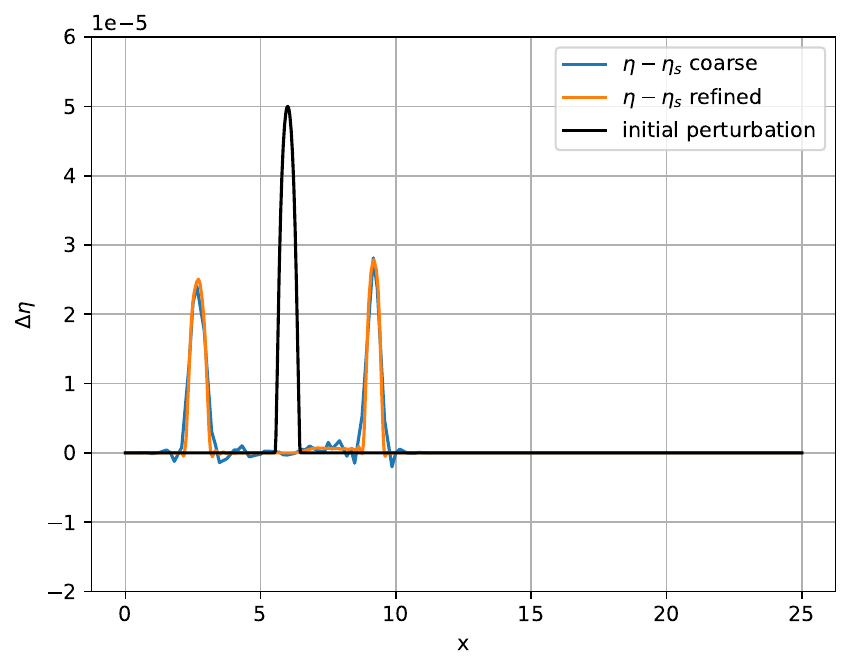}\caption{WB-$\HS$ with jr}
			\end{subfigure}\\
			\begin{subfigure}{0.31\textwidth}
				\includegraphics[width=\textwidth]{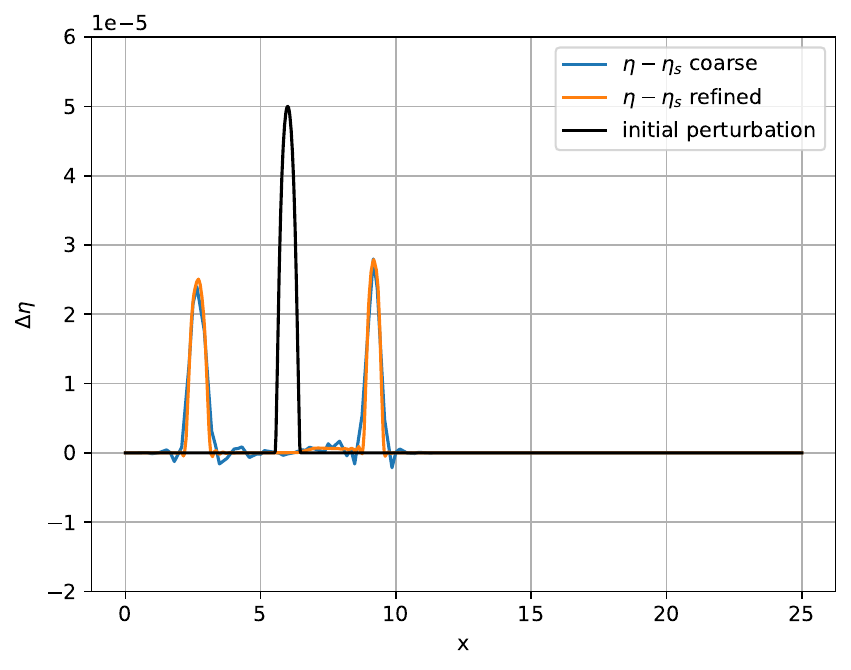}\caption{WB-$\GF$ with jt}
			\end{subfigure}
			\begin{subfigure}{0.31\textwidth}
				\includegraphics[width=\textwidth]{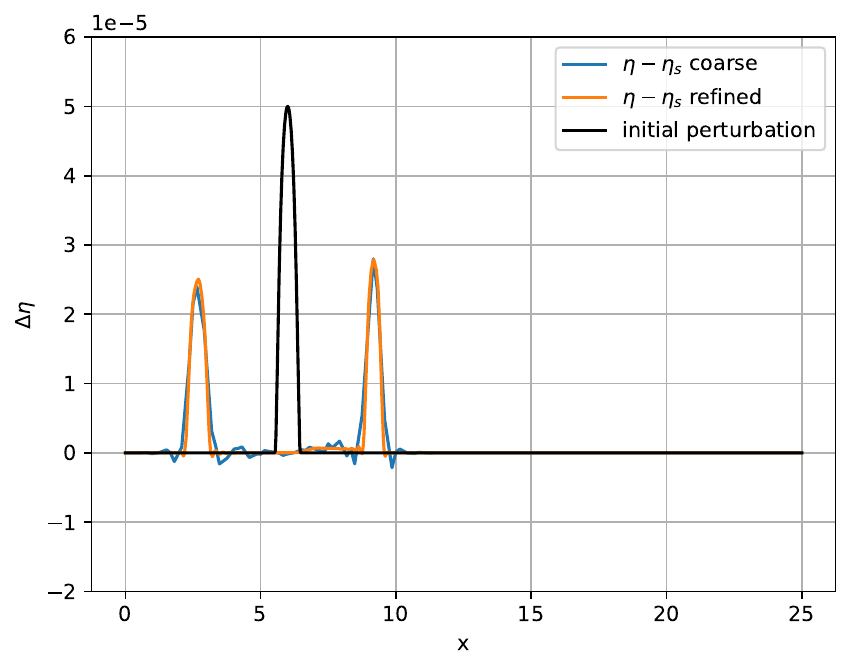}\caption{WB-$\GF$ with je}
			\end{subfigure}
			\begin{subfigure}{0.31\textwidth}
				\includegraphics[width=\textwidth]{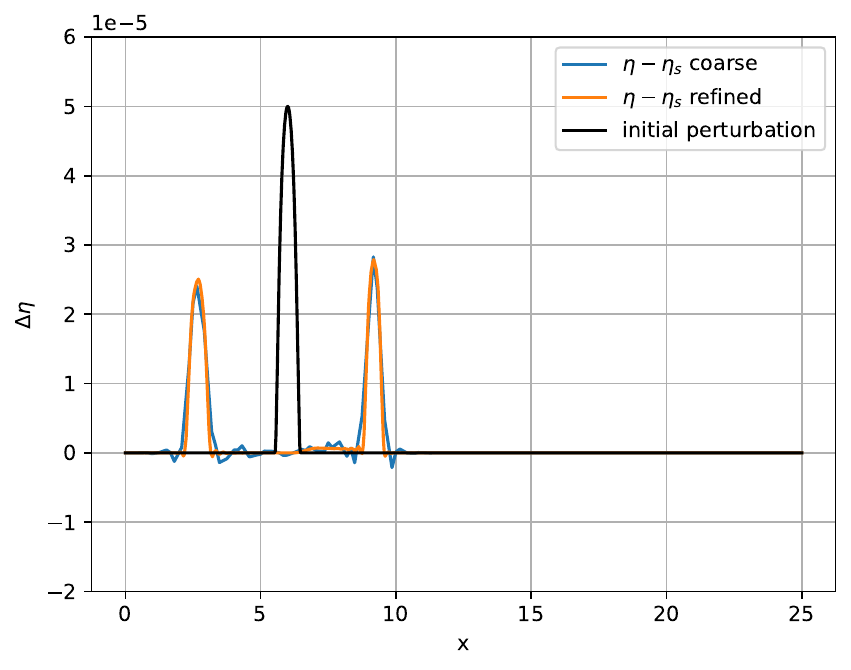}\caption{WB-$\GF$ with jg}
			\end{subfigure}
			\caption{
				Perturbation of lake at rest: WB settings. Results referred to PGL4 with $30$ elements for the coarse mesh and $128$ elements for the refined mesh
			}\label{fig:latrwb}
		\end{figure}
		
		Analogous results have been got for PGL$n$ with $n=1,2,3$. For what concerns B$n$ and P$n$, instead, the results are similar only up to order 3. For $n\geq 3$, the pathology of the time-stepping method for low order mass lumpings, already mentioned, prevents from recovering the formal order of accuracy without increasing the number of iterations with respect to what theoretically predicted.
		For the sake of compactness such results have been omitted.

		\subsection{Evolution of small perturbations of moving equilibria}\label{sub:pert_moving}
		In this section, we test the WB properties of the introduced elements with respect to general steady states not known in closed-form.
		We remark that the two WB space discretizations here adopted, as well as all the novel CIP stabilizations, have been designed ad hoc to exactly preserve the lake at rest steady state. However, the last two stabilizations, jr \eqref{eq:jr} and jg \eqref{eq:jg}, also address the problem of the preservation of general steady states, being based on discretizations of $\frac{\partial}{\partial x}\uvec{F}-\uvec{S}$. In fact, as already shown in the convergence analyses, such stabilizations are characterized by strong superconvergences towards steady states.
		This section is divided in two parts: in the first one we assume no friction, instead, in the second one we assume a Manning friction coefficient $n_M:=0.03.$

		\subsubsection{Tests without friction}
		We consider here the three non-smooth steady states characterized by the boundary conditions \eqref{eq:superBC}, \eqref{eq:subBC} and \eqref{eq:transBC} but with the $C^0$ bathymetry \eqref{eq:c0_bathymetry}. We will analyze them separately in the following. 
		Again, the water height can be retrieved via the (exact) solution of \eqref{eq:steady_no_friction}. 
		
		\begin{itemize}
			\item[•] \textbf{Supercritical flow}\\
			We consider in this case the same small perturbation \eqref{eq:perturbation_lake_at_rest} adopted for the lake at rest steady state but a different final time $T_f:=1$. Indeed, in this case $\eta_s$ is not constant. The initial condition and the results got with the non-WB (with respect to lake at rest) settings are reported in Figure \ref{fig:supnonwb}. Coherently with the previous case, the results are referred to PGL4 with $30$ and $128$ elements respectively for the coarse and the refined meshes. Again, there is a certain advantage in adopting the WB space discretizations, which seem to be more capable to handle a non-smooth bathymetry even for steady states different from the lake at rest. 
			However, still they are characterized by spurious oscillations and this is not surprising as the elements have not been designed to achieve well-balancing with respect to a general steady state.

			\begin{figure}
				\centering
				\begin{subfigure}{0.40\textwidth}
					\includegraphics[width=\textwidth]{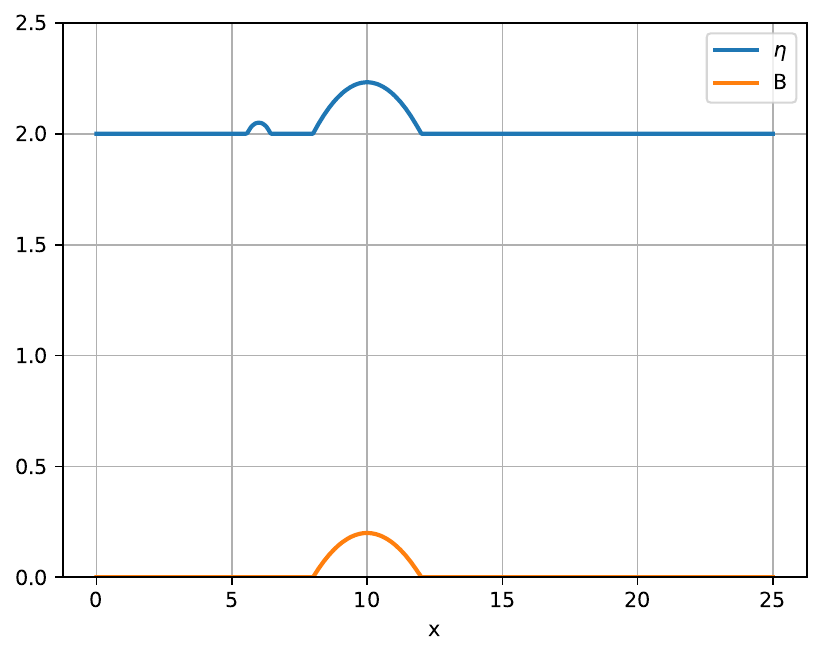}\caption{Initial total height and bathymetry. The perturbation is amplified by a factor $10^3$ in order to make it visible}
				\end{subfigure}\\
				\begin{subfigure}{0.31\textwidth}
					\includegraphics[width=\textwidth]{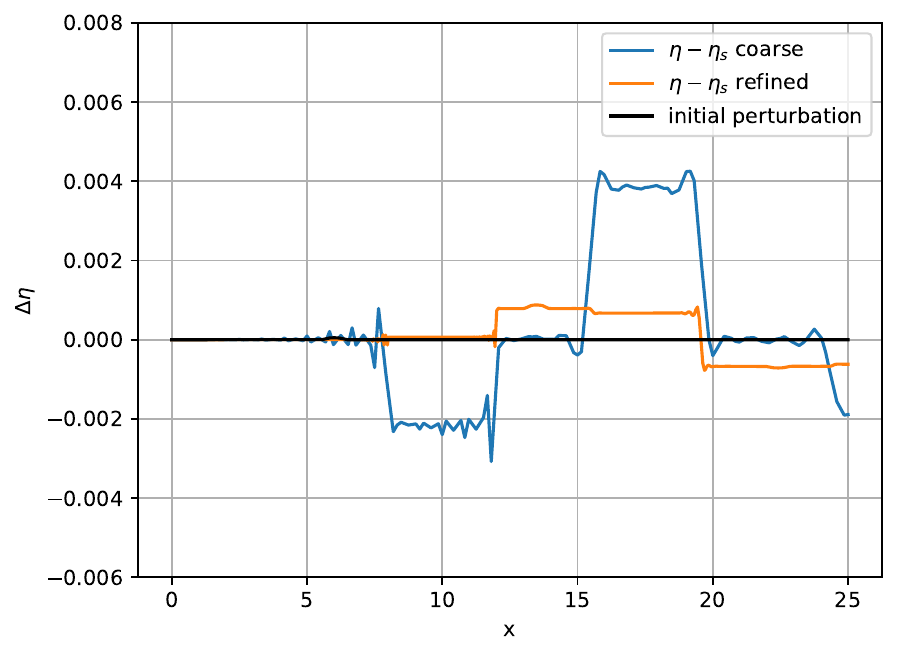}\caption{Reference non-WB setting}
				\end{subfigure}
				\begin{subfigure}{0.31\textwidth}
					\includegraphics[width=\textwidth]{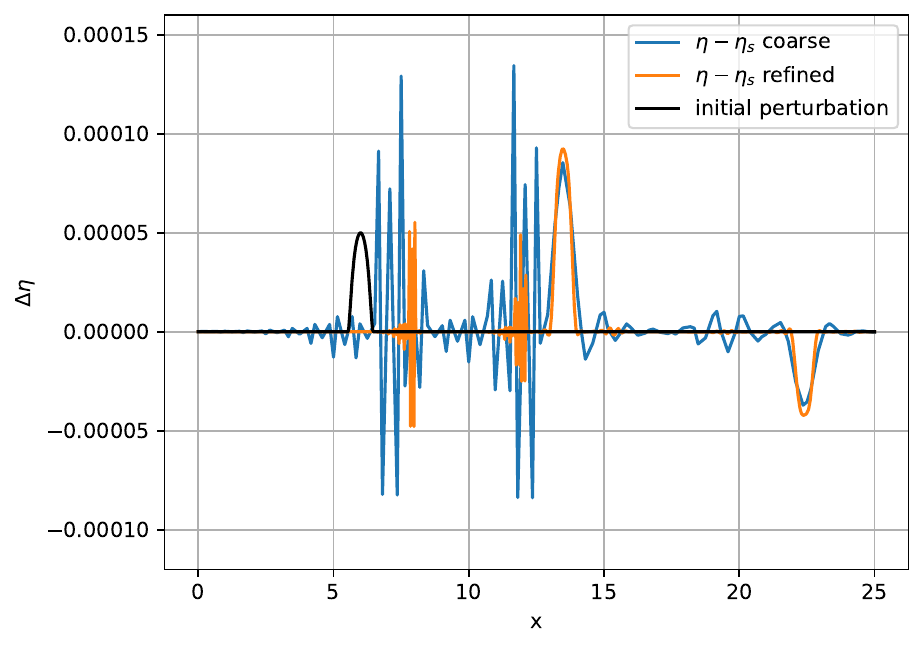}\caption{WB-$\HS$ with jc}
				\end{subfigure}
				\begin{subfigure}{0.31\textwidth}
					\includegraphics[width=\textwidth]{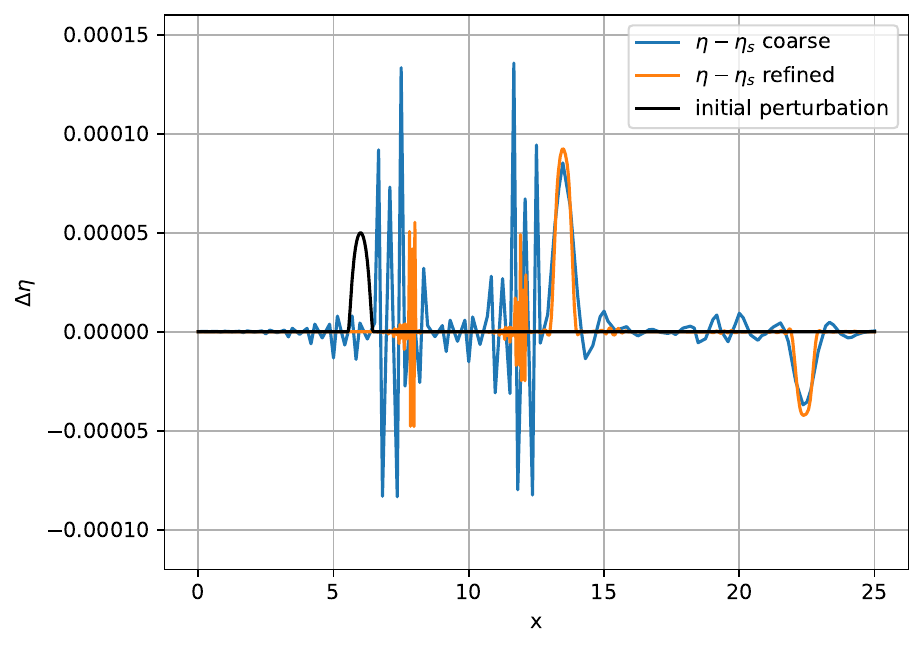}\caption{WB-$\GF$ with jc}
				\end{subfigure}
				\caption{Perturbation of non-smooth frictionless supercritical steady state: initial condition and results obtained with non-WB settings. Results referred to PGL4 with $30$ elements for the coarse mesh and $128$ elements for the refined mesh. A different scale has been used for the reference non-WB setting}\label{fig:supnonwb}
			\end{figure}

			In Figure \ref{fig:supwb}, we see the effect of the different stabilizations. It is immediately noticeable the ability of jr and jg to capture in a polite way the evolution of the perturbation without spurious oscillations. This feature can be somehow expected since, as already remarked, the two stabilizations are designed to stabilize the quantity $\frac{\partial}{\partial x}\uvec{F}-\uvec{S}$. Nevertheless, let us notice that no particular discretization has been adopted to make sure that they are exactly zero with respect to the investigated steady state. One can observe that very little spurious oscillations are present also for jr and jg in the results obtained with the coarse mesh, but this is normal, due to the lack of a limiting strategy. Such oscillations completely disappear in the mesh refinement.
			
			\begin{figure}
				\centering
				\begin{subfigure}{0.31\textwidth}
					\includegraphics[width=\textwidth]{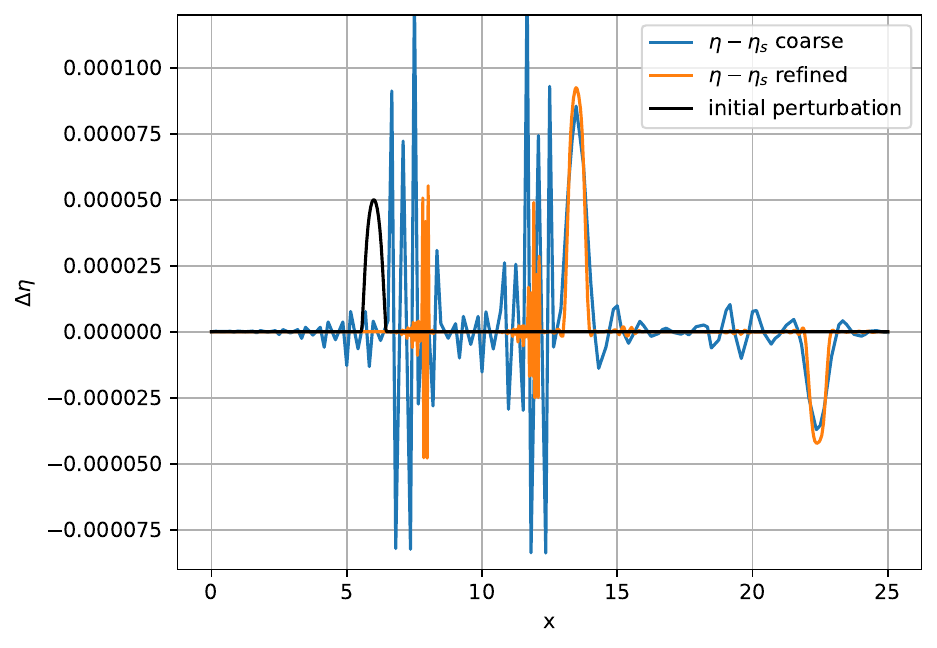}\caption{WB-$\HS$ with jc}
				\end{subfigure}
				\begin{subfigure}{0.31\textwidth}
					\includegraphics[width=\textwidth]{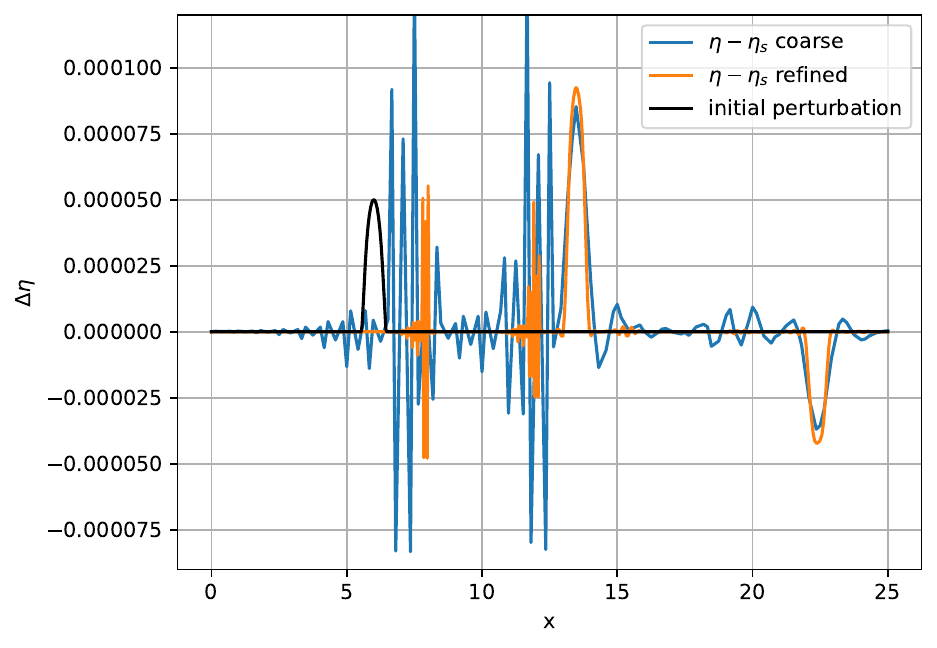}\caption{WB-$\GF$ with jc}
				\end{subfigure}\\
				\begin{subfigure}{0.31\textwidth}
					\includegraphics[width=\textwidth]{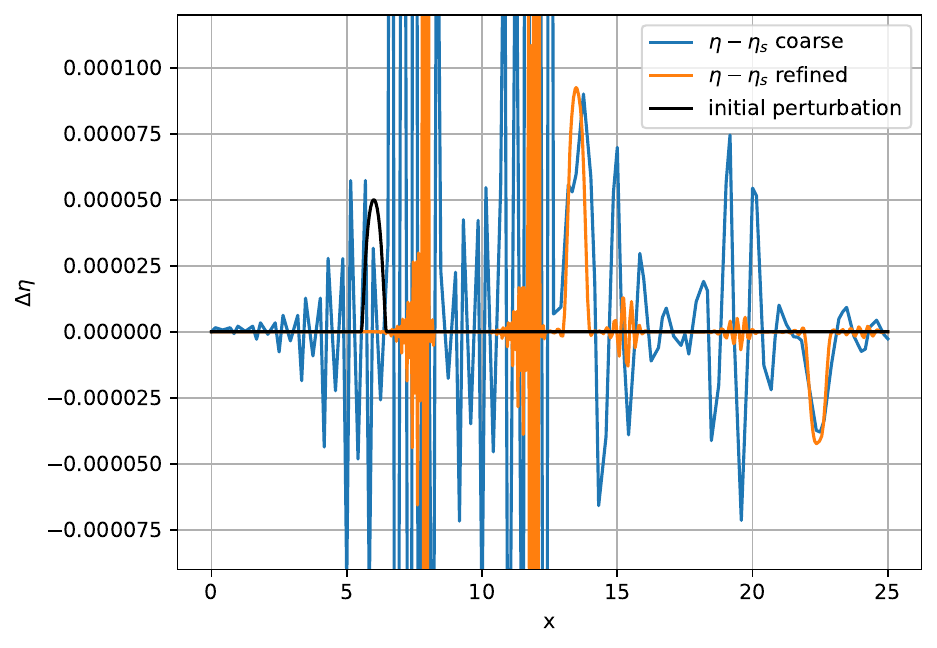}\caption{WB-$\HS$ with jt}
				\end{subfigure}
				\begin{subfigure}{0.31\textwidth}
					\includegraphics[width=\textwidth]{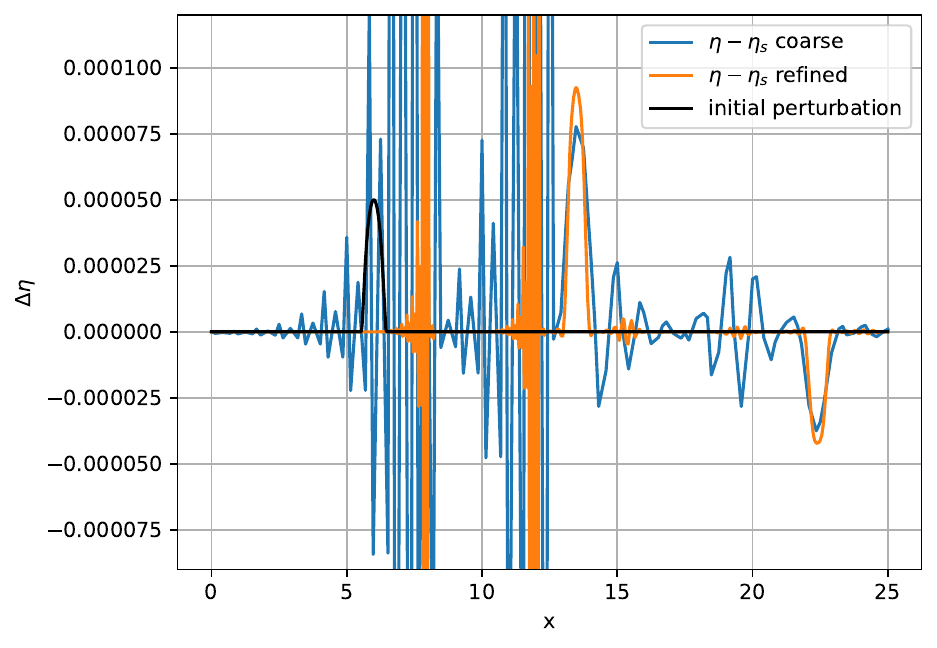}\caption{WB-$\HS$ with je}
				\end{subfigure}
				\begin{subfigure}{0.31\textwidth}
					\includegraphics[width=\textwidth]{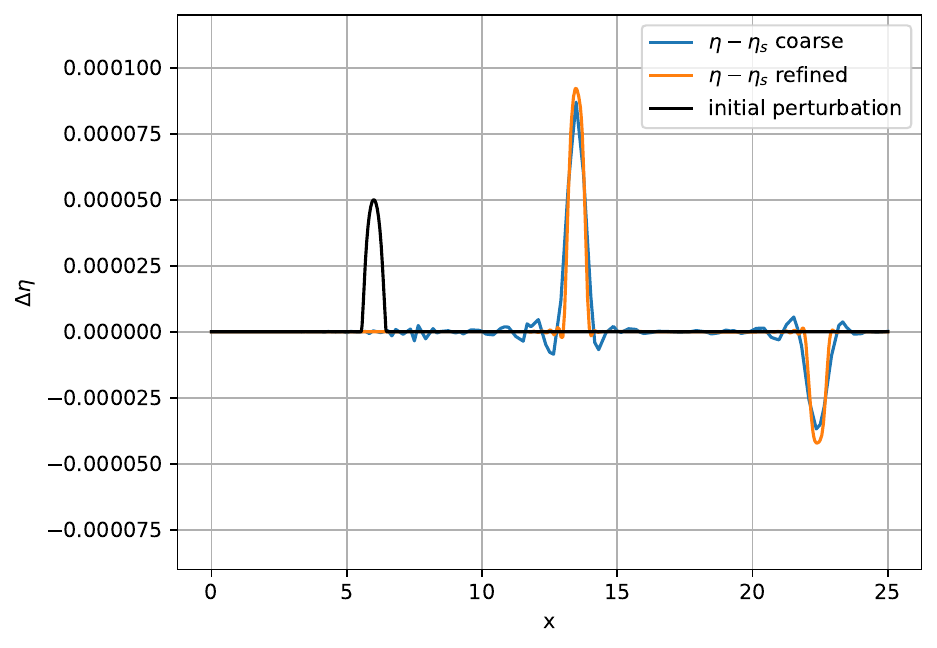}\caption{WB-$\HS$ with jr}
				\end{subfigure}\\
				\begin{subfigure}{0.31\textwidth}
					\includegraphics[width=\textwidth]{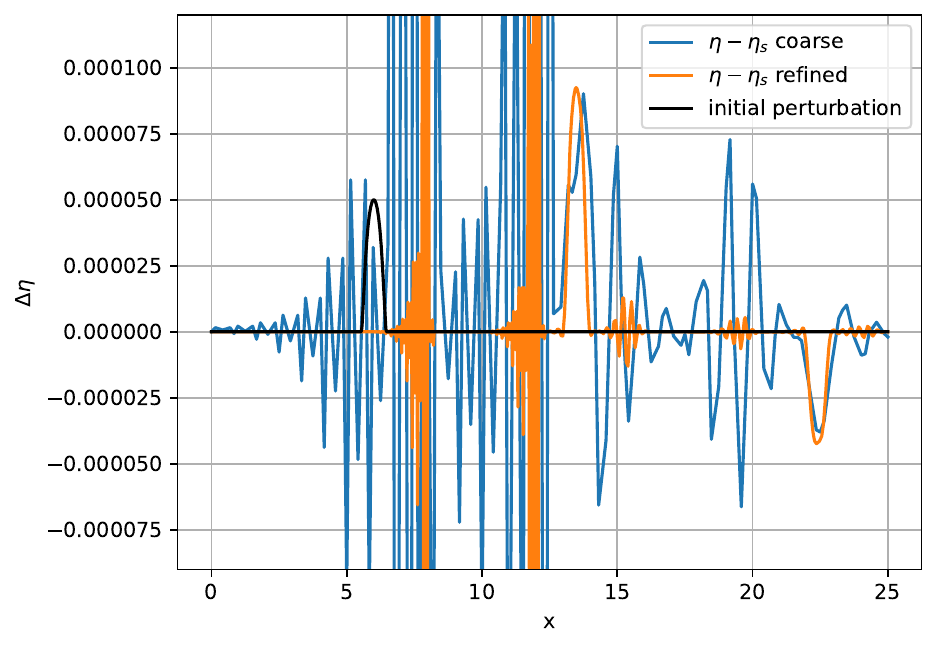}\caption{WB-$\GF$ with jt}
				\end{subfigure}
				\begin{subfigure}{0.31\textwidth}
					\includegraphics[width=\textwidth]{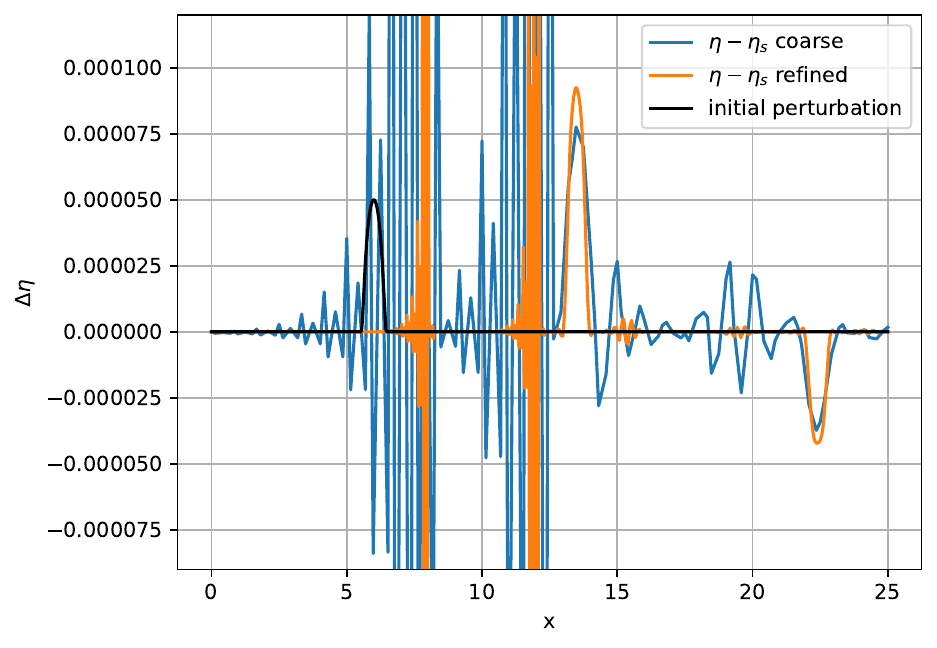}\caption{WB-$\GF$ with je}
				\end{subfigure}
				\begin{subfigure}{0.31\textwidth}
					\includegraphics[width=\textwidth]{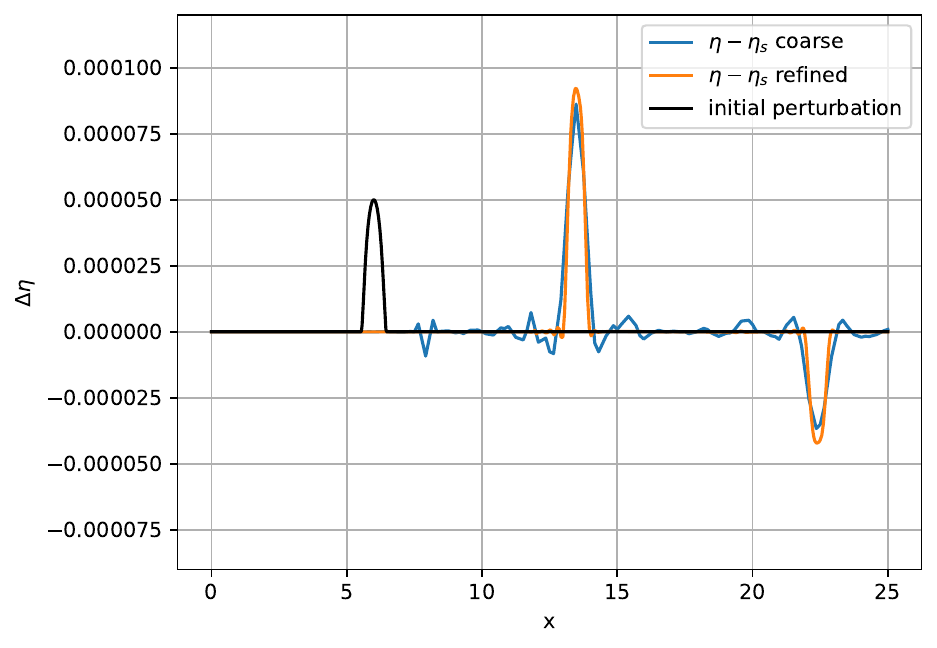}\caption{WB-$\GF$ with jg}
				\end{subfigure}
				\caption{Perturbation of non-smooth frictionless supercritical steady state: comparison between the different stabilizations. Results referred to PGL4 with $30$ elements for the coarse mesh and $128$ elements for the refined mesh}\label{fig:supwb}
			\end{figure}
			
			An interesting ``fair'' comparison between basis functions with different degrees is displayed in Figure \ref{fig:supwb_compare_PGLn}. The number of elements in the coarse meshes has been chosen in such a way that the total number of DoFs is constant. One can clearly see the effect of increasing the order of accuracy in the diminishing of the spurious oscillations both in number and magnitude, as well as in the better capturing of the peaks.
			
			\begin{figure}
				\centering
				\begin{subfigure}{0.31\textwidth}
					\includegraphics[width=\textwidth]{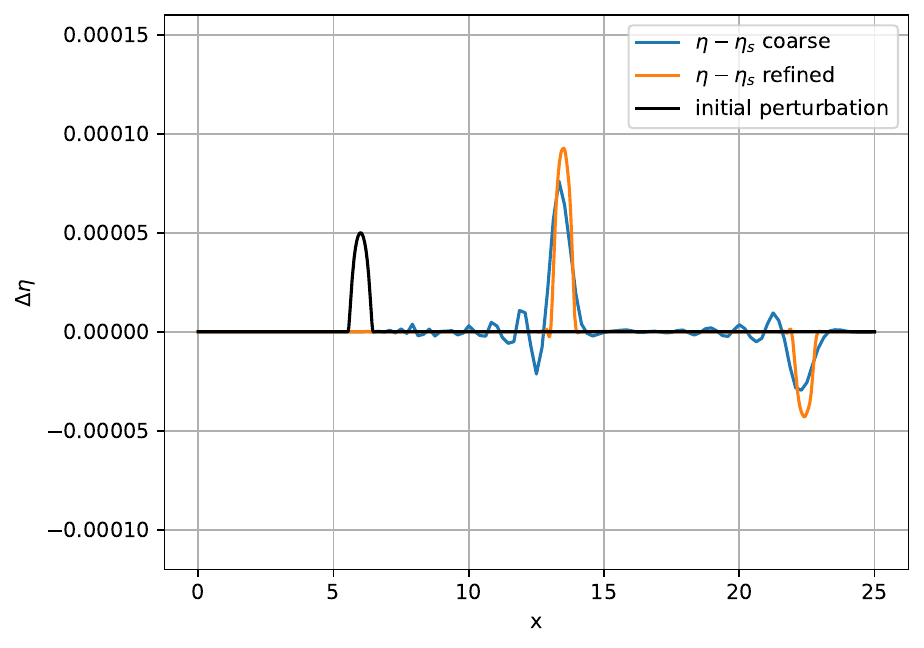}\caption{WB-$\HS$ with jr, PGL2}
				\end{subfigure}
				\begin{subfigure}{0.31\textwidth}
					\includegraphics[width=\textwidth]{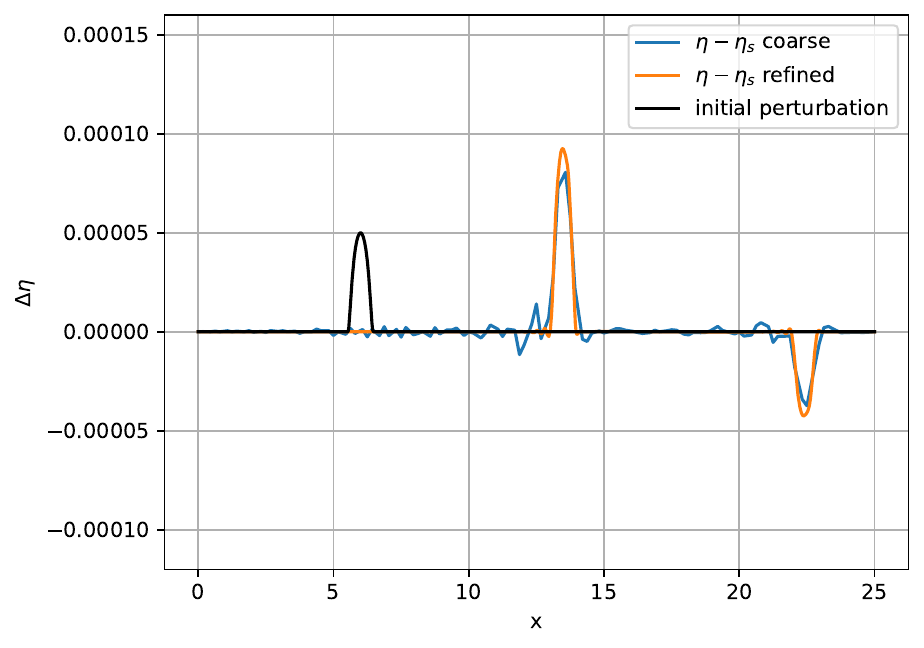}\caption{WB-$\HS$ with jr, PGL3}
				\end{subfigure}
				\begin{subfigure}{0.31\textwidth}
					\includegraphics[width=\textwidth]{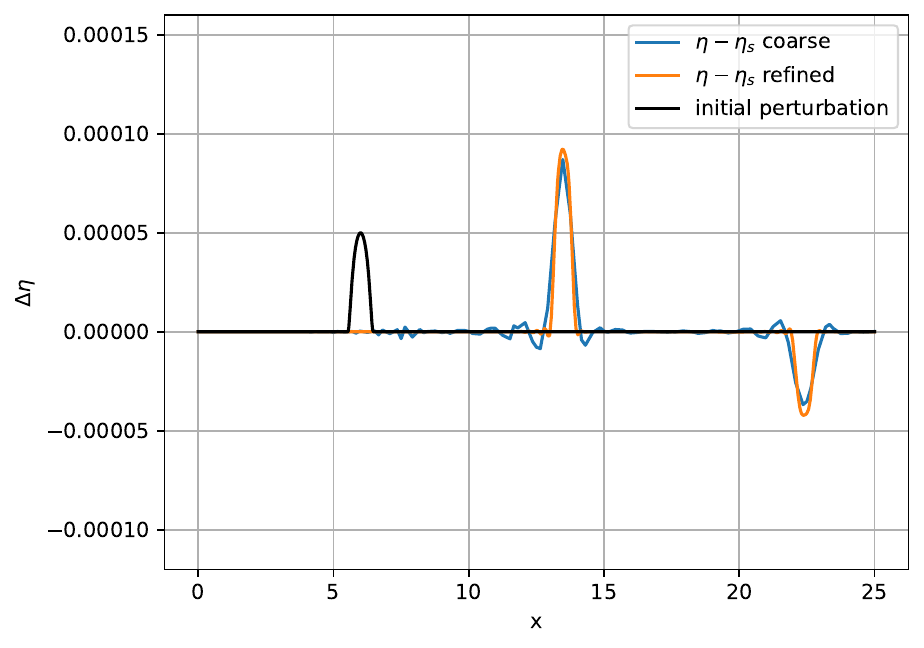}\caption{WB-$\HS$ with jr, PGL4}
				\end{subfigure}\\
				\begin{subfigure}{0.31\textwidth}
					\includegraphics[width=\textwidth]{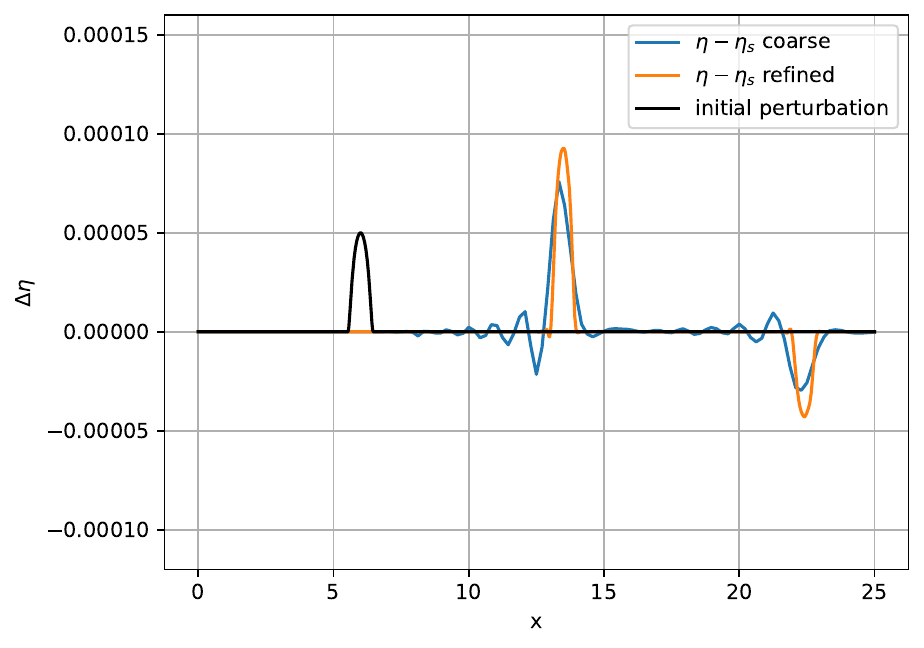}\caption{WB-$\GF$ with jg, PGL2}
				\end{subfigure}
				\begin{subfigure}{0.31\textwidth}
					\includegraphics[width=\textwidth]{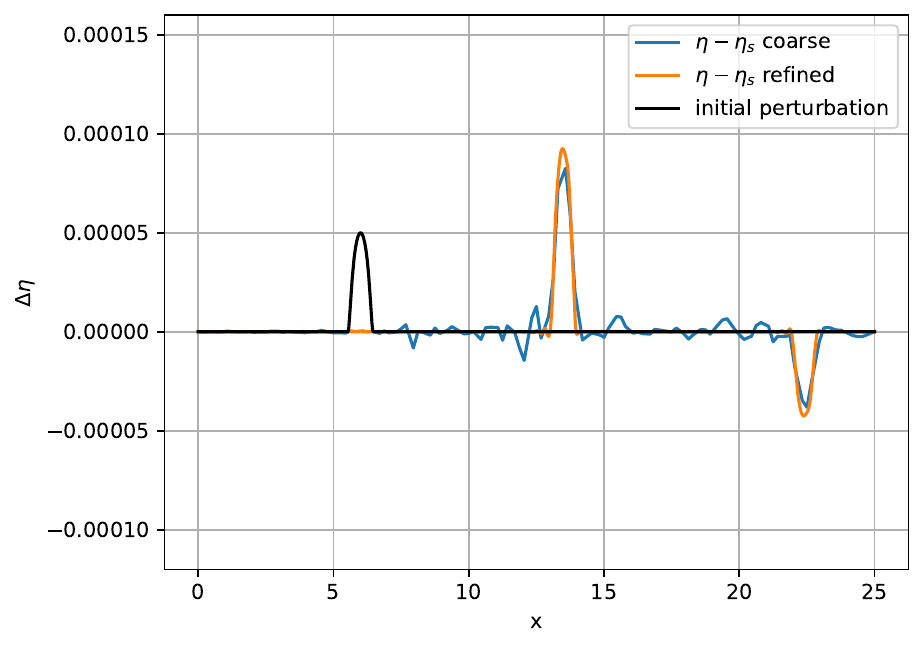}\caption{WB-$\GF$ with jg, PGL3}
				\end{subfigure}
				\begin{subfigure}{0.31\textwidth}
					\includegraphics[width=\textwidth]{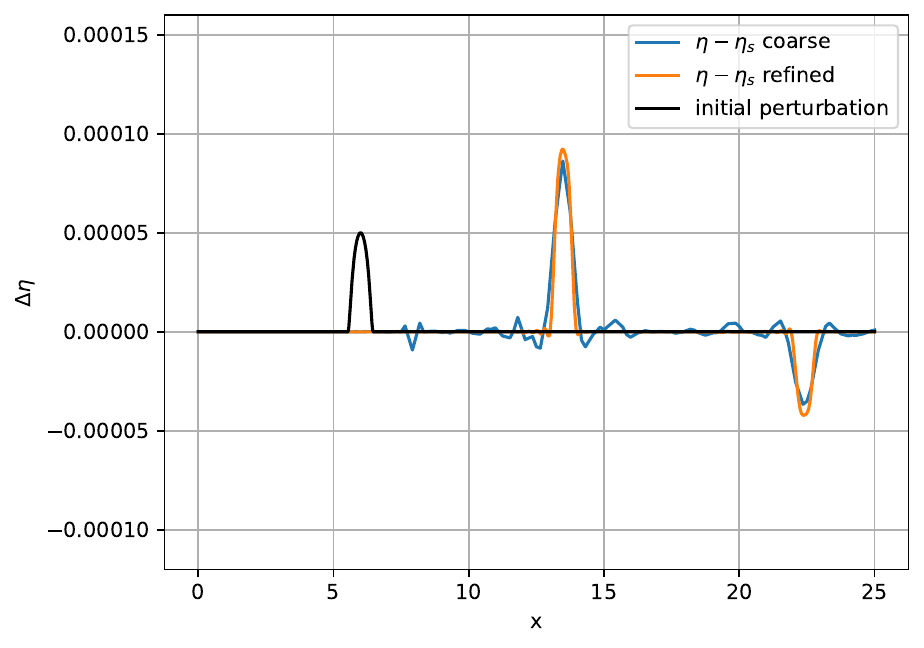}\caption{WB-$\GF$ with jg, PGL4}
				\end{subfigure}
				\caption{Perturbation of non-smooth frictionless supercritical steady state: fair comparison between basis functions of different degree with jr and jg. Respectively $30$, $40$ and $60$ elements for PGL4, PGL3 and PGL2 for the coarse meshes and $128$, $256$ and $512$ elements for the refined ones}\label{fig:supwb_compare_PGLn}
			\end{figure}

			\RIcolor{
				The ability to nicely capture the evolution of the perturbation even with order 3 on a coarse mesh should not be taken for granted: 
				the results obtained for WB-HS and WB-GF with jc, jt and je, for the same order and mesh resolution, are characterized by spurious oscillations, due to the discretization error, which completely overwhelm the dynamics of the perturbation. Such results have been omitted in order to improve the readability of this section.\\
			}

			\item[•] \textbf{Subcritical flow}\\
			The results got in this context are qualitatively similar to the ones obtained in the previous test, up to the fact that in this case we choose the same perturbation \eqref{eq:perturbation_lake_at_rest} but with $A:=5\cdot 10^{-4}$ and a final time $T_f:=1.5$. 
			
			Again, we start by showing, in Figure \ref{fig:subnonwb}, the unsatisfactory results that one gets in the context of the reference non-WB framework and with the two WB space discretizations coupled with the original stabilization jc. However, also in this case we underline how the WB space discretizations are definitely more suitable when one has to deal with a non-smooth bathymetry. Another common feature shared with the supercritical case is the presence of spurious oscillations due to the lack of any particular attention to well-balancing toward a general steady state.

			\begin{figure}
				\centering
				\begin{subfigure}{0.40\textwidth}
					\includegraphics[width=\textwidth]{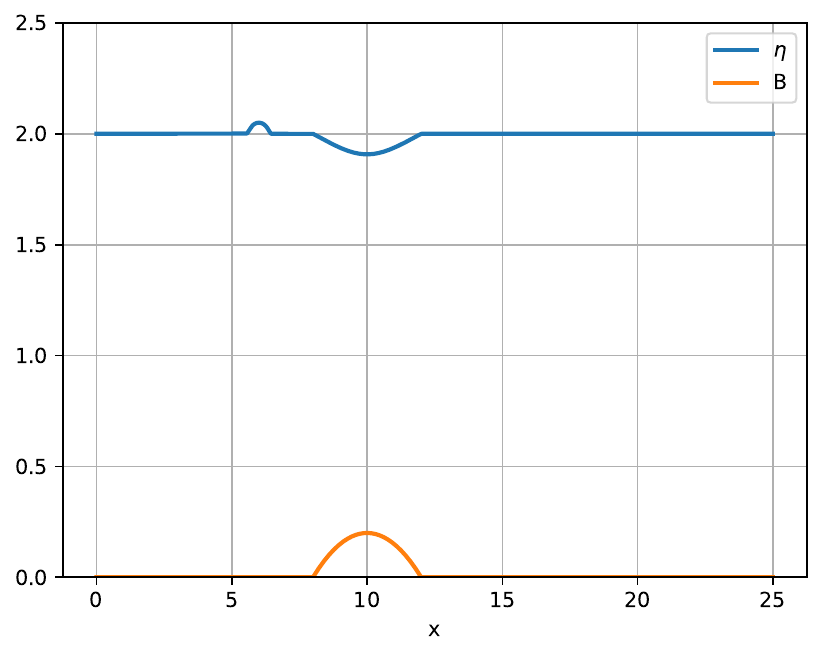}\caption{Initial total height and bathymetry. The perturbation is amplified by a factor $10^2$ in order to make it visible}
				\end{subfigure}\\
				\begin{subfigure}{0.31\textwidth}
					\includegraphics[width=\textwidth]{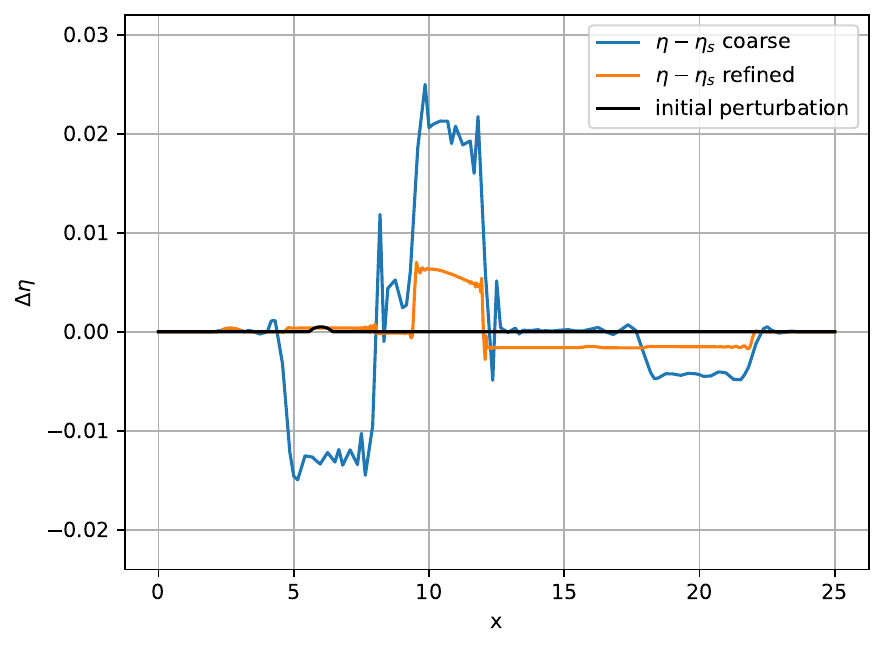}\caption{Reference non-WB setting}
				\end{subfigure}
				\begin{subfigure}{0.31\textwidth}
					\includegraphics[width=\textwidth]{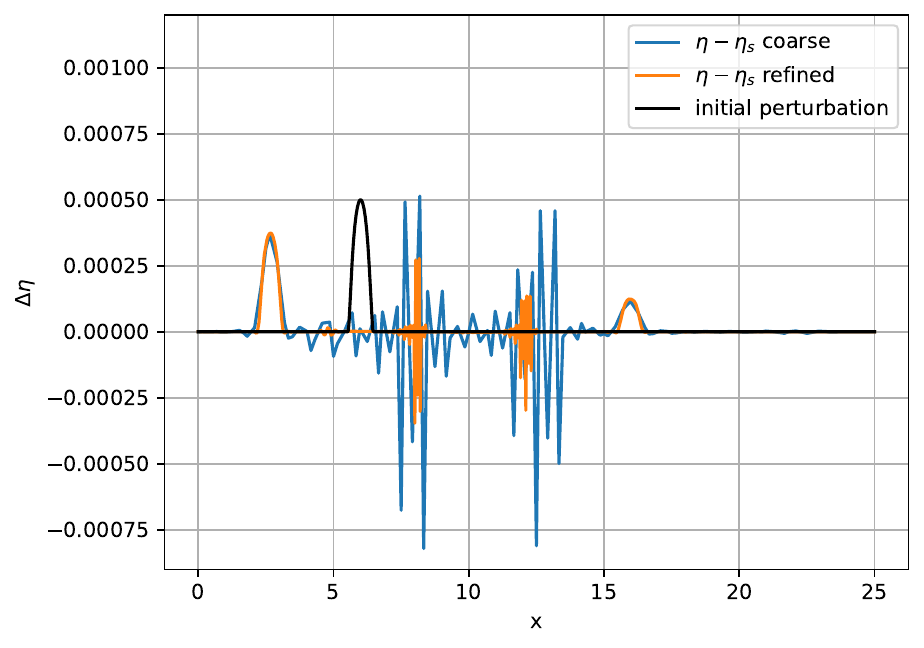}\caption{WB-$\HS$ with jc}
				\end{subfigure}
				\begin{subfigure}{0.31\textwidth}
					\includegraphics[width=\textwidth]{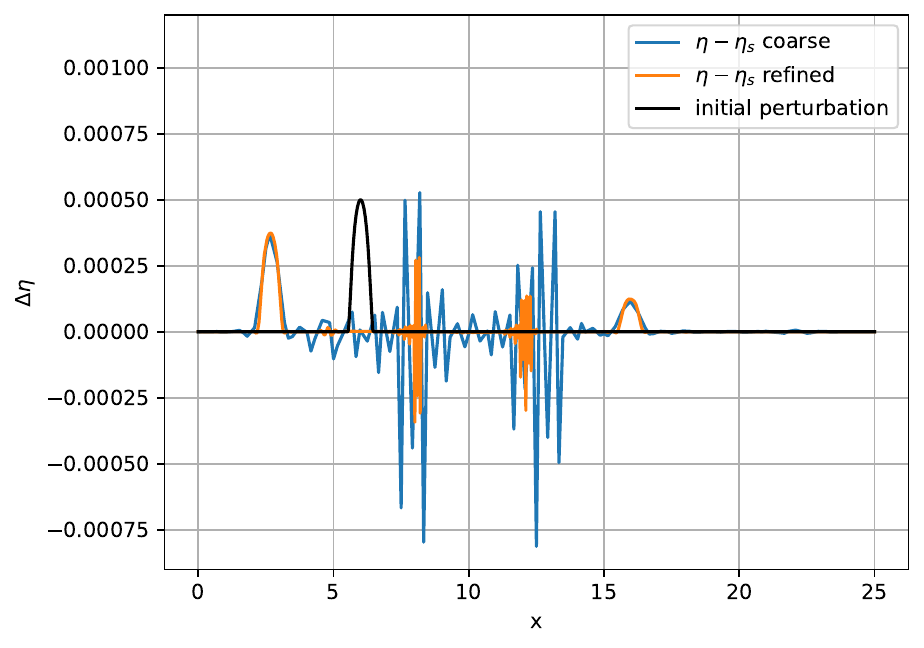}\caption{WB-$\GF$ with jc}
				\end{subfigure}
				\caption{Perturbation of non-smooth frictionless subcritical steady state: initial condition and results obtained with non-WB settings. Results referred to PGL4 with $30$ elements for the coarse mesh and $128$ elements for the refined mesh. A different scale has been used for the reference non-WB setting}\label{fig:subnonwb}
			\end{figure}

			The advantages of adopting an approach oriented towards the preservation of a general steady state can bee seen in Figure \ref{fig:subwb}. Again, the WB stabilizations jr and jg manage to remove almost completely the non-physical oscillations, which totally disappear in the mesh refinement. Also in this case, we remark that no limiting strategy has been adopted and this is the reason for the little fluctuations that one can see in the results associated to the coarse meshes.

			\begin{figure}
				\centering
				\begin{subfigure}{0.31\textwidth}
					\includegraphics[width=\textwidth]{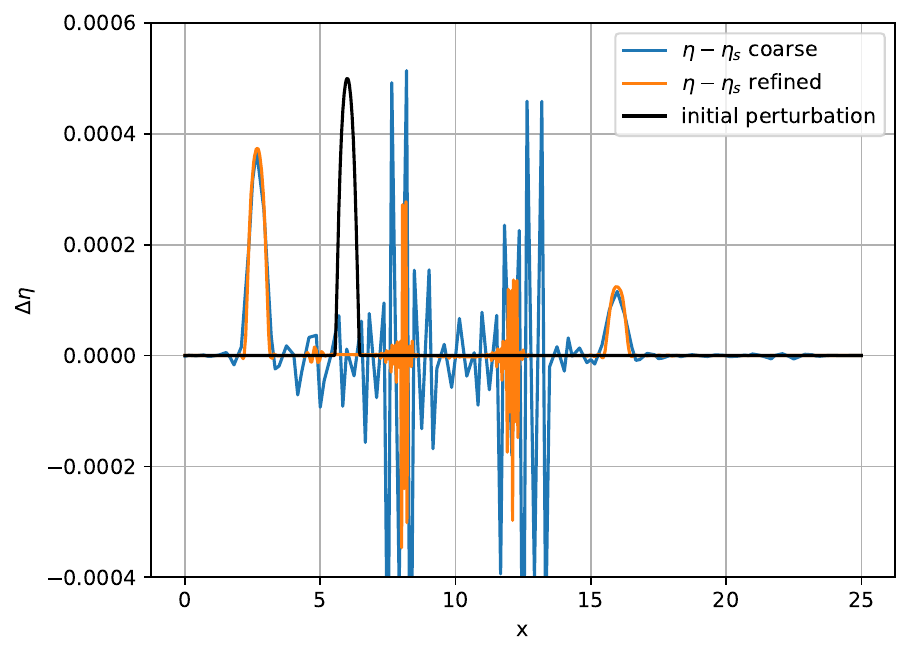}\caption{WB-$\HS$ with jc}
				\end{subfigure}
				\begin{subfigure}{0.31\textwidth}
					\includegraphics[width=\textwidth]{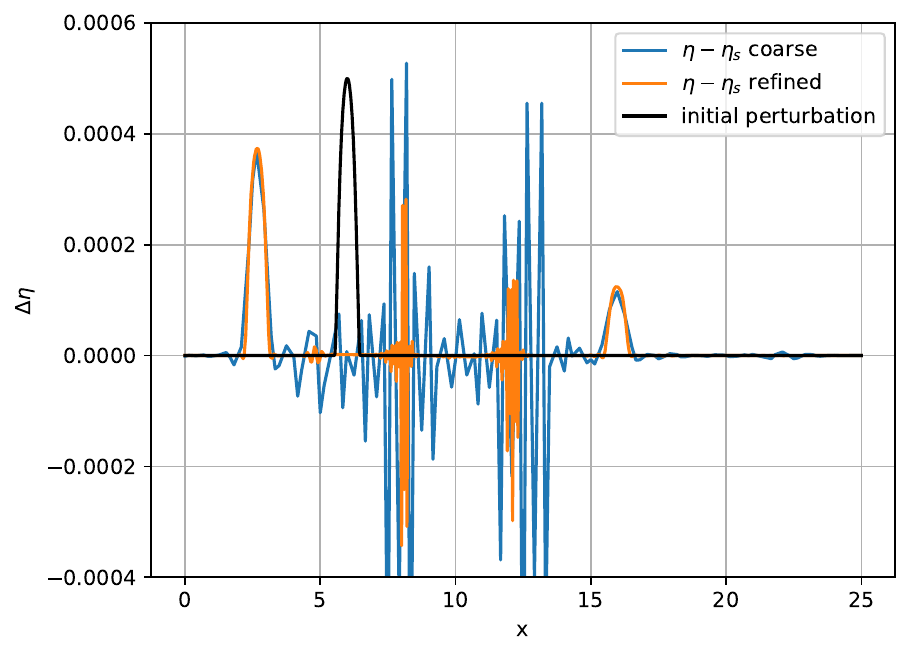}\caption{WB-$\GF$ with jc}
				\end{subfigure}\\
				\begin{subfigure}{0.31\textwidth}
					\includegraphics[width=\textwidth]{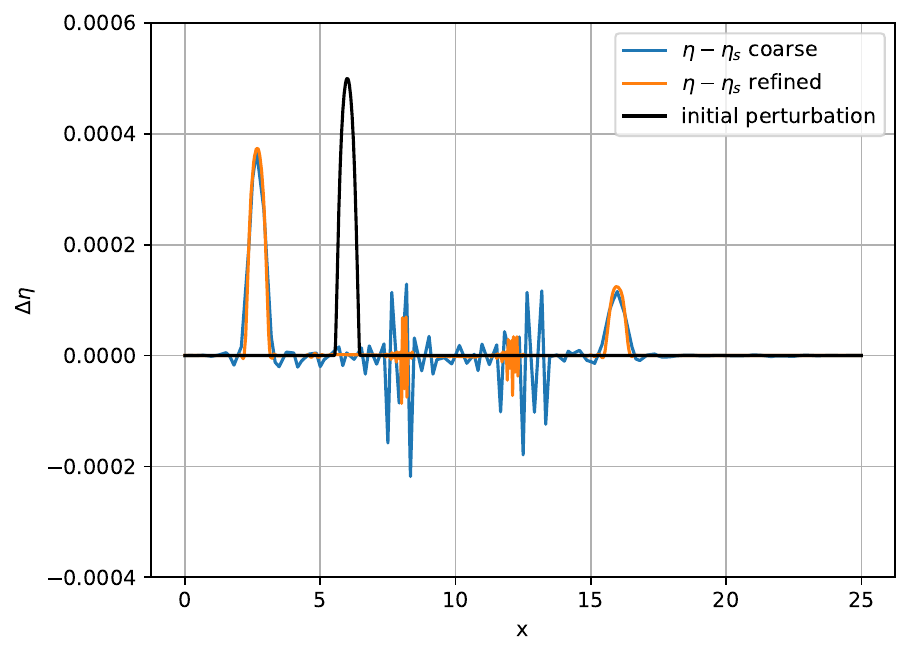}\caption{WB-$\HS$ with jt}
				\end{subfigure}
				\begin{subfigure}{0.31\textwidth}
					\includegraphics[width=\textwidth]{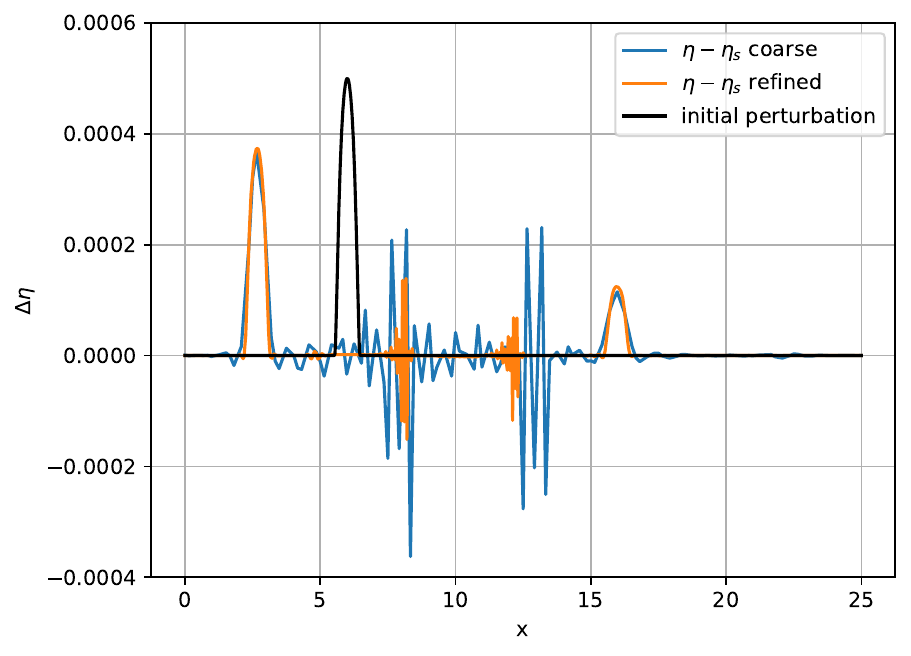}\caption{WB-$\HS$ with je}
				\end{subfigure}
				\begin{subfigure}{0.31\textwidth}
					\includegraphics[width=\textwidth]{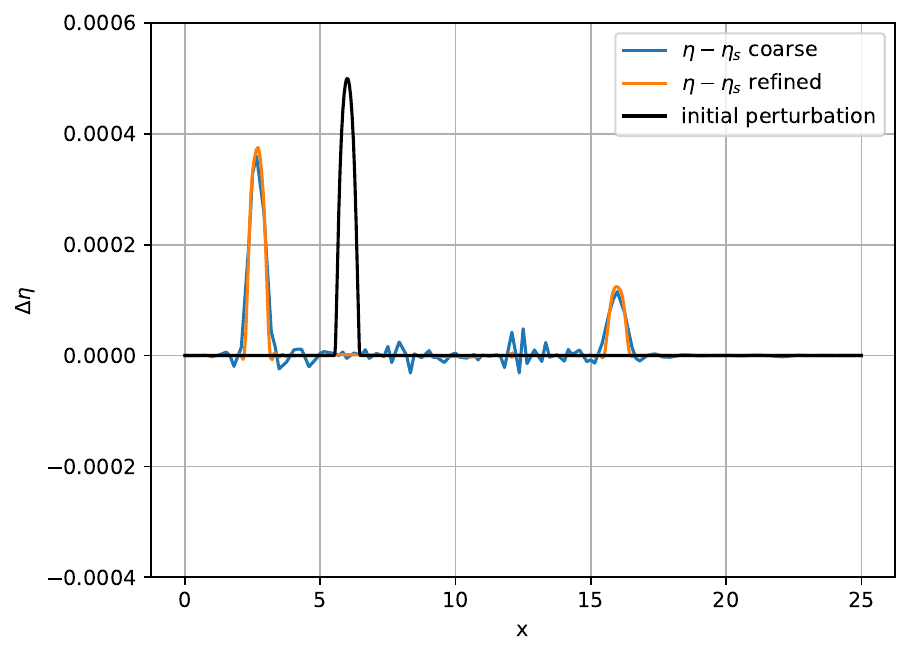}\caption{WB-$\HS$ with jr}
				\end{subfigure}\\
				\begin{subfigure}{0.31\textwidth}
					\includegraphics[width=\textwidth]{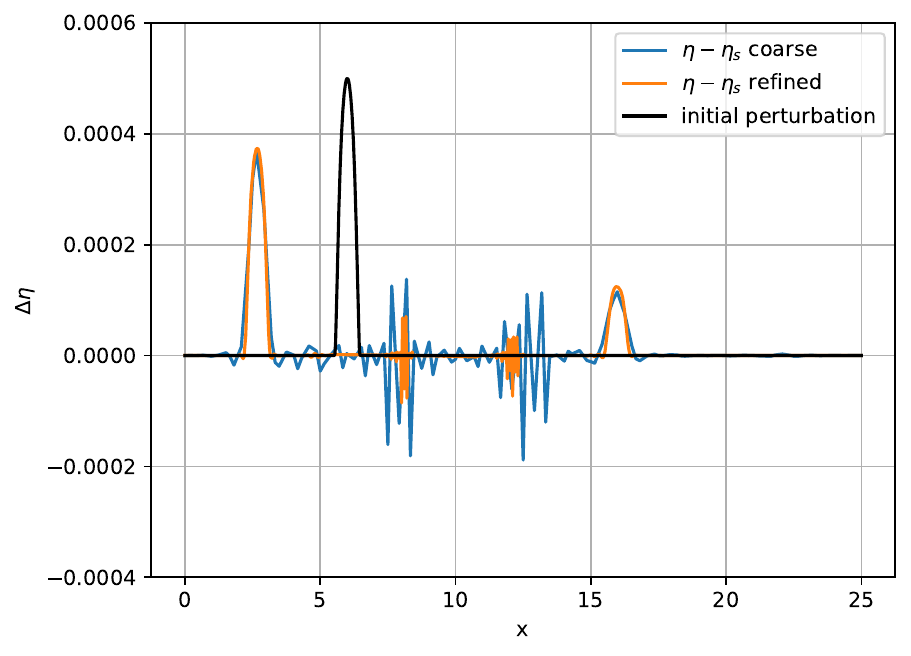}\caption{WB-$\GF$ with jt}
				\end{subfigure}
				\begin{subfigure}{0.31\textwidth}
					\includegraphics[width=\textwidth]{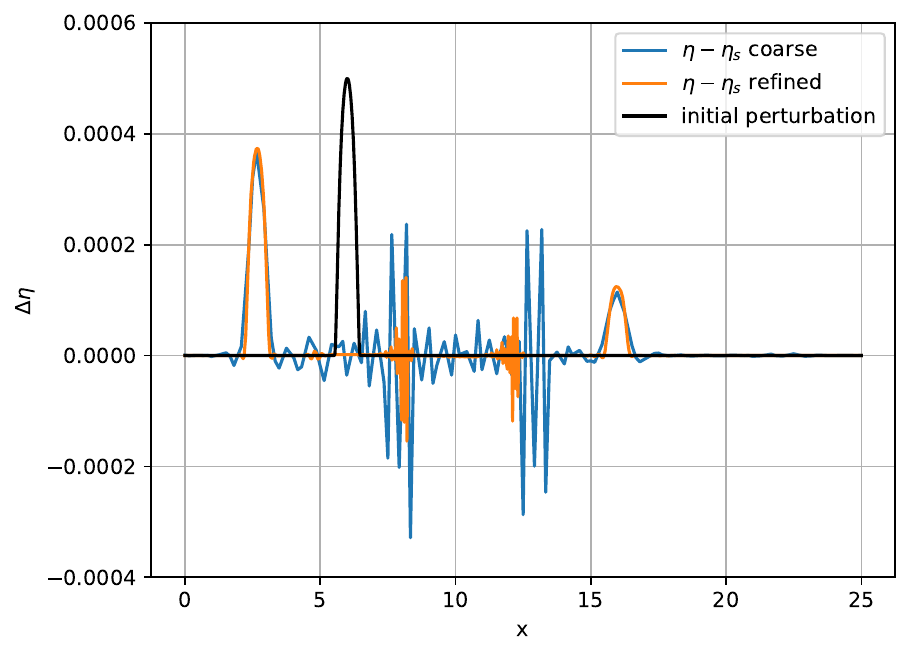}\caption{WB-$\GF$ with je}
				\end{subfigure}
				\begin{subfigure}{0.31\textwidth}
					\includegraphics[width=\textwidth]{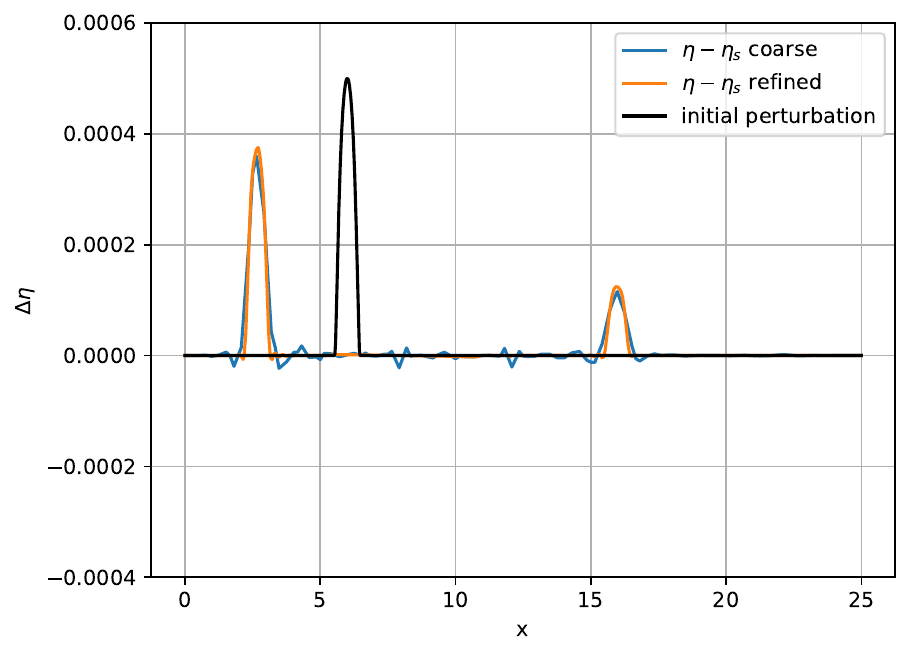}\caption{WB-$\GF$ with jg}
				\end{subfigure}
				\caption{Perturbation of non-smooth frictionless subcritical steady state: comparison between the different stabilizations. Results referred to PGL4 with $30$ elements for the coarse mesh and $128$ elements for the refined mesh}\label{fig:subwb}
			\end{figure}

			For the sake of compactness, we omit here other results but analogous considerations to the ones reported at the end of the tests concerning the perturbation of the supercritical case hold.\\
			
			\item[•] \textbf{Transcritical flow}\\
			The perturbation assumed in this context is identical to the one assumed in the subcritical case, i.e., \eqref{eq:perturbation_lake_at_rest} with $A:=5\cdot 10^{-4}$. We consider the same final time $T_f:=1.5$.
			The initial condition is displayed in Figure \ref{fig:transnonwb}.
			
			\begin{figure}
				\centering
				\includegraphics[width=0.4\textwidth]{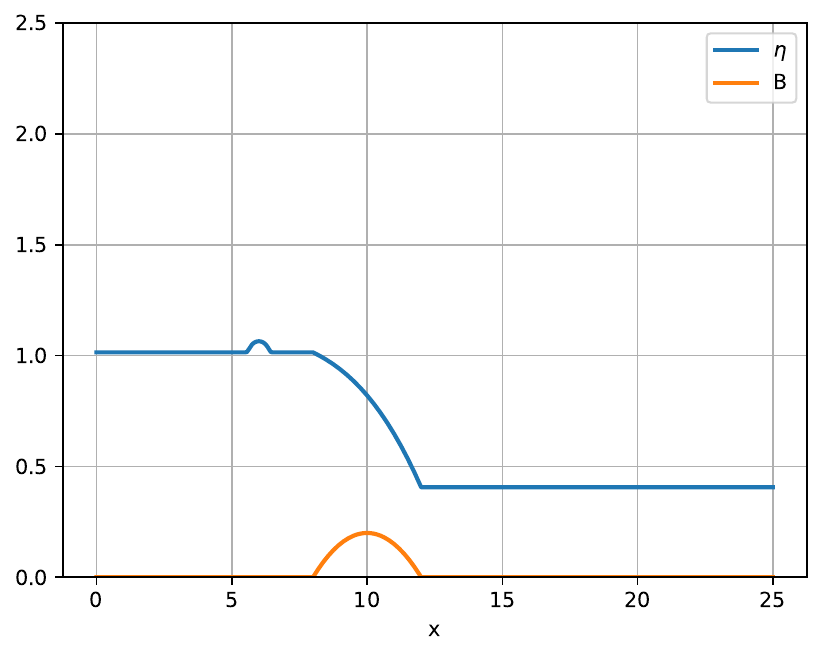}\caption{Perturbation of non-smooth frictionless transcritical steady state: initial total height and bathymetry. The perturbation is amplified by a factor $10^2$ in order to make it visible}
				\label{fig:transnonwb}
			\end{figure}
			
			\RIcolor{
				The results got for this steady state are absolutely analogous to the ones obtained in the previous cases and, therefore, for the sake of compactness, we directly focus on the best performing settings. 
				Figure~\ref{fig:transwb} shows the good results obtained for WB-HS-jr and WB-GF-jg, along with the oscillatory results obtained coupling the two space discretizations with jt in order to have a comparison with a stabilization not designed for the preservation of generic steady states.
				The results obtained with the other stabilizations, jc and je, are similar to the ones obtained with jt and, hence, they have been omitted.
			}%
			%
			The advantages of basing the stabilization on $\frac{\partial}{\partial x}\uvec{F}-\uvec{S}$ are pretty evident. The spurious oscillations obtained with jr and jg are much smaller and controlled in terms of number and magnitude. The little ones still present in such cases, due to a lack of limiting, fade away in the mesh refinement.

			\begin{figure}
				\centering
				\begin{subfigure}{0.31\textwidth}
					\includegraphics[width=\textwidth]{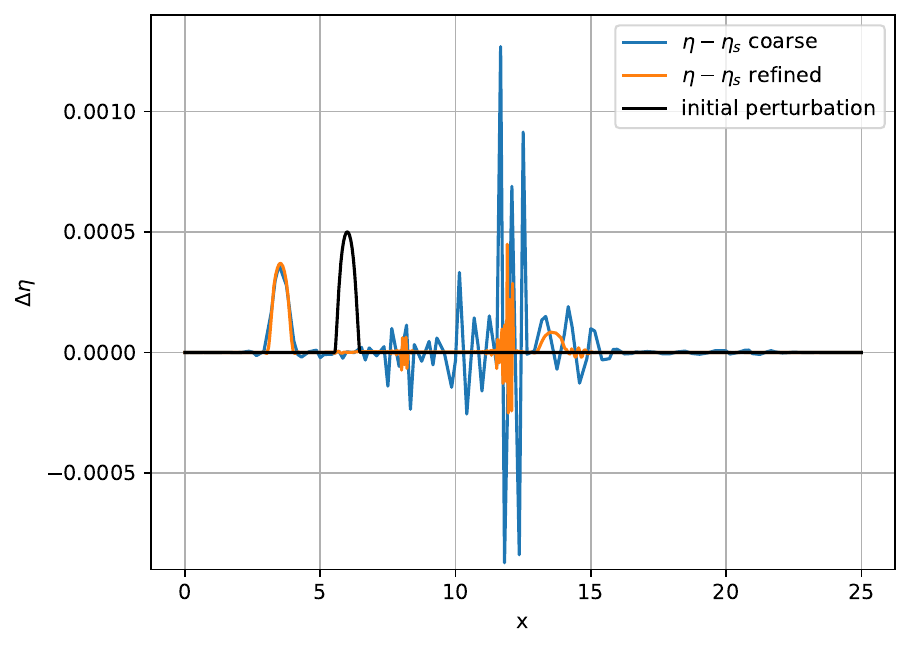}\caption{WB-$\HS$ with jt}
				\end{subfigure}
				\begin{subfigure}{0.31\textwidth}
					\includegraphics[width=\textwidth]{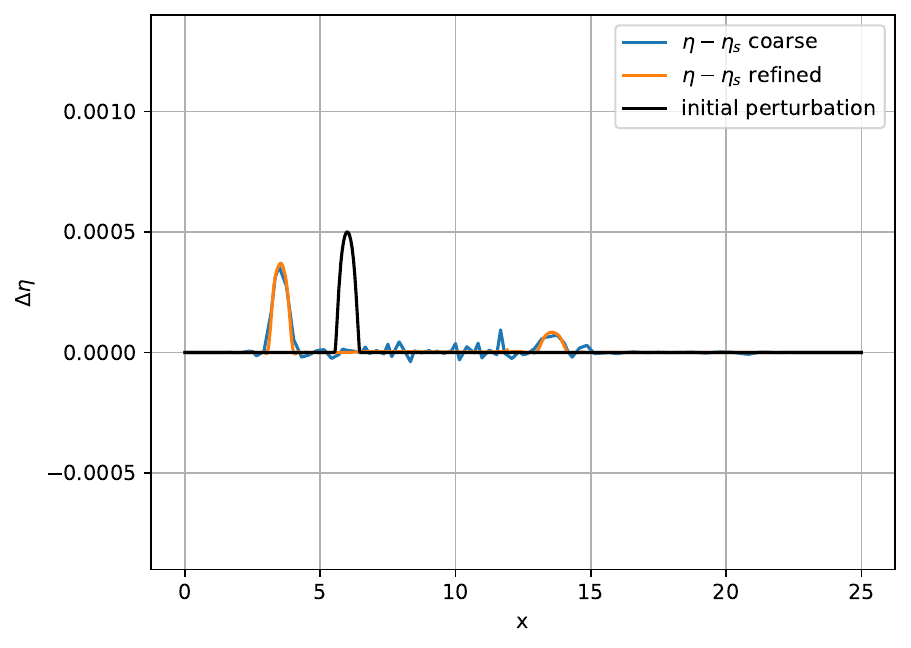}\caption{WB-$\HS$ with jr}
				\end{subfigure}\\
				\begin{subfigure}{0.31\textwidth}
					\includegraphics[width=\textwidth]{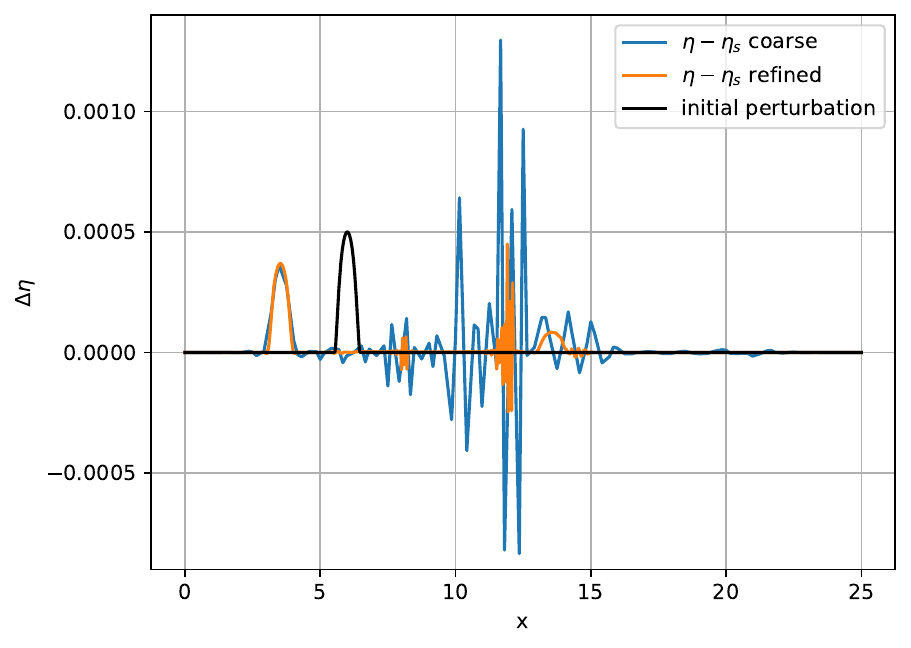}\caption{WB-$\GF$ with jt}
				\end{subfigure}
				\begin{subfigure}{0.31\textwidth}
					\includegraphics[width=\textwidth]{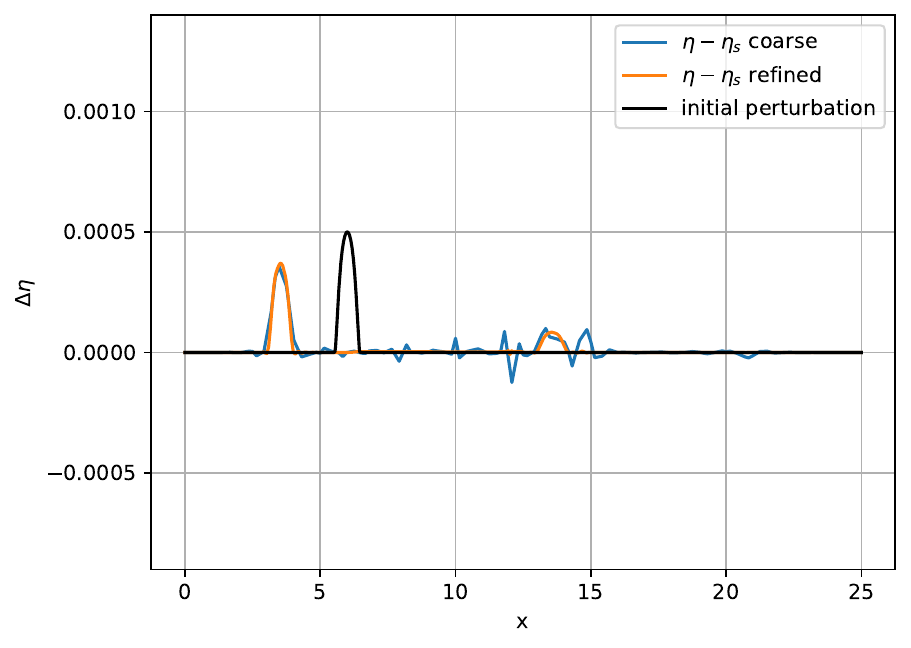}\caption{WB-$\GF$ with jg}
				\end{subfigure}
				\caption{Perturbation of non-smooth frictionless transcritical steady state: comparison between different stabilizations. Results referred to PGL4 with $30$ elements for the coarse mesh and $128$ elements for the refined mesh}\label{fig:transwb}
			\end{figure}
			
			For the sake of compactness, we omit other results but we remark that there are no significative differences with respect to the other non-smooth frictionless steady states.

		\end{itemize}

		\subsubsection{Tests with friction}
		In this section, we will only focus on the supercritical and on the subcritical flows respectively characterized by the boundary conditions \eqref{eq:superBC} and \eqref{eq:subBC}. In particular, we assume the usual $C^0$ bathymetry \eqref{eq:c0_bathymetry} and $n_M:=0.03$. 
		
		Just like in the frictionless tests of the previous section, the steady states are not available in closed-form. Moreover, in this context, \eqref{eq:steady_no_friction} does not hold and there is no way to exactly compute the water height. Therefore, the steady states have been obtained by running simulations with very refined meshes, with $2048$ elements and P1 basis functions, for time long enough and, finally, transferred to the meshes used for the tests through interpolation of $\eta$ and $q$. In this context, as initial conditions, we adopted the frictionless steady states of the previous section. Further, for coherence, for each simulation involving the evolution of the perturbation of a steady state with a specific setting, the steady state obtained through the same setting has been adopted as reference.
		
		The perturbations and the final times assumed here are the same as the ones assumed in the frictionless case in the analogous tests of the previous section.
		
		\begin{itemize}
			\item[•] \textbf{Supercritical flow}\\
			We start by showing, in Figure \ref{fig:sup_fric_steady}, the numerical steady states obtained with different settings. We can see how all the results provided are consistent: the friction causes a speed decrease in the direction of the flow, which, due to the constant momentum, is responsible for the general increase of the total height, from left to right, not present in the frictionless case.
			Concerning $\eta$, we cannot appreciate any macroscopical difference between the approximations provided by the different schemes. For what concerns $q$, instead, the reader is invited to notice the different scales used for the different settings: jr and jg are the only stabilizations able to capture the constant momentum up to machine precision. In all the other cases, the oscillations in correspondence of the discontinuities of the first derivative of the bathymetry are of the order of $10^{-3}\sim 10^{-4}.$

			\begin{figure}
				\centering
				\begin{subfigure}{0.31\textwidth}
					\includegraphics[width=\textwidth]{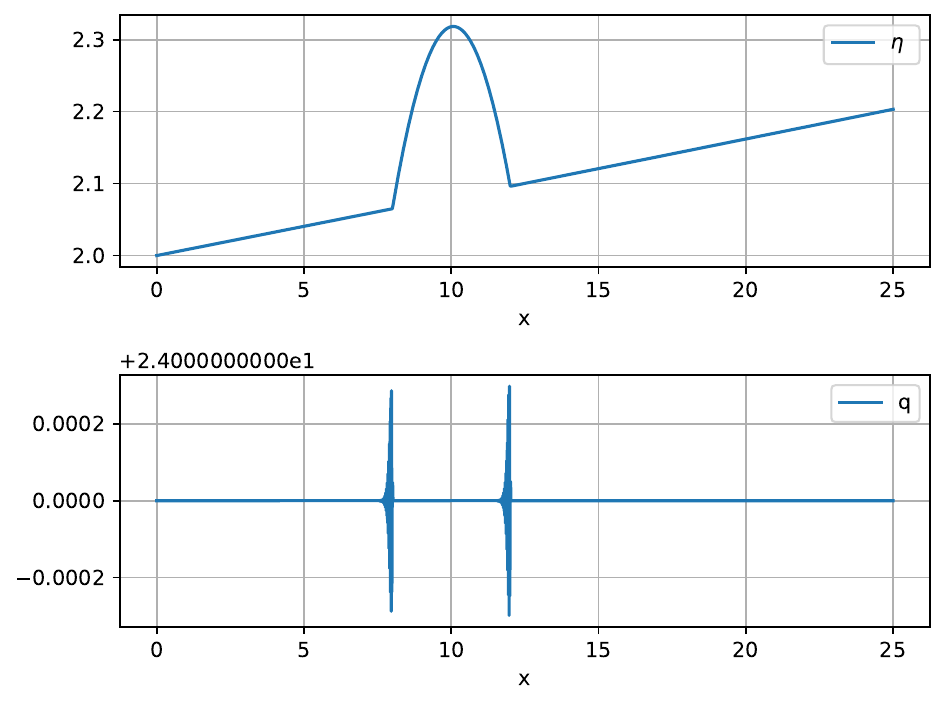}\caption{Reference non-WB}
				\end{subfigure}
				\begin{subfigure}{0.31\textwidth}
					\includegraphics[width=\textwidth]{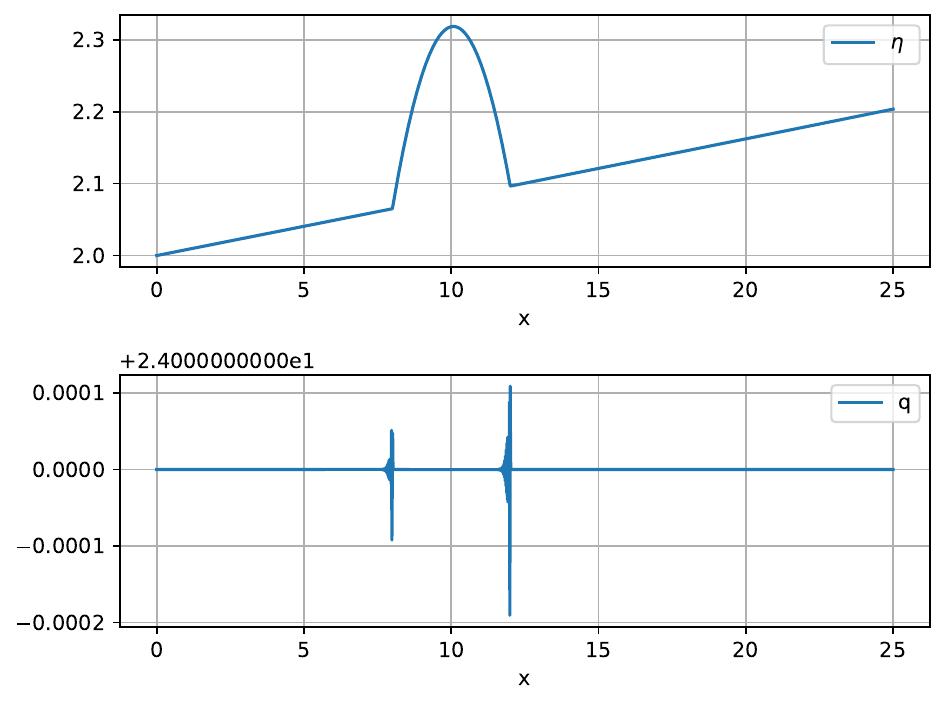}\caption{WB-$\HS$ with jc}
				\end{subfigure}
				\begin{subfigure}{0.31\textwidth}
					\includegraphics[width=\textwidth]{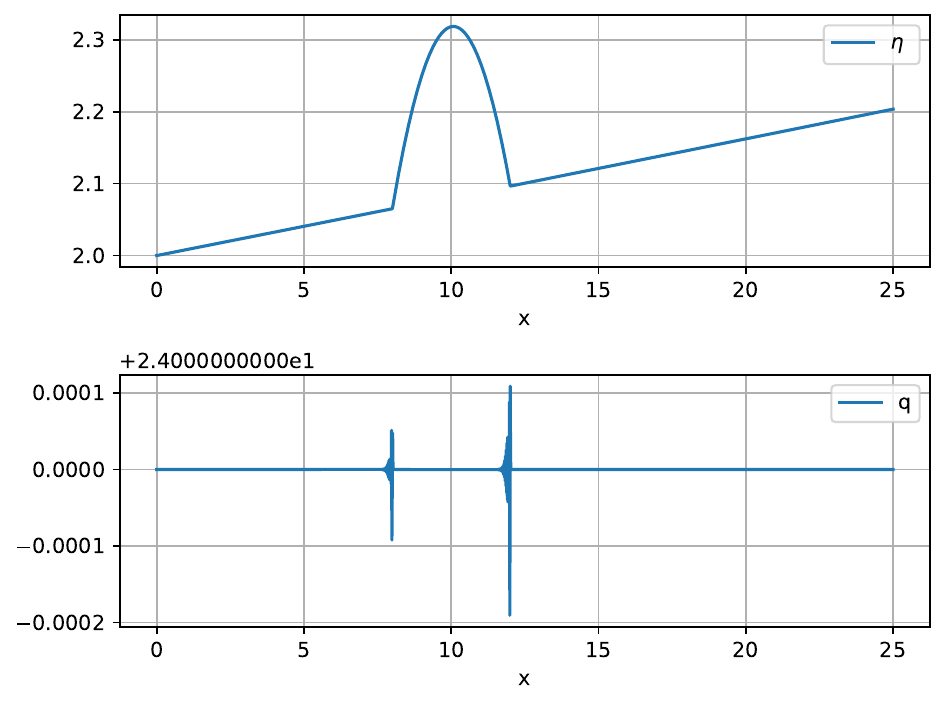}\caption{WB-$\GF$ with jc}
				\end{subfigure}\\
				\begin{subfigure}{0.31\textwidth}
					\includegraphics[width=\textwidth]{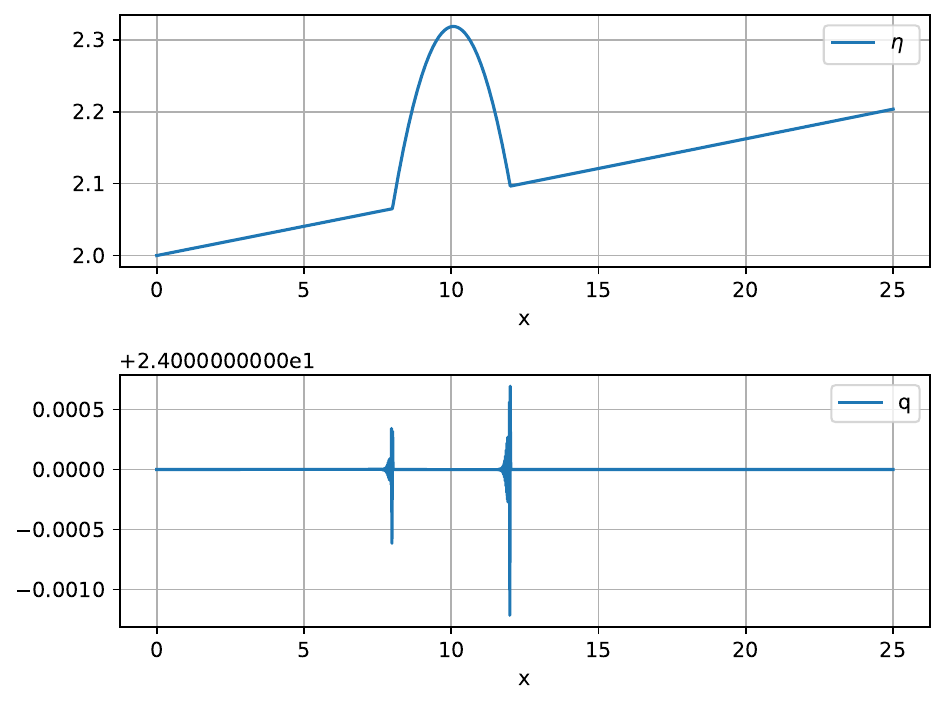}\caption{WB-$\HS$ with jt}
				\end{subfigure}
				\begin{subfigure}{0.31\textwidth}
					\includegraphics[width=\textwidth]{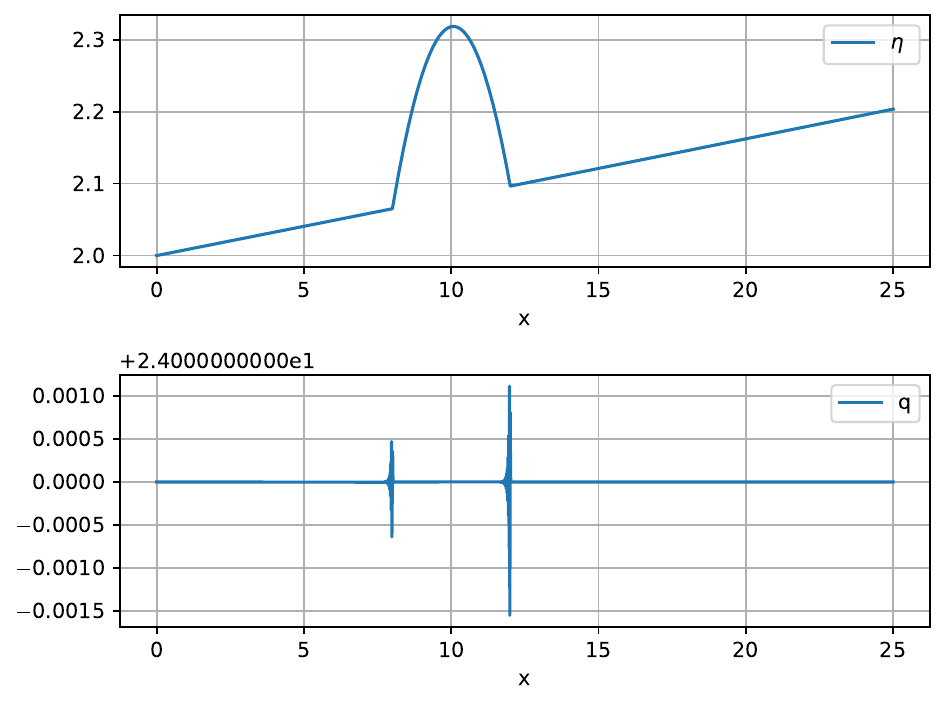}\caption{WB-$\HS$ with je}
				\end{subfigure}
				\begin{subfigure}{0.31\textwidth}
					\includegraphics[width=\textwidth]{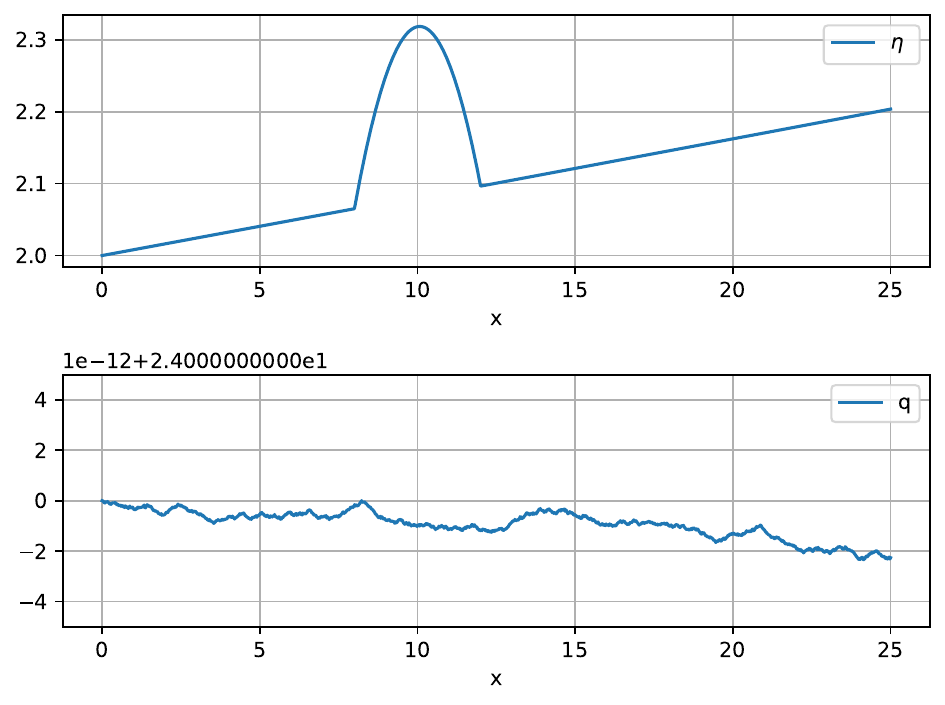}\caption{WB-$\HS$ with jr}
				\end{subfigure}\\
				\begin{subfigure}{0.31\textwidth}
					\includegraphics[width=\textwidth]{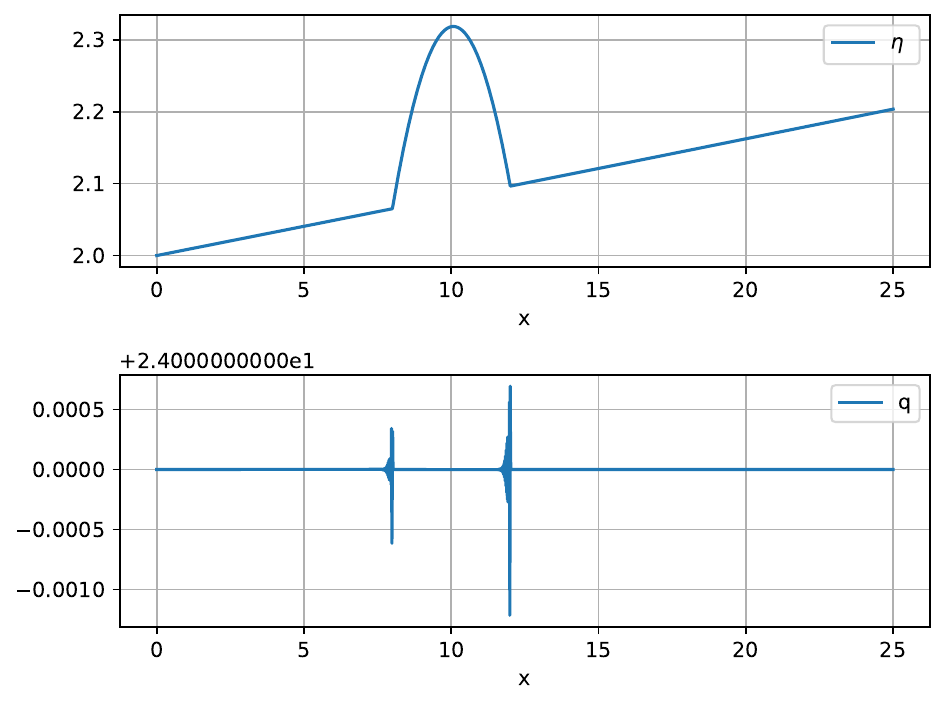}\caption{WB-$\GF$ with jt}
				\end{subfigure}
				\begin{subfigure}{0.31\textwidth}
					\includegraphics[width=\textwidth]{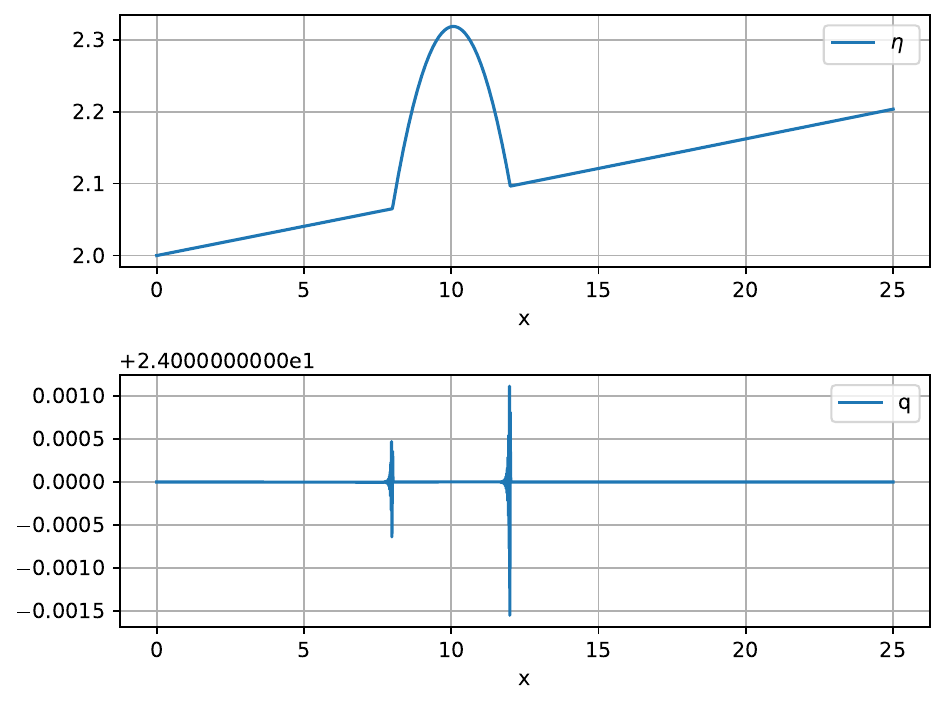}\caption{WB-$\GF$ with je}
				\end{subfigure}
				\begin{subfigure}{0.31\textwidth}
					\includegraphics[width=\textwidth]{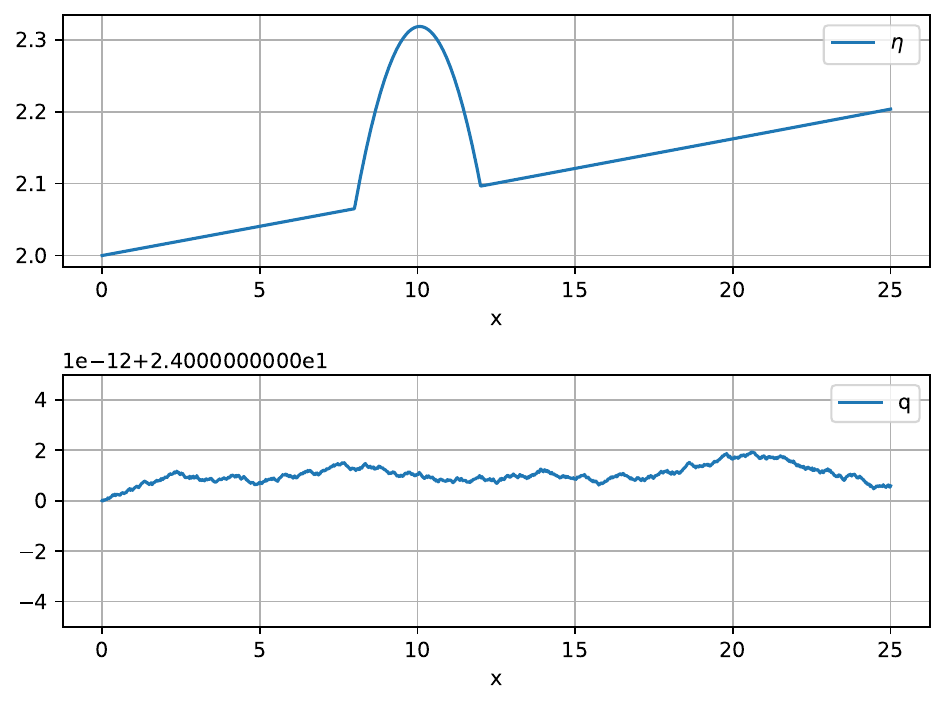}\caption{WB-$\GF$ with jg}
				\end{subfigure}
				\caption{Non-smooth supercritical steady state with friction: numerical steady state obtained with different settings. Results referred to P1 with $2048$ elements. Different scales have been used for $q$}\label{fig:sup_fric_steady}
			\end{figure}
			
			We continue now with the perturbation analysis.
			\RIcolor{%
				The results are similar to the ones retrieved in the analogous frictionless test.
				The evolution of the perturbation obtained with different settings is reported in Figure~\ref{fig:supwb_fric}.
				As usual, the results obtained for jr and jg are much better than the ones obtained with the other settings and analogous considerations hold with respect to the frictionless case. 
				We remark that the (omitted) results got with jc and je coupled with WB-HS and WB-GF were analogous to the ones got with jt.\\
			}
			%

			
			\begin{figure}
				\centering
				\begin{subfigure}{0.31\textwidth}
					\includegraphics[width=\textwidth]{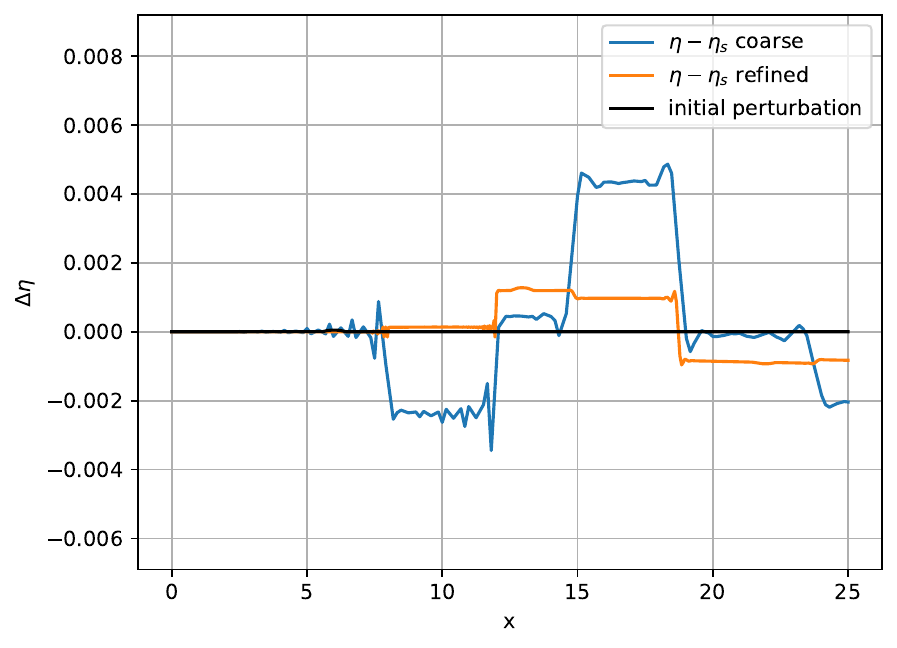}\caption{Reference non-WB setting}
				\end{subfigure}
				\begin{subfigure}{0.31\textwidth}
					\includegraphics[width=\textwidth]{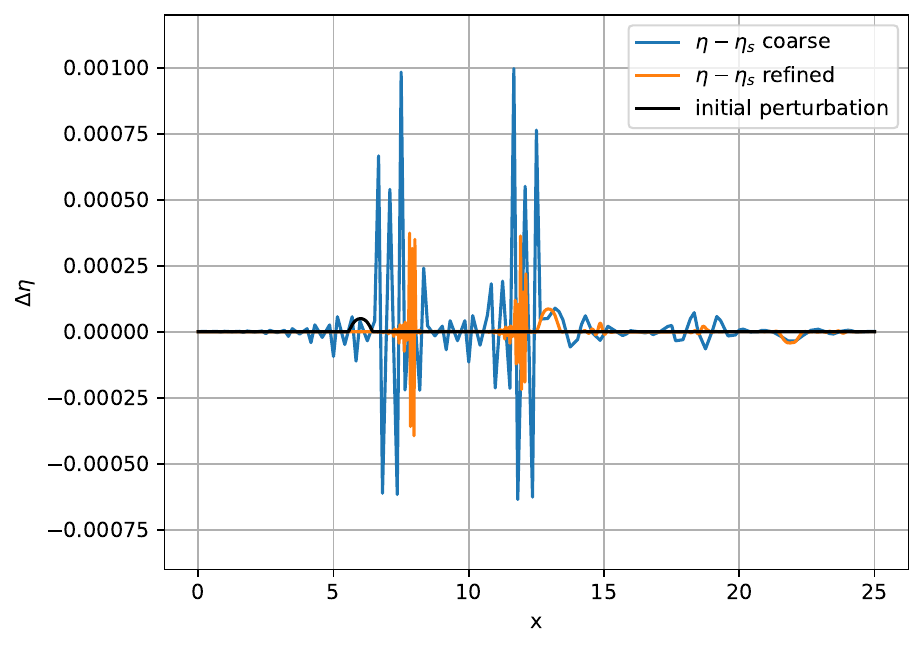}\caption{WB-$\HS$ with jt}
				\end{subfigure}
				\begin{subfigure}{0.31\textwidth}
					\includegraphics[width=\textwidth]{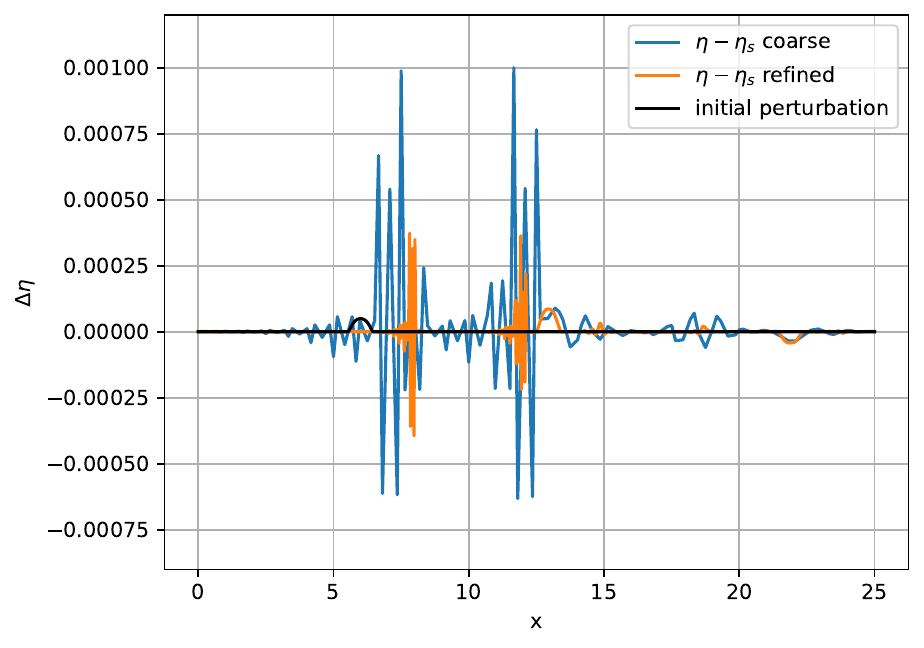}\caption{WB-$\GF$ with jt}
				\end{subfigure}\\
				\begin{subfigure}{0.31\textwidth}
					\includegraphics[width=\textwidth]{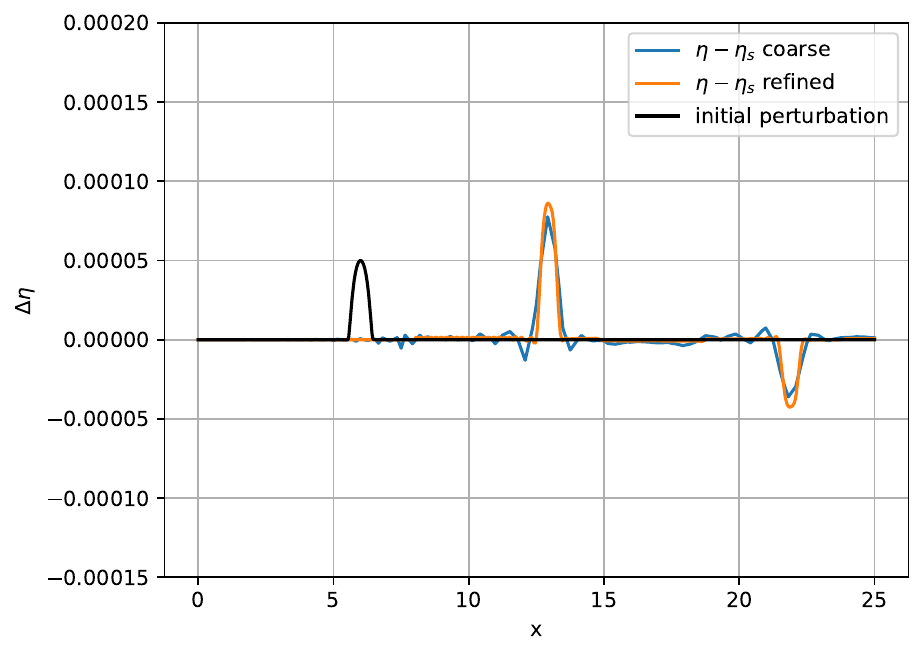}\caption{WB-$\HS$ with jr}
				\end{subfigure}
				\begin{subfigure}{0.31\textwidth}
					\includegraphics[width=\textwidth]{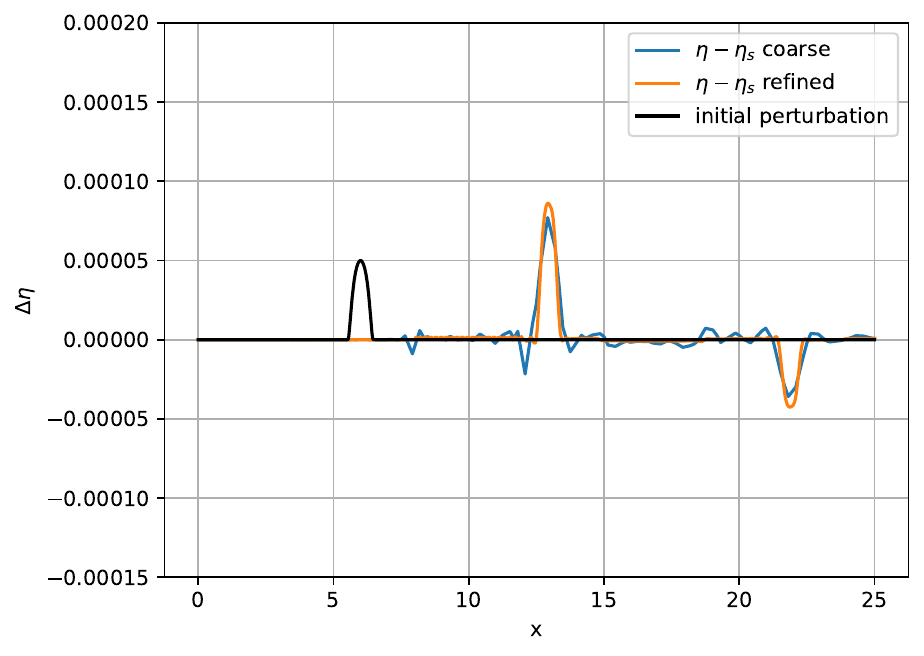}\caption{WB-$\GF$ with jg}
				\end{subfigure}
				\caption{Perturbation of non-smooth supercritical steady state with friction: comparison between different settings. Results referred to PGL4 with $30$ elements for the coarse mesh and $128$ elements for the refined mesh. A different scale has been used for the reference non-WB setting}\label{fig:supwb_fric}
			\end{figure}

			\item[•] \textbf{Subcritical flow}\\
			Also in this case, we focus first on the steady state obtained with different settings, reported in Figure \ref{fig:sub_fric_steady}. As confirmed by the numerical results, in this context we have a general increase in the velocity of the flow from left to right with consequent decrease of the water height.
			The numerical results confirm the consistency of all the settings but, again, there is remarkable difference between jr and jg and the other stabilizations in capturing the constant momentum. The amplitude of the spurious oscillations due to the non-smooth bathymetry, around $10^{-3}\sim 10^{-4}$ for all the other settings, jump down to $10^{-7}$ with jr and to machine precision with jg.

			\begin{figure}
				\centering
				\begin{subfigure}{0.31\textwidth}
					\includegraphics[width=\textwidth]{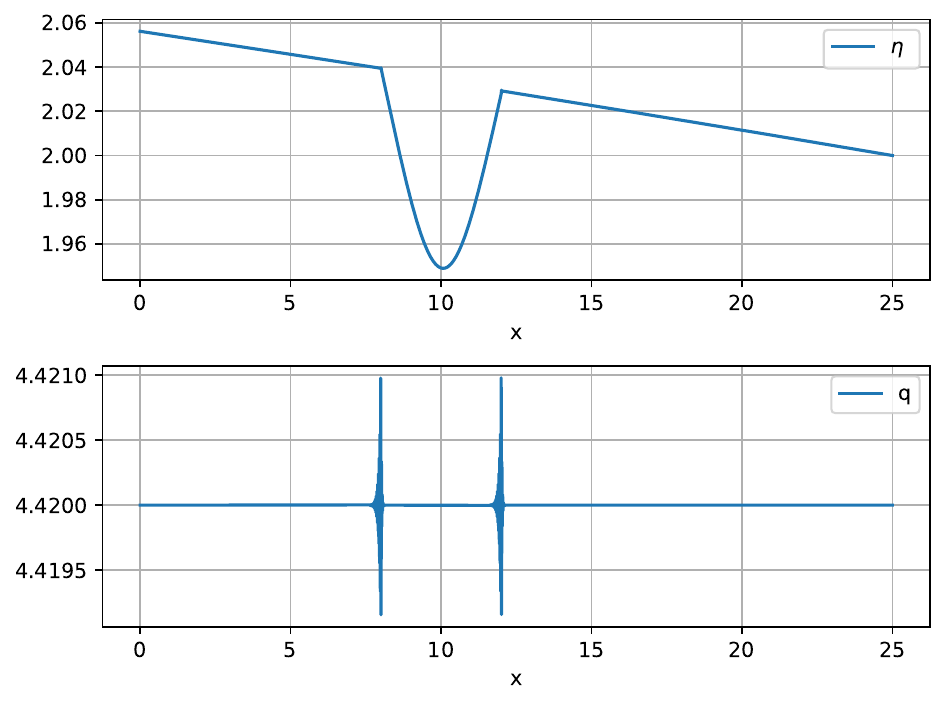}\caption{Reference non-WB}
				\end{subfigure}
				\begin{subfigure}{0.31\textwidth}
					\includegraphics[width=\textwidth]{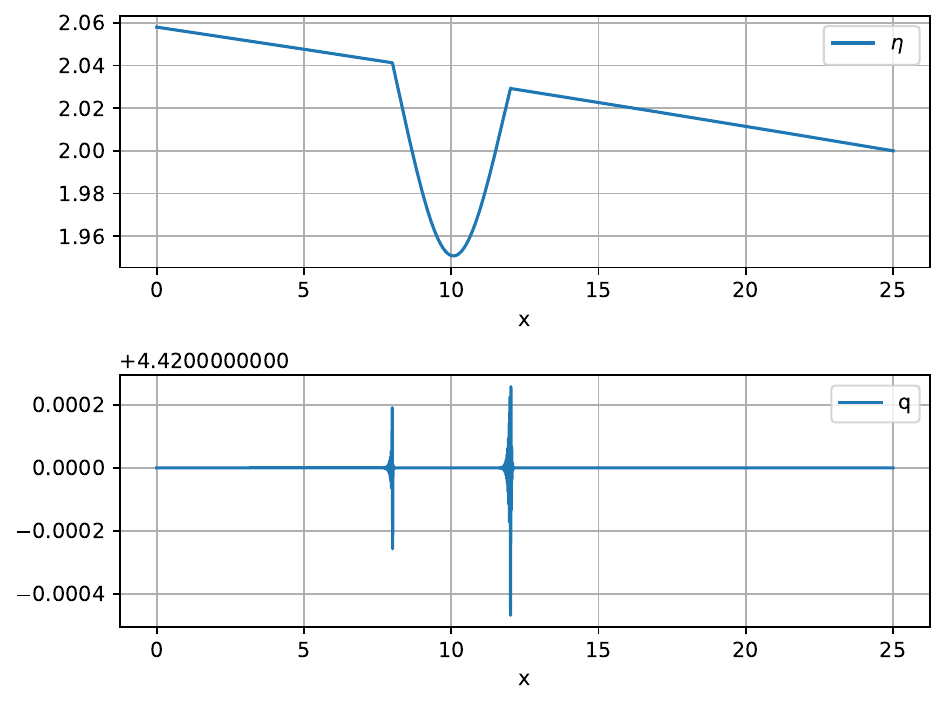}\caption{WB-$\HS$ with jc}
				\end{subfigure}
				\begin{subfigure}{0.31\textwidth}
					\includegraphics[width=\textwidth]{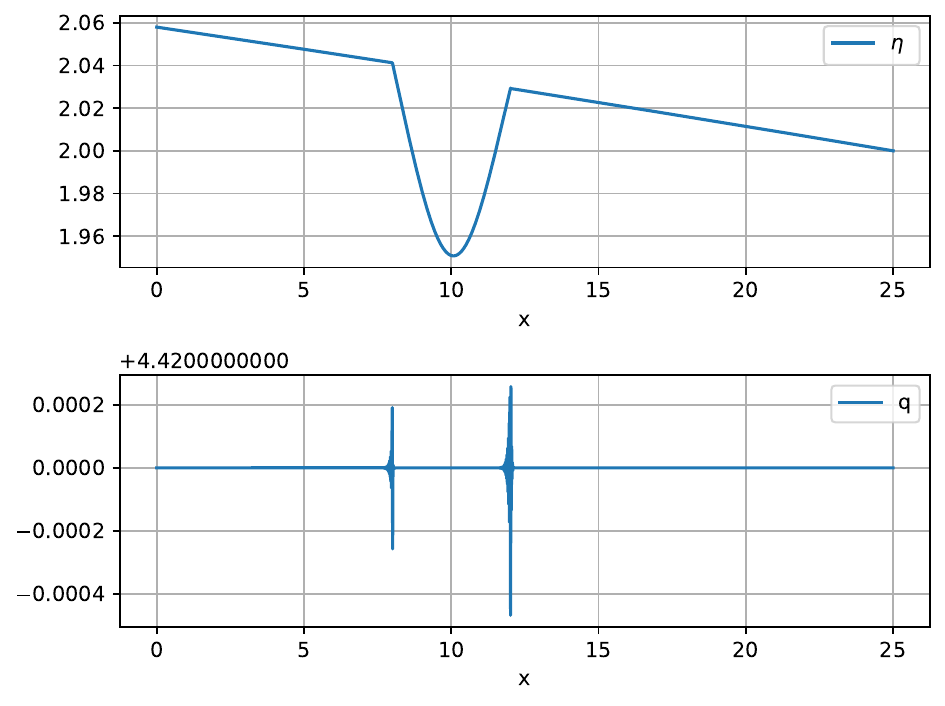}\caption{WB-$\GF$ with jc}
				\end{subfigure}\\
				\begin{subfigure}{0.31\textwidth}
					\includegraphics[width=\textwidth]{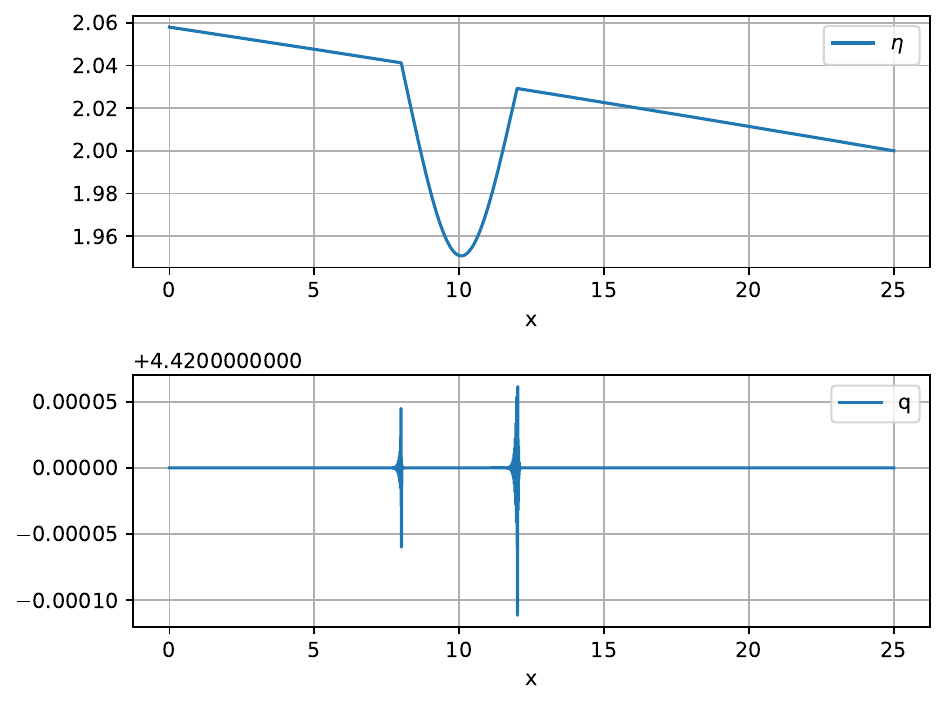}\caption{WB-$\HS$ with jt}
				\end{subfigure}
				\begin{subfigure}{0.31\textwidth}
					\includegraphics[width=\textwidth]{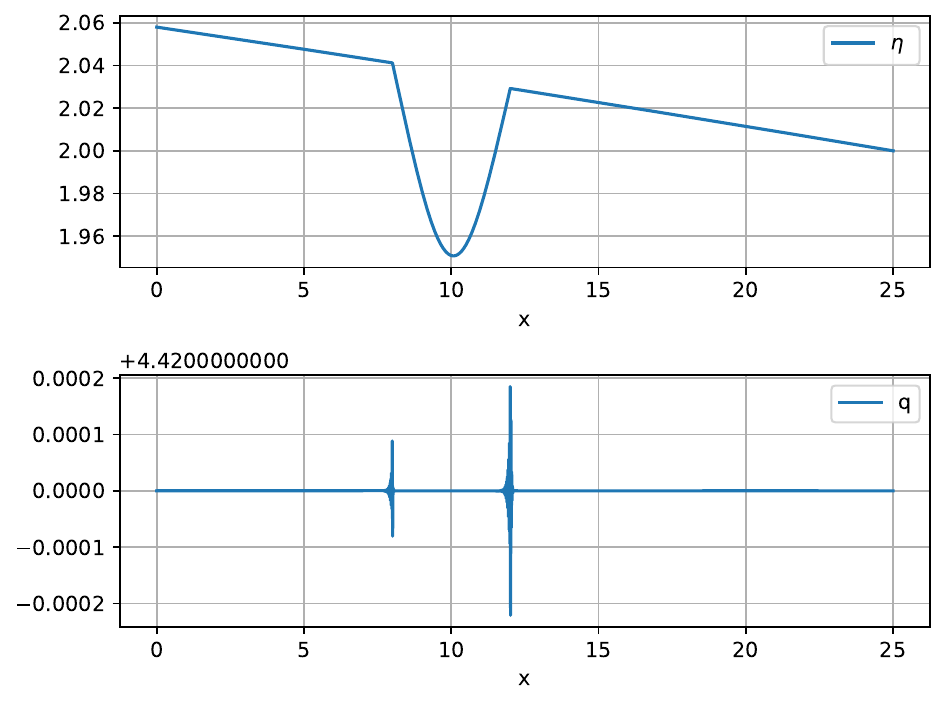}\caption{WB-$\HS$ with je}
				\end{subfigure}
				\begin{subfigure}{0.31\textwidth}
					\includegraphics[width=\textwidth]{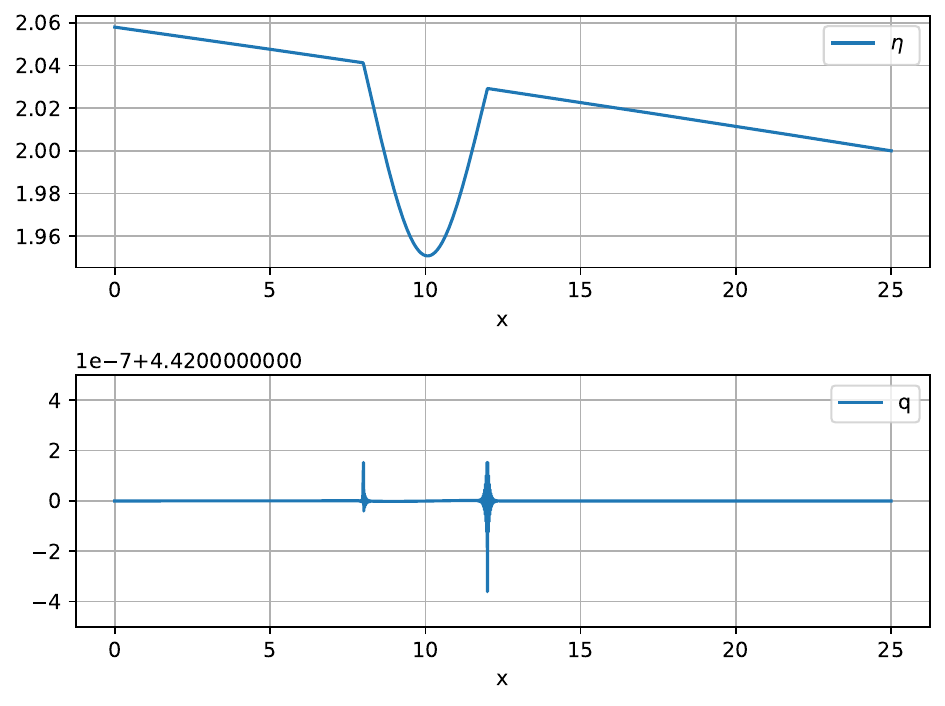}\caption{WB-$\HS$ with jr}
				\end{subfigure}\\
				\begin{subfigure}{0.31\textwidth}
					\includegraphics[width=\textwidth]{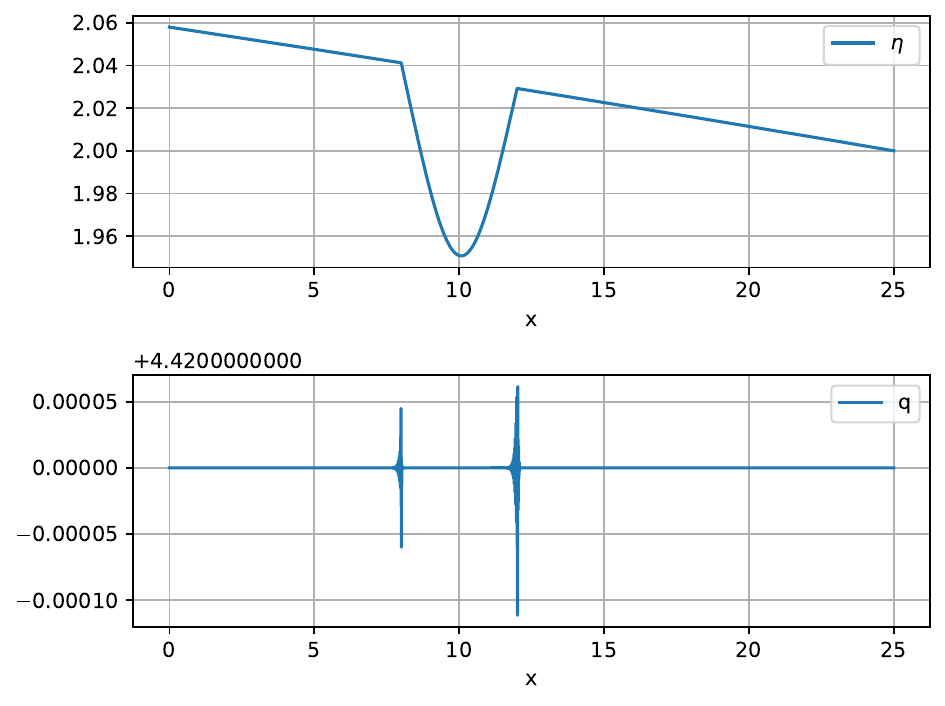}\caption{WB-$\GF$ with jt}
				\end{subfigure}
				\begin{subfigure}{0.31\textwidth}
					\includegraphics[width=\textwidth]{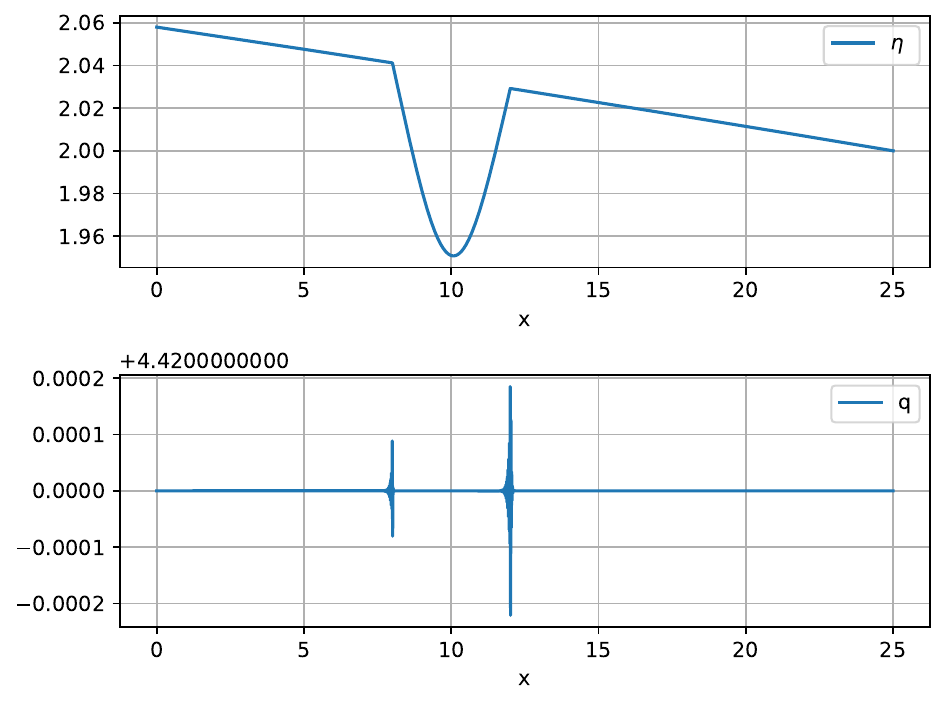}\caption{WB-$\GF$ with je}
				\end{subfigure}
				\begin{subfigure}{0.31\textwidth}
					\includegraphics[width=\textwidth]{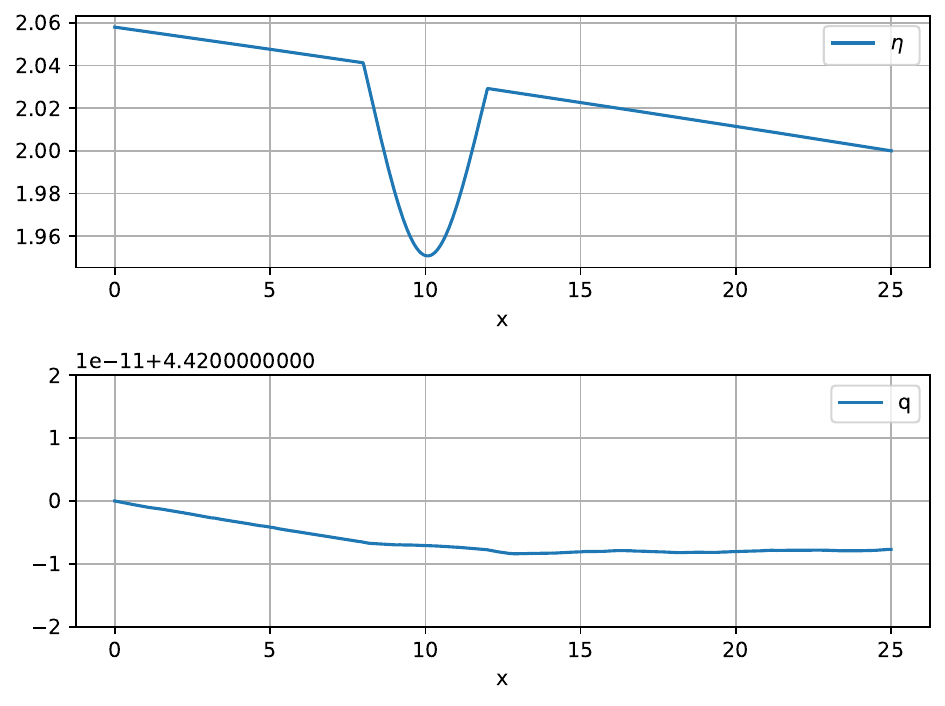}\caption{WB-$\GF$ with jg}
				\end{subfigure}
				\caption{Non-smooth subcritical steady state with friction: numerical steady state obtained with different settings. Results referred to P1 with $2048$ elements. Different scales have been used for $q$}\label{fig:sub_fric_steady}
			\end{figure}

			No fundamental differences concerning the perturbation analysis have been registered with respect to the subcritical test without friction.
			The related numerical results are displayed in Figure~\ref{fig:subwb_fric} and they further confirm the advantages in the adoption of jr and jg.
			
			
			\begin{figure}
				\centering
				\begin{subfigure}{0.31\textwidth}
					\includegraphics[width=\textwidth]{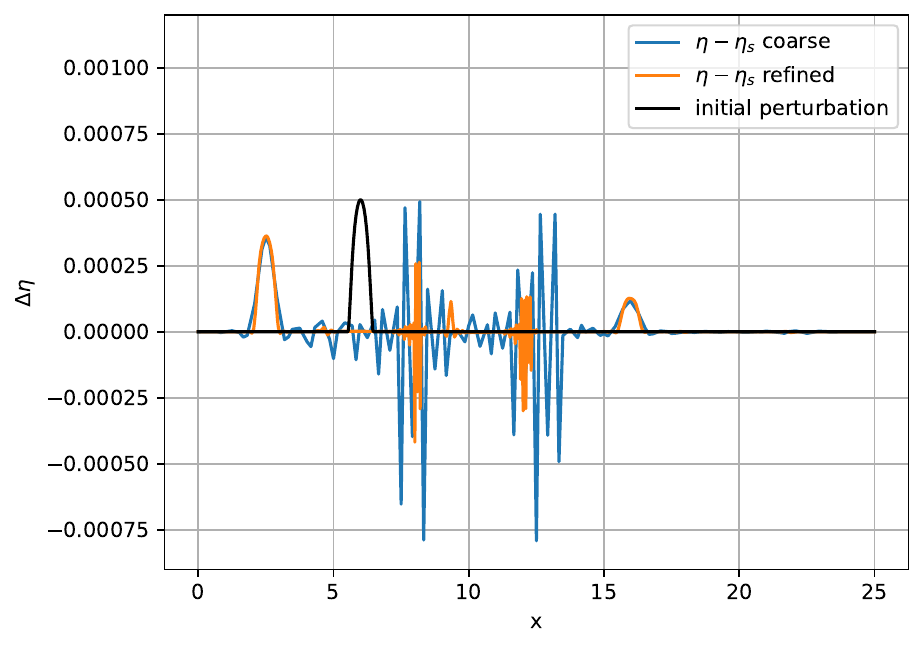}\caption{WB-$\HS$ with jc}
				\end{subfigure}
				\begin{subfigure}{0.31\textwidth}
					\includegraphics[width=\textwidth]{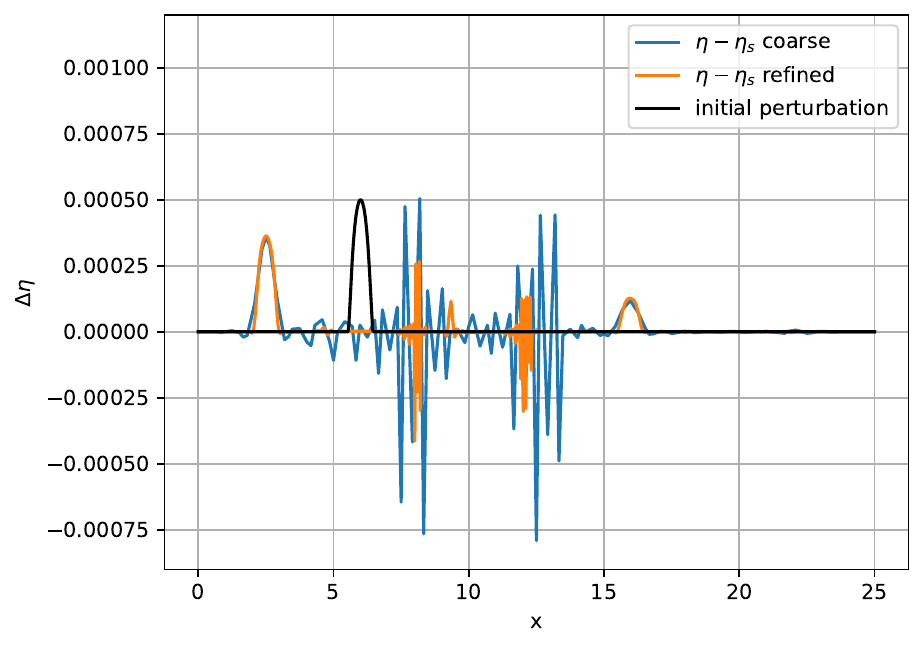}\caption{WB-$\GF$ with jc}
				\end{subfigure}\\
				\begin{subfigure}{0.31\textwidth}
					\includegraphics[width=\textwidth]{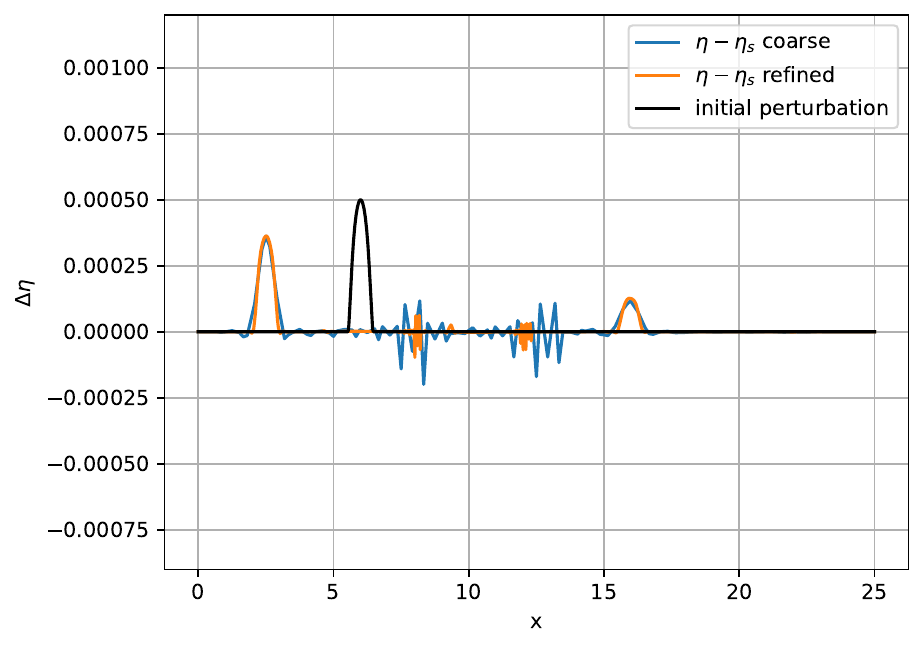}\caption{WB-$\HS$ with jt}
				\end{subfigure}
				\begin{subfigure}{0.31\textwidth}
					\includegraphics[width=\textwidth]{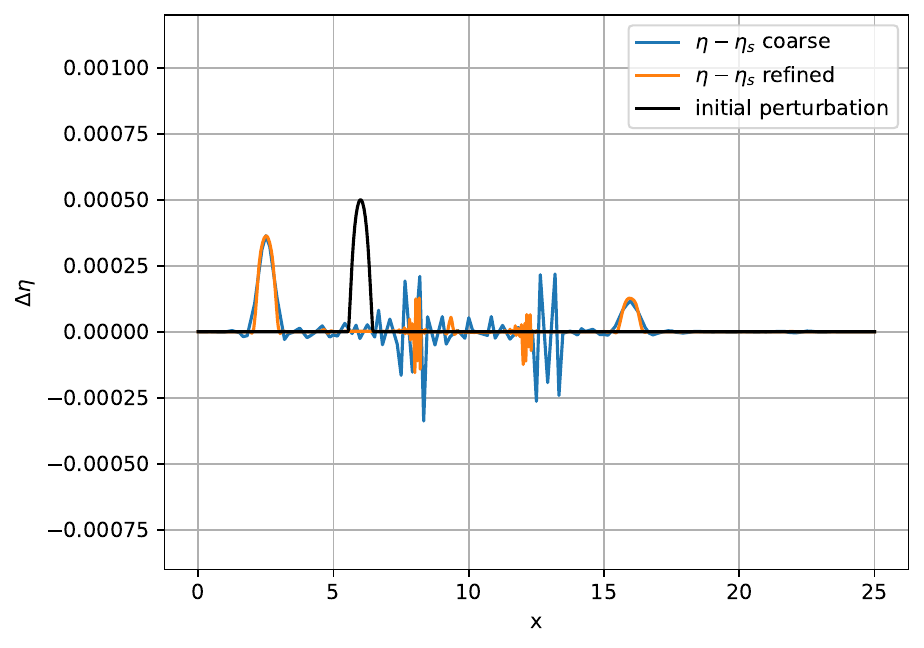}\caption{WB-$\HS$ with je}
				\end{subfigure}
				\begin{subfigure}{0.31\textwidth}
					\includegraphics[width=\textwidth]{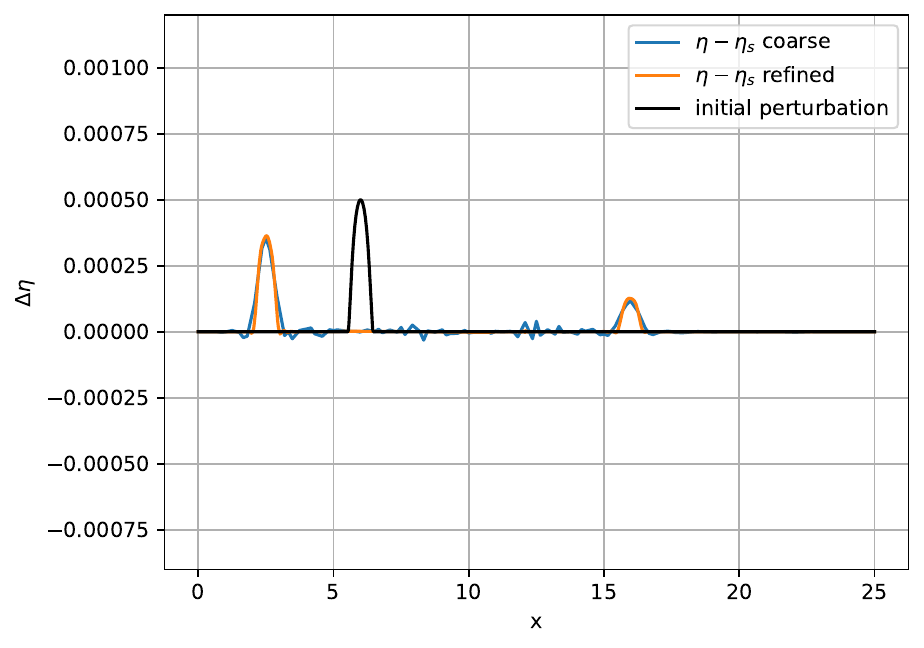}\caption{WB-$\HS$ with jr}
				\end{subfigure}\\
				\begin{subfigure}{0.31\textwidth}
					\includegraphics[width=\textwidth]{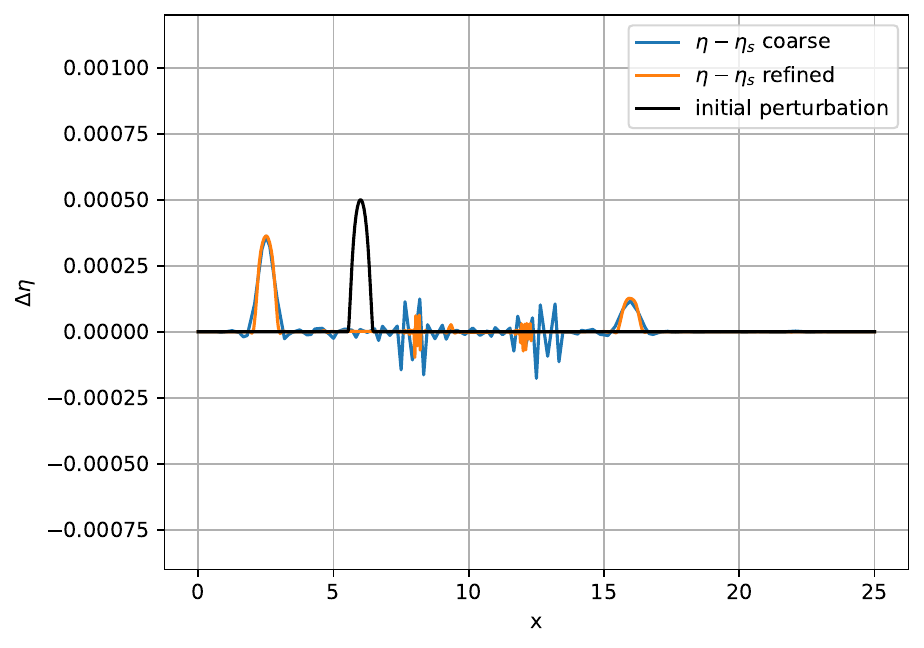}\caption{WB-$\GF$ with jt}
				\end{subfigure}
				\begin{subfigure}{0.31\textwidth}
					\includegraphics[width=\textwidth]{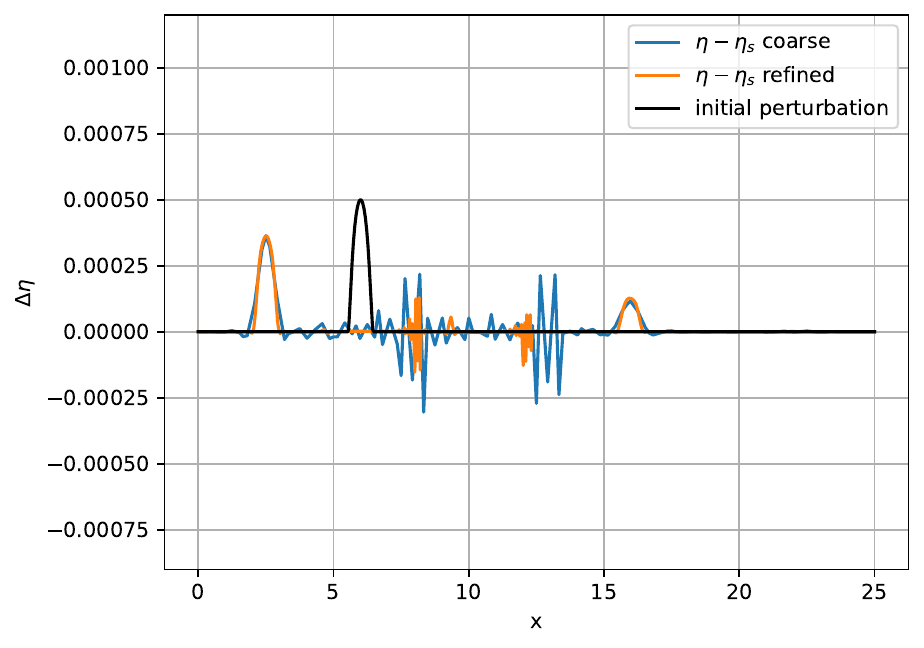}\caption{WB-$\GF$ with je}
				\end{subfigure}
				\begin{subfigure}{0.31\textwidth}
					\includegraphics[width=\textwidth]{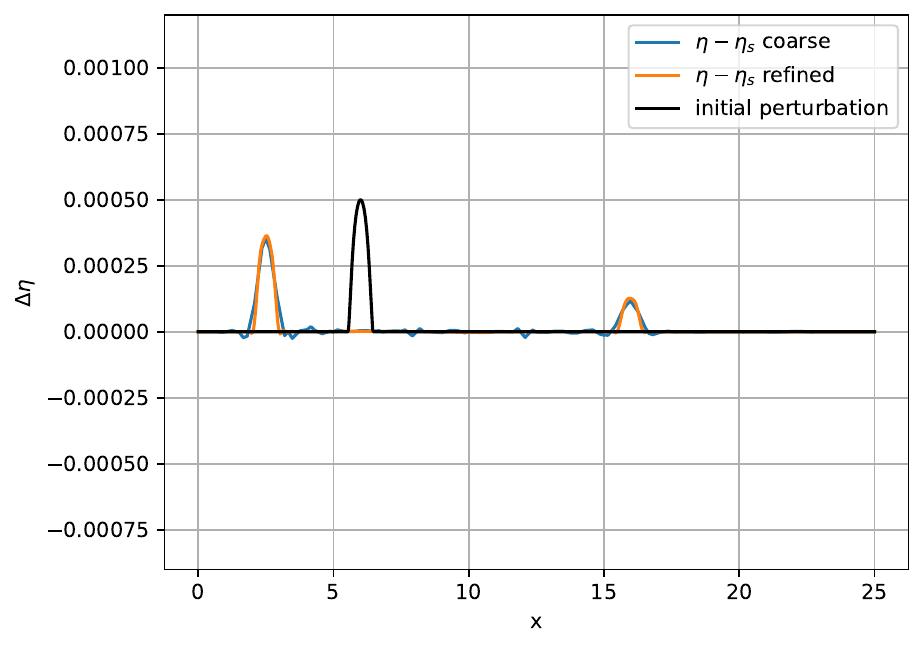}\caption{WB-$\GF$ with jg}
				\end{subfigure}
				\caption{Perturbation of non-smooth subcritical steady state with friction: comparison between the different stabilizations. Results referred to PGL4 with $30$ elements for the coarse mesh and $128$ elements for the refined mesh}\label{fig:subwb_fric}
			\end{figure}

			We close this section with a comparison, in Figure \ref{fig:subwb_compare_PGLn}, between basis functions of different degrees for the best performing settings.
			Like in the context of the frictionless supercritical case, the number of the elements in the different coarse meshes is selected in such a way to have a constant number of DoFs. The results are analogous: the quality of the results improves, as the degree increases, in terms of ability to capture the peaks. Further, amplitude and number of the spurious oscillations decrease. Under this point of view, we remark that the remaining spurious oscillations, whose amplitude is however very small, are due to the fact that here we do not adopt any limiting technique and, moreover, they disappear in the mesh refinement. 
			Indeed, an ``unfair'' comparison, with a constant number of elements, would give even better results.
			
			\begin{figure}
				\centering
				\begin{subfigure}{0.31\textwidth}
					\includegraphics[width=\textwidth]{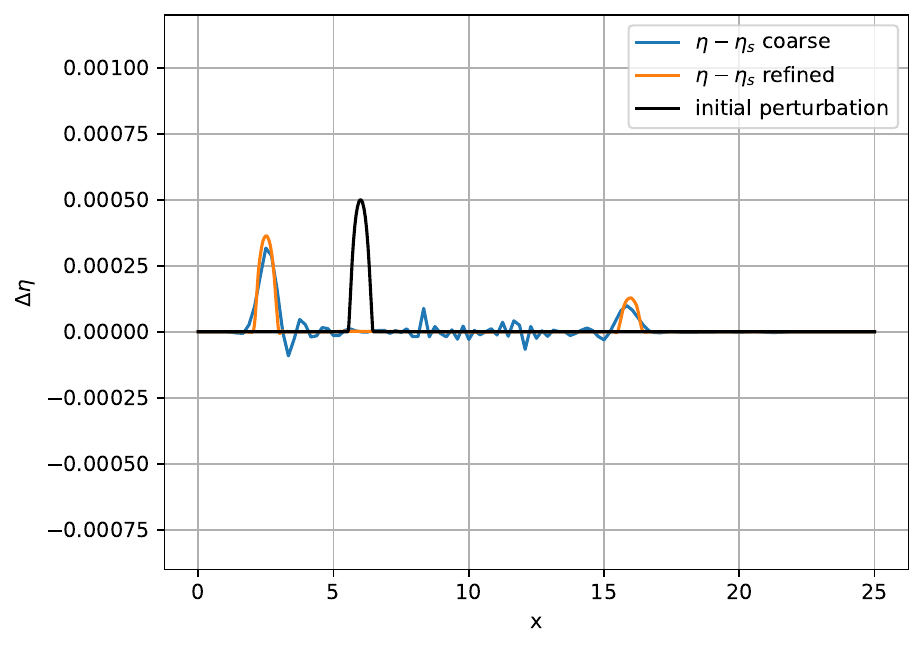}\caption{WB-$\HS$ with jr, PGL2}
				\end{subfigure}
				\begin{subfigure}{0.31\textwidth}
					\includegraphics[width=\textwidth]{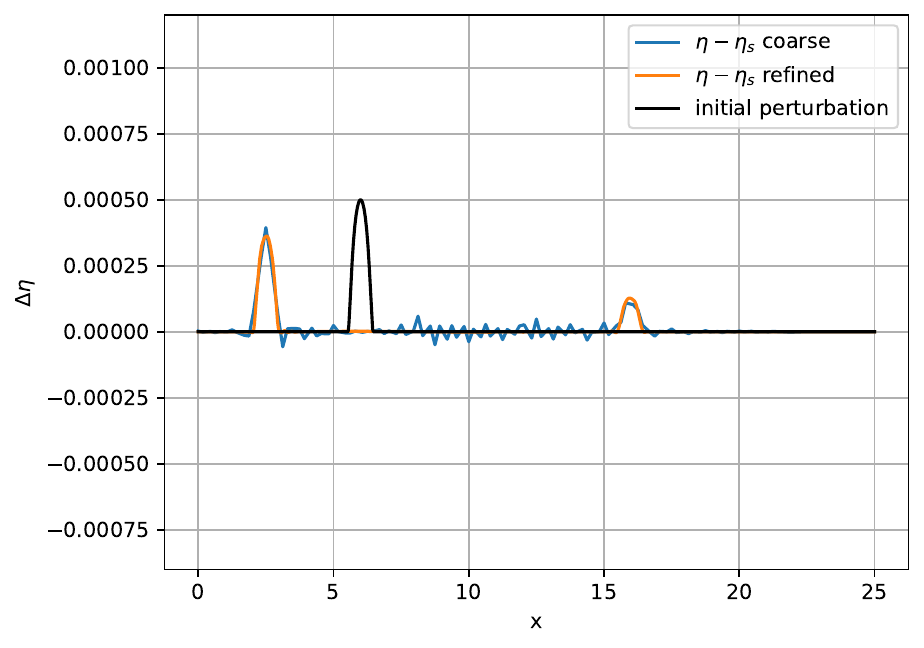}\caption{WB-$\HS$ with jr, PGL3}
				\end{subfigure}
				\begin{subfigure}{0.31\textwidth}
					\includegraphics[width=\textwidth]{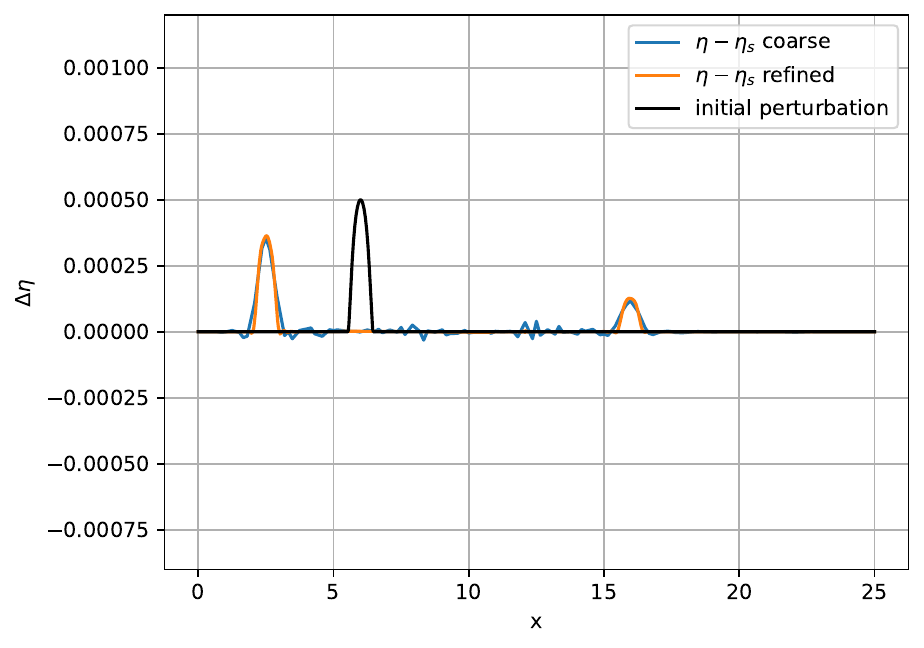}\caption{WB-$\HS$ with jr, PGL4}
				\end{subfigure}\\
				\begin{subfigure}{0.31\textwidth}
					\includegraphics[width=\textwidth]{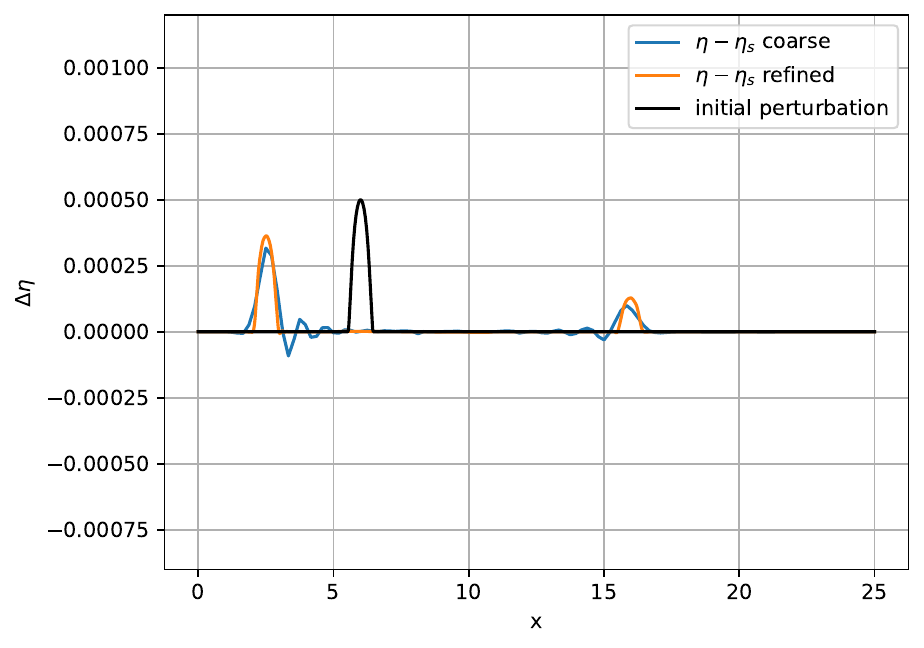}\caption{WB-$\GF$ with jg, PGL2}
				\end{subfigure}
				\begin{subfigure}{0.31\textwidth}
					\includegraphics[width=\textwidth]{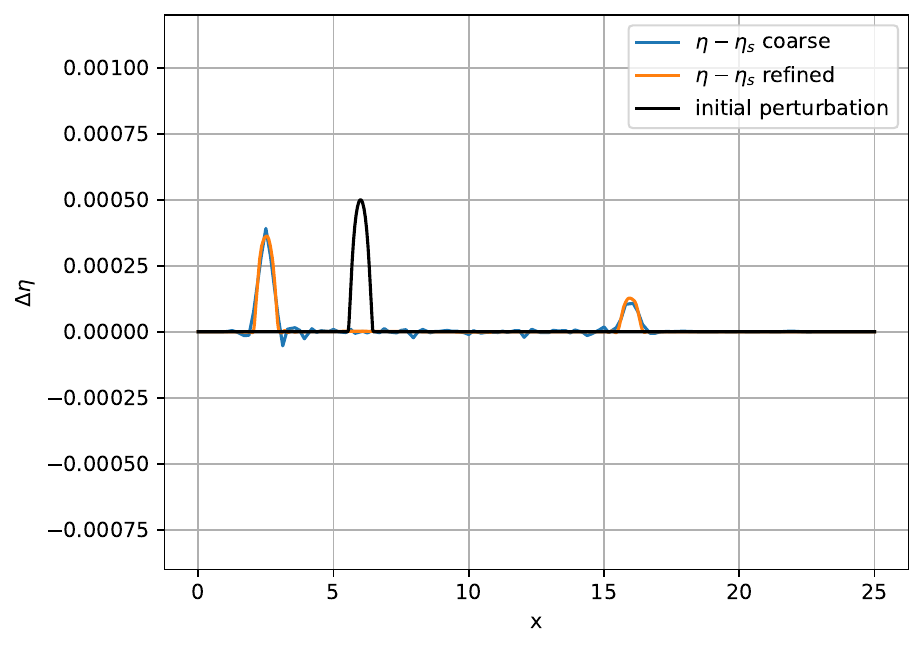}\caption{WB-$\GF$ with jg, PGL3}
				\end{subfigure}
				\begin{subfigure}{0.31\textwidth}
					\includegraphics[width=\textwidth]{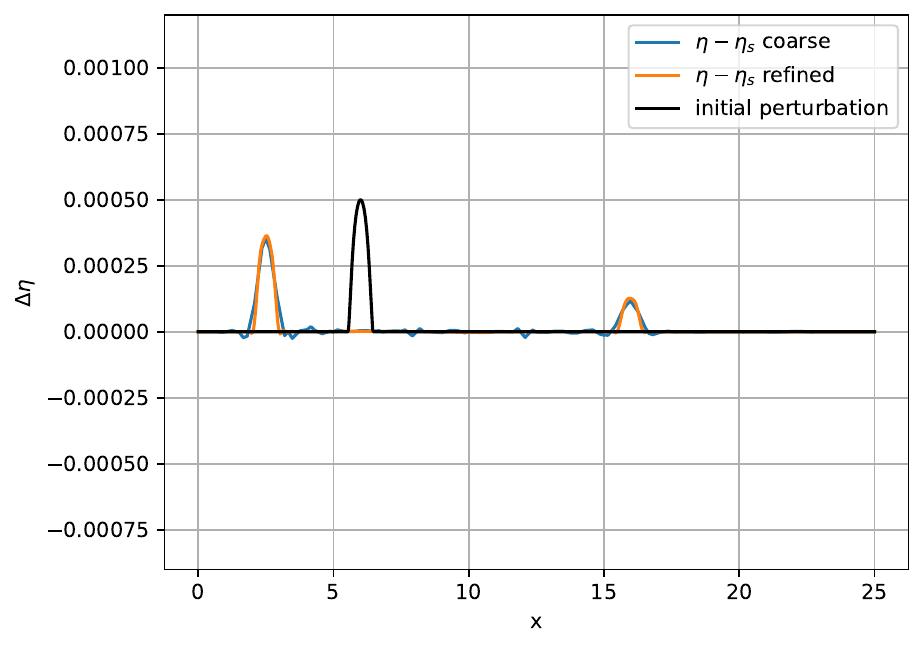}\caption{WB-$\GF$ with jg, PGL4}
				\end{subfigure}
				\caption{Perturbation of non-smooth subcritical steady state with friction: fair comparison between basis functions of different degree with jr and jg. Respectively $30$, $40$ and $60$ elements for PGL4, PGL3 and PGL2 for the coarse meshes and $128$, $256$ and $512$ elements for the refined ones}\label{fig:subwb_compare_PGLn}
			\end{figure}

		\end{itemize}
		
		\RIcolor{
			\subsection{Multidimensional tests}\label{sub:2d_tests}
			In this final section, we report the numerical results for the two-dimensional frictionless SW equations on unstructured triangular meshes.
			The main purpose of this section is to show the possibility to apply some of the presented notions to an unstructured multidimensional framework.

			Let us briefly recall the needed elements characterized to this setting.
			The governing equations read
			\begin{equation}
				\label{eq:sys_2d}
				\frac{\partial}{\partial t}\uvec{u}+div_{\uvec{x}} \boldsymbol{F}(\uvec{u})=\boldsymbol{S}(\uvec{x},\uvec{u}), \quad (\uvec{x},t) \in \Omega\times \mathbb{R}^+_0.
			\end{equation}
			In such a case, $\Omega\subseteq \mathbb{R}^2$ and conserved variables, flux and source are defined as
			\begin{equation}\label{eq:SW}
				\uvec{u}:=\begin{pmatrix}
					H\\
					\uvec{q}
				\end{pmatrix},\quad\uvec{F}(\uvec{u}):=\begin{pmatrix}
					\uvec{q}\\
					\frac{1}{H}\uvec{q}\otimes \uvec{q}+g\frac{H^2}{2}\mathbb{I}
				\end{pmatrix}, \quad \boldsymbol{S}(\uvec{x},\uvec{u}):=-gH\begin{pmatrix}
					0\\
					\nabla_{\uvec{x}} B(\uvec{x})
				\end{pmatrix}.
			\end{equation}
			All the ingredients are defined as in the \RIcolor{one-dimensional} case up to the fact that $\uvec{q}:=H\uvec{v}\in \mathbb{R}^2$ is a vector, with $\uvec{v}:=(v_1,v_2)^T$. 
			
			As already specified in Remark~\ref{rmk:generalization_to_mulitd}, the CG/RD framework described in Section~\ref{chapRD} can be naturally generalized to a multidimensional unstructured framework, and the corresponding semidiscretization is given by
			\begin{align}
				\int_\Omega \biggl( \frac{\partial}{\partial t}\uvec{u}_h + \biggl[div_{\uvec{x}} \boldsymbol{F}(\uvec{u}_h) -\boldsymbol{S}(\uvec{x},\uvec{u}_h) \biggr]_h \biggr) \varphi_i(x)dx +\uvec{ST}_i(\uvec{u}_h) =\uvec{0},  \quad \forall i=1,\dots,I.
				\label{eq:CG_2d}
			\end{align}
			The extension of the reference non-WB framework, presented in Section~\ref{sec:ref_nonwb}, to this context is straightforward.
			
			Concerning the discretization of the spatial part of the PDE, we consider a simple interpolation of $\boldsymbol{F}$ and $\boldsymbol{S}$ in the same polynomial space used for the CG discretization
			\begin{align}
				\biggl[div_{\uvec{x}}   \boldsymbol{F}(\uvec{u}_h) -\boldsymbol{S}(\uvec{x},& \uvec{u}_h) \biggr]_h :=div_{\uvec{x}} \boldsymbol{F}_h-\boldsymbol{S}_h, \label{eq:non_wb_space_discretization1_2d}\\
				\boldsymbol{F}_h:=\sum_{i=1}^I \boldsymbol{F}_i \varphi_i(\uvec{x}),& \quad  \boldsymbol{S}_h:=\sum_{i=1}^I \boldsymbol{S}_i \varphi_i(\uvec{x}),
				\label{eq:non_wb_space_discretization1_2d}
			\end{align}
			while, concerning the stabilization, we consider the multidimensional version of jc~\eqref{eq:CIP_multiD}.
			
			Here, we consider a single WB alternative, given by WB-HS, presented in Section~\ref{sec:WBHS}, coupled with jt.
			With suitable adaptation of the definitions to the multidimensional case, the space discretization WB-HS is defined as
			\begin{align}
				\biggl[div_{\uvec{x}}   \boldsymbol{F} -\boldsymbol{S} \biggr]_h:=div_{\uvec{x}}  \boldsymbol{F}^V_h+\begin{pmatrix}
					0\\
					gH_h\nabla_{\uvec{x}}(H_h+B_h)
				\end{pmatrix},
				\label{eq:WBHS_2d}
			\end{align}
			where the velocity part of the source $\boldsymbol{S}^V_h$ is neglected as we focus here on the frictionless case.
			Note that Remark~\ref{rmk:conservation_WBHS} on the conservative character of such formulation holds also in the multidimensional case.
			The jump stabilization jt is instead obtained by replacing the first component of the conserved variables in~\eqref{eq:CIP_multiD} by $\eta_h$.
			
			In the following tests, we consider the stabilization on the first derivative only ($R:=1$).
			Furthermore, concerning the definition of $\alpha_{f,1}:=\delta_1 \bar{\rho}_f h_f^{2}$, we assume $h_f$ equal to the length of $f$ and $\delta_1:=0.01.$ 
			We will consider B2 basis functions and the adopted time discretization is again given by the bDeCu method presented in Section~\ref{chapDECRD} with CFL$:=0.1.$
			
			\subsubsection{High order accuracy}
			We start by assessing the high order accuracy via a smooth test involving an unsteady compactly supported $C^{6}$ vortex~\cite{micalizzi2023new,ricchiuto2021analytical} in the computational domain $\Omega:=(0,3)\times(0,3)$ without bathymetry $(B\equiv 0)$.

			In particular, the analytical solution is given as a function of the radial distance $r(\uvec{x},t):=\norm{\uvec{x}-\uvec{x}_c(t)}_2$ from the center of the vortex $\uvec{x}_c(t):=\uvec{x}_0 + t\cdot(1,1)^T$, with $\uvec{x}_0=(1,1)^T$, and it reads
			\begin{equation}
				\uvec{u}(\uvec{x},t) := \uvec{u}^\infty + \begin{cases}
					\uvec{u}_{r_0}(r), & \text{ if }r <r_0,\\
					0, & \text{ otherwise,}
				\end{cases}
			\end{equation}
			where $\uvec{u}^{\infty} :=(1,1,1)^T$, and 
			\begin{align}
				&\uvec{u}_{r_0}(r) := \begin{pmatrix}
					\frac{1}{g}\left(\frac{\Gamma}{\omega}\right)^2 \left(\lambda(\omega r)-\lambda(\pi)\right)\\
					\Gamma\left(1+\cos(\omega r)\right)^2(x_2-x_{c,2})\\
					- \Gamma\left(1+\cos(\omega r)\right)^2(x_1-x_{c,1}) 
				\end{pmatrix},\; \quad \Gamma := \frac{12\pi\sqrt{g\Delta H}}{r_0\sqrt{315\pi^2 - 2048}}.
			\end{align}
			We have $\omega := \frac{\pi}{r_0}$ and
			\begin{equation}
				\begin{split}
					\lambda(s)& := \frac{20}{3}\cos(s) + \frac{27}{16}\cos(s)^2 + \frac{4}{9}\cos(s)^3+ \frac{1}{16}\cos(s)^4 +\frac{20}{3}s\sin(s) \\
					& + \frac{35}{16}s^2 + \frac{27}{8}s\cos(s)\sin(s) +\frac{4}{3}s\cos(s)^2\sin(s) + \frac{1}{4}s\cos(s)^3\sin(s).
					\label{eq:supp_vort}	
				\end{split}
			\end{equation}
			We assume $r_0:=1$ and $\Delta H := 0.1$. A final time $T_f:=1$ and inflow-outflow boundary conditions are considered.
			
			The results of the convergence analysis are reported in Table~\ref{tab:conv_analysis_vortex_2d}.
			The expected third order rate of convergence, with respect to the mesh parameter $h$, is obtained for both the considered discretizations and the errors are very similar.
			Note that, as no bathymetry is present in this test, jt reduces to jc, hence, the only difference is in the adopted discretization of the spatial part of the PDE.

			\begin{table}[htbp]
				\centering
				\caption{Two-dimensional unsteady vortex: Convergence analysis with B2}
				\label{tab:conv_analysis_vortex_2d}
				\begin{subtable}{\linewidth}
					\centering
					\caption{Reference non-WB}
					\begin{tabular}{ccccccc}
						\toprule
						$h$ & $L^1 \, \text{error} \, H$ & $\text{Order} \, H$ & $L^1 \, \text{error} \, Hv_1$ & $\text{Order} \, Hv_1$ & $L^1 \, \text{error} \, Hv_2$ & $\text{Order} \, Hv_2$ \\
						\midrule
						0.4 & 1.997e-02 & - & 2.647e-01 & - & 2.994e-01 & - \\
						0.2 & 3.712e-03 & 2.428 & 4.437e-02 & 2.577 & 4.622e-02 & 2.695 \\
						0.1 & 5.363e-04 & 2.791 & 6.467e-03 & 2.778 & 6.523e-03 & 2.825 \\
						0.05 & 8.167e-05 & 2.715 & 8.451e-04 & 2.936 & 8.717e-04 & 2.904 \\
						\bottomrule
					\end{tabular}
				\end{subtable}
				
				\bigskip
				
				\begin{subtable}{\linewidth}
					\centering
					\caption{WB-HS with jt}
					\begin{tabular}{ccccccc}
						\toprule
						$h$ & $L^1 \, \text{error} \, H$ & $\text{Order} \, H$ & $L^1 \, \text{error} \, Hv_1$ & $\text{Order} \, Hv_1$ & $L1 \, \text{error} \, Hv_2$ & $\text{Order} \, Hv_2$ \\
						\midrule
						0.4 & 2.000e-02 & - & 2.648e-01 & - & 2.994e-01 & - \\
						0.2 & 3.713e-03 & 2.429 & 4.437e-02 & 2.577 & 4.622e-02 & 2.696 \\
						0.1 & 5.365e-04 & 2.791 & 6.467e-03 & 2.778 & 6.523e-03 & 2.825 \\
						0.05 & 8.168e-05 & 2.716 & 8.451e-04 & 2.936 & 8.717e-04 & 2.904 \\
						\bottomrule
					\end{tabular}
				\end{subtable}
				
			\end{table}
			
			\subsubsection{Exact well-balancing for lake at rest}
			Let us consider, on the computational domain $\Omega:=(0,3)\times(0,3)$, the following bathymetry, characterized by a smooth bump, and defined in terms of the radial distance $r(\uvec{x}):=\norm{\uvec{x}-\uvec{x}_c}_2$ from the center of the bump $\uvec{x}_c=(1.5,1.5)^T$
			\begin{equation}
				B(\uvec{x}):=\begin{cases}
					0.2\exp{\left(1-\frac{1}{1-\left(\frac{r}{r_0}\right)^2}\right)}, & \text{if}~r<r_0,\\
					0, & \text{otherwise}.
				\end{cases}
				\label{eq:smooth_bathymetry_2d}
			\end{equation}
			%
			The lake at rest steady state under investigation is given by 
			\begin{equation}
				\eta=H+B\equiv \overline{\eta}, \quad \uvec{v}\equiv \uvec{0},
			\end{equation}
			with $\overline{\eta}:=1$. We set the final time $T_f:=0.2$ and assume inflow-outflow boundary conditions.
			
			Rather than a single test, we performed multiple simulations on several meshes with different mesh parameter.
			This allowed to verify not only the WB character of the proposed discretization, WB-HS coupled with jt, but also the expected third order convergence of the reference non-WB framework, assessing thus the correct implementation of the bathymetry.
			The results are reported in Table~\ref{tab:conv_analysis_latr_2d}. As expected, WB-HS with jt is able to exactly capture the steady state up to machine precision. On the other hand, the reference non-WB framework produces errors which scale with the expected order of accuracy when the mesh is refined.

			\begin{table}[htbp]
				\centering
				\caption{Two-dimensional lake at rest: Convergence analysis with B2}
				\label{tab:conv_analysis_latr_2d}
				\begin{subtable}{\linewidth}
					\centering
					\caption{Reference non-WB}
					\begin{tabular}{ccccccc}
						\toprule
						$h$ & $L^1 \, \text{error} \, H$ & $\text{Order} \, H$ & $L^1 \, \text{error} \, Hv_1$ & $\text{Order} \, Hv_1$ & $L^1 \, \text{error} \, Hv_2$ & $\text{Order} \, Hv_2$ \\
						\midrule
						0.2   & 1.171e-02 & - & 7.216e-02 & - & 6.432e-02 & - \\
						0.1   & 2.021e-03 & 2.535 & 1.299e-02 & 2.474 & 1.374e-02 & 2.227 \\
						0.05  & 2.688e-04 & 2.910 & 1.958e-03 & 2.730 & 2.143e-03 & 2.681 \\
						0.025 & 2.087e-05 & 3.687 & 2.126e-04 & 3.203 & 2.111e-04 & 3.344 \\
						\bottomrule
					\end{tabular}
				\end{subtable}

				\bigskip

				\begin{subtable}{\linewidth}
					\centering
					\caption{WB-HS with jt}
					\begin{tabular}{cccc}
						\toprule
						$h$ & $L^1 \, \text{error} \, H$ & $L^1 \, \text{error} \, Hv_1$ & $L^1 \, \text{error} \, Hv_2$ \\
						\midrule
						0.2   & 2.789e-17 & 4.297e-15 & 4.228e-15 \\
						0.1   & 6.985e-17 & 6.955e-15 & 7.735e-15 \\
						0.05  & 5.347e-16 & 1.302e-14 & 1.420e-14 \\
						0.025 & 8.808e-16 & 1.941e-14 & 2.150e-14 \\
						\bottomrule
					\end{tabular}
				\end{subtable}
				
			\end{table}

			\subsubsection{Evolution of small perturbations of lake at rest}
			Finally, we perform a perturbation analysis to show the ability of WB-HS coupled with jt to capture the correct dynamics of little perturbations of lake at rest, avoiding spurious oscillations, even in an unstructured multidimensional framework.

			We consider the steady state of the previous section with the following small perturbation
			\begin{equation}
				\eta(\uvec{x}):=\begin{cases}
					\eta_{s}+A, &\text{if}~\norm{\uvec{x}-\uvec{x}_p}_2<r_p,\\
					\eta_{s}, & \text{otherwise},
				\end{cases}
				\label{eq:perturbation_lake_at_rest_2d}
			\end{equation}
			with $\eta_{s}:=\overline{\eta}\equiv 1$, $A:= 10^{-3}$, $\uvec{x}_p:=(1.2,1.2)^T$ and $r_p:=0.1.$
			We consider the same final time $T_f$ and boundary conditions as in the unperturbed test.
			
			The evolution of the perturbation obtained, on a coarse tessellation with mesh parameter $h=0.1$, through the reference non-WB setting and WB-HS coupled with jt are reported in Figure~\ref{fig:perturbation_analysis_2d}.
			As one can clearly see, this is very well captured by the WB method, while, the reference non-WB scheme suffers from spurious oscillations, which completely overwhelm the perturbation.

			\begin{figure}
				\centering
				\begin{subfigure}{0.45\textwidth}
					\includegraphics[trim= 235 0 0 105, clip,width=\textwidth]{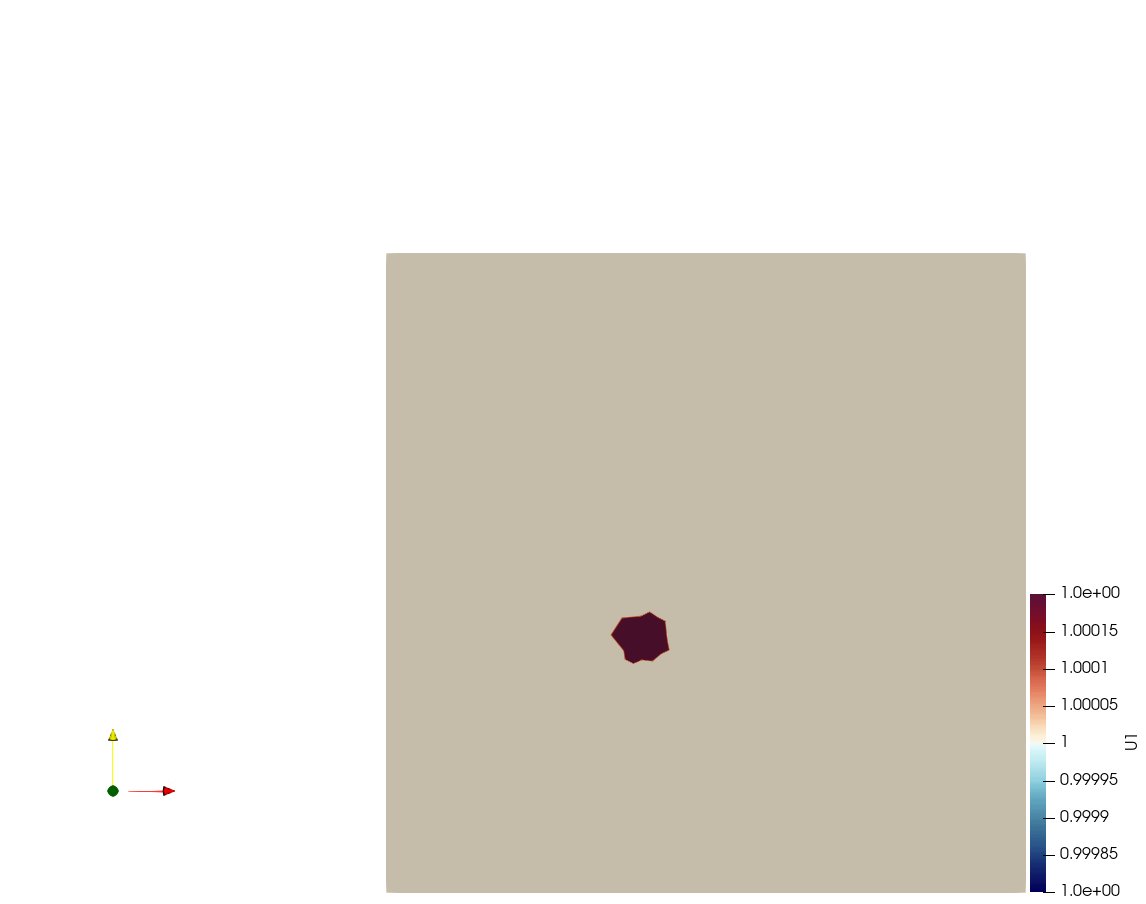}\caption{Initial condition}
				\end{subfigure}\\
				\begin{subfigure}{0.45\textwidth}
					\includegraphics[trim= 235 0 0 105, clip,width=\textwidth]{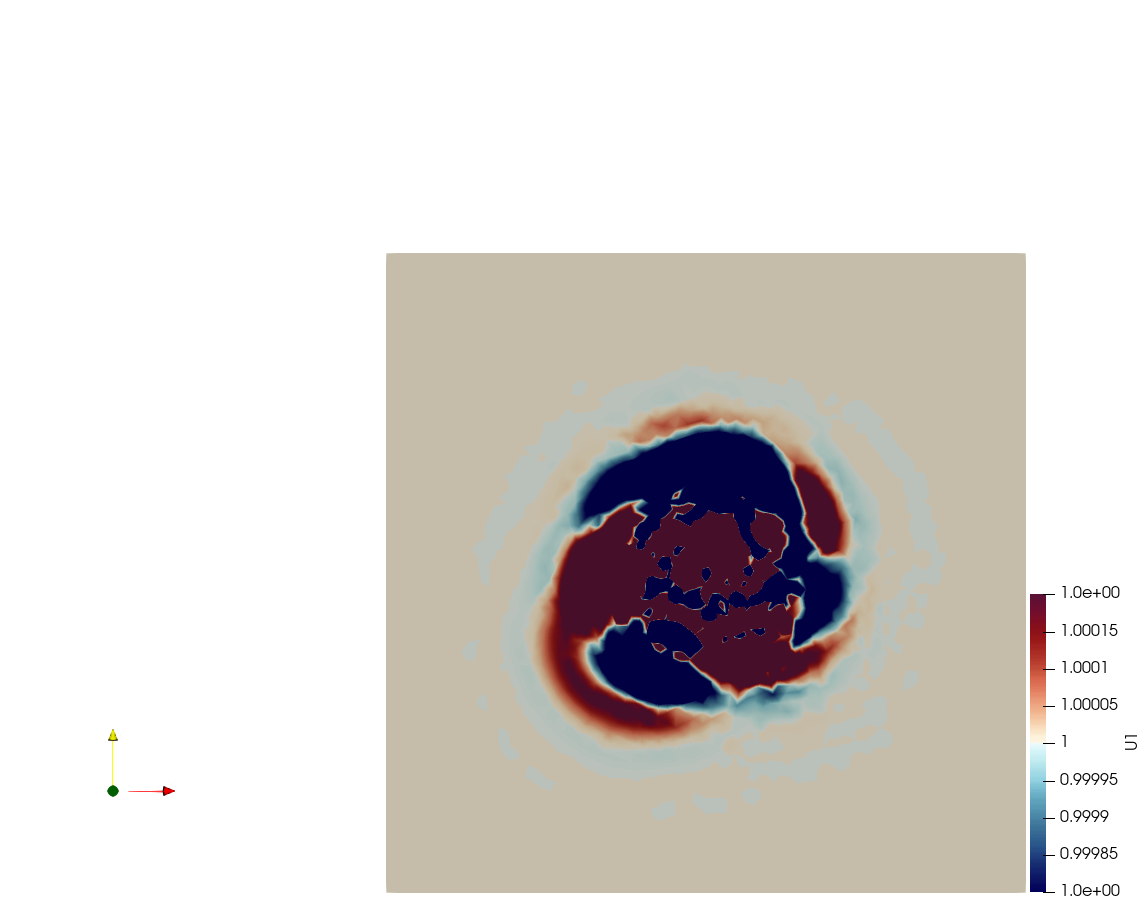}\caption{$t=0.1$}
				\end{subfigure}\quad
				\begin{subfigure}{0.45\textwidth}
					\includegraphics[trim= 235 0 0 105, clip,width=\textwidth]{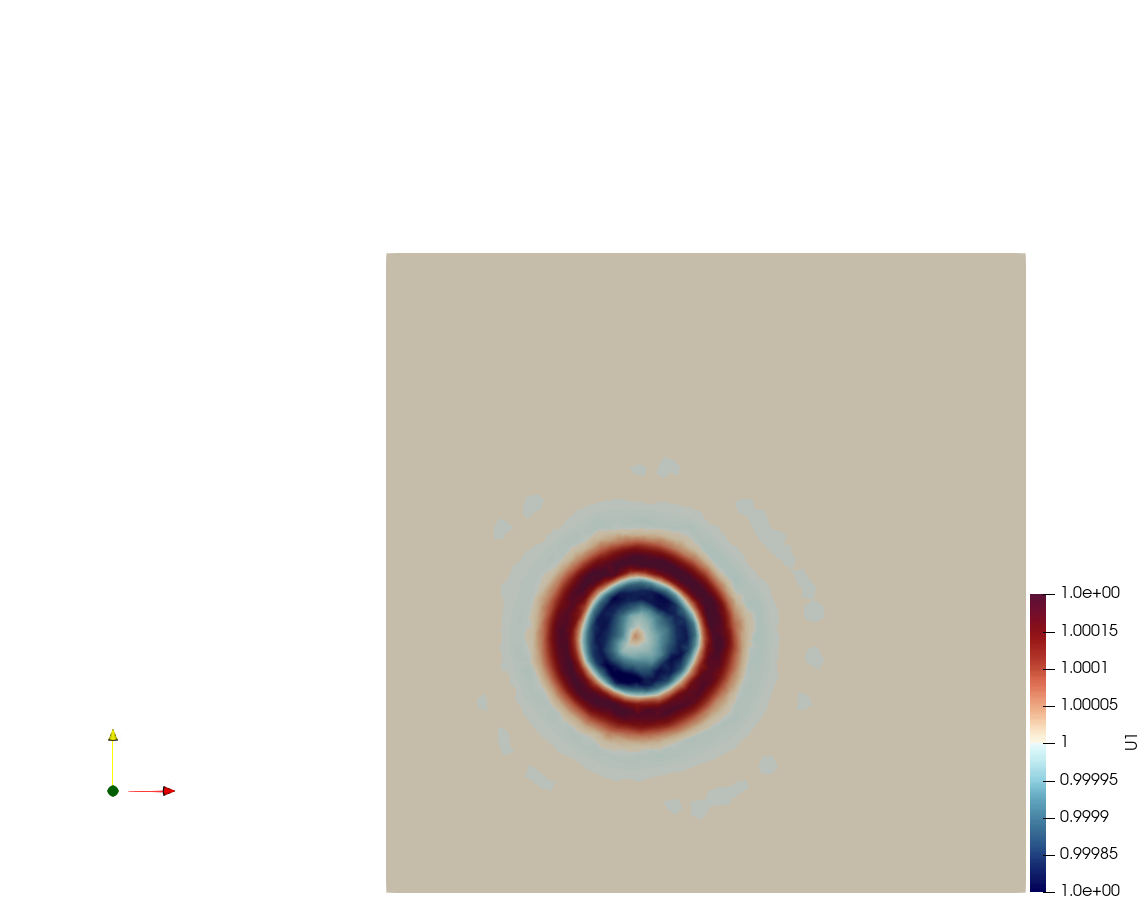}\caption{$t=0.1$}
				\end{subfigure}\\
				\begin{subfigure}{0.45\textwidth}
					\includegraphics[trim= 235 0 0 105, clip,width=\textwidth]{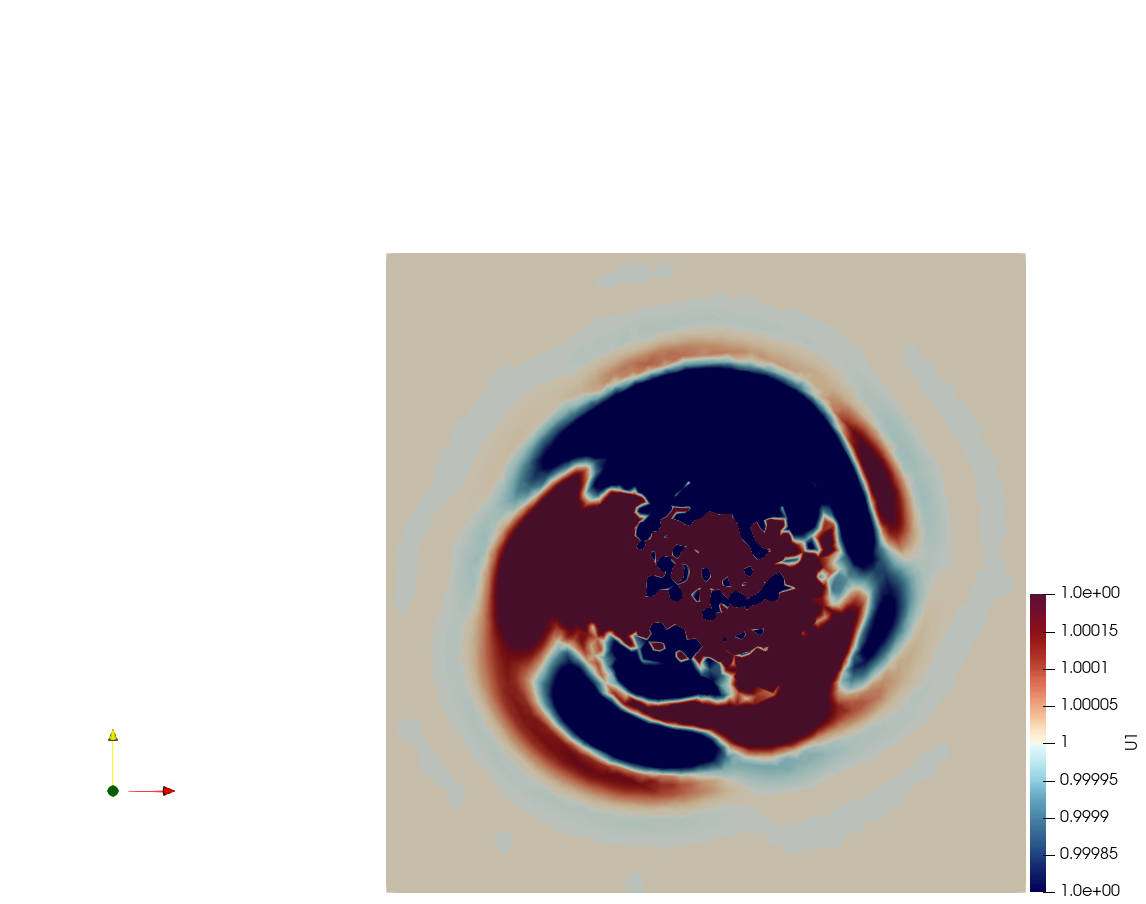}\caption{$t=0.2$}
				\end{subfigure}\quad
				\begin{subfigure}{0.45\textwidth}
					\includegraphics[trim= 235 0 0 105, clip,width=\textwidth]{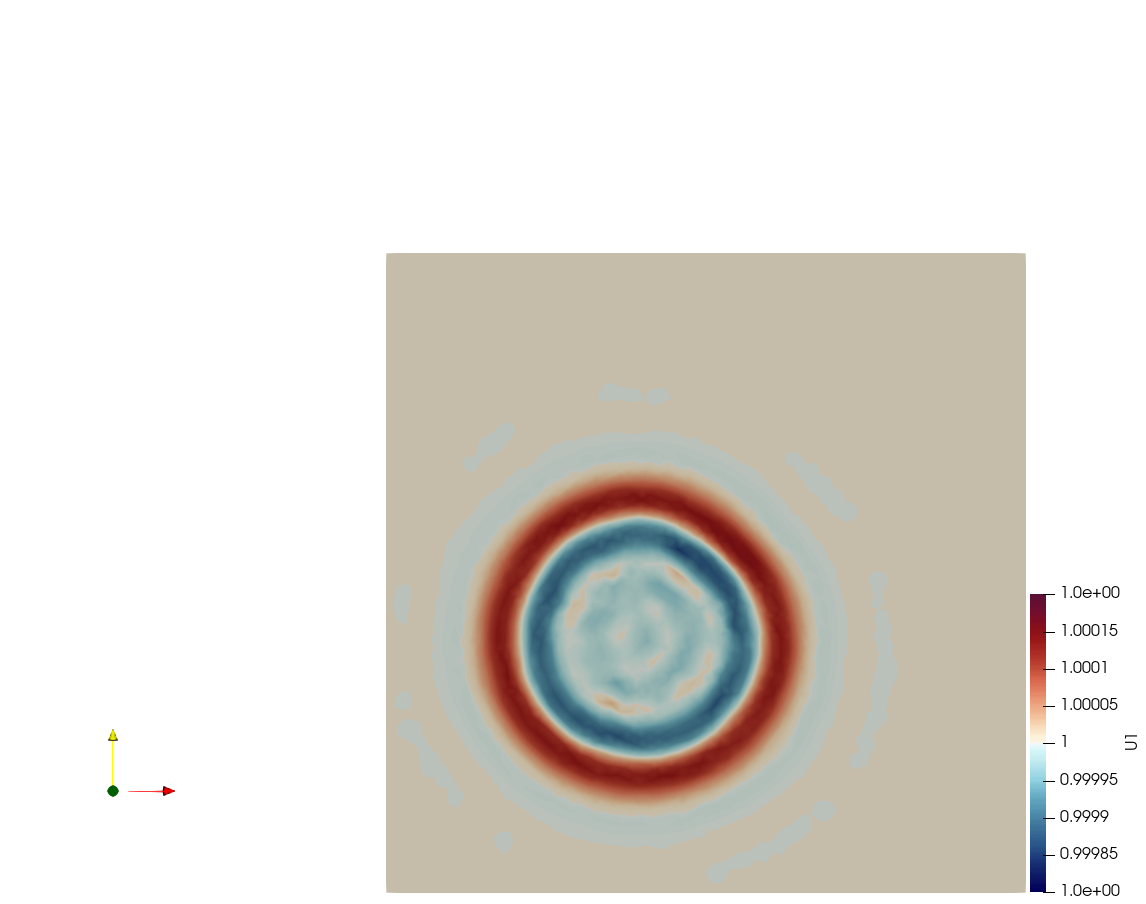}\caption{$t=0.2$}
				\end{subfigure}\caption{Perturbation of two-dimensional lake at rest: comparison between the reference non-WB setting (on the left) and WB-HS coupled with jt (on the right). Variable $\eta$ at different times}\label{fig:perturbation_analysis_2d}
			\end{figure}

		}
		
		\section{Conclusions and further developments}\label{chapConc}
		In this work, we have analyzed the performance of two WB space discretizations and four novel WB CIP stabilizations. All the elements are specifically designed to exactly preserve the lake at rest steady state. In particular, two stabilizations, jr and jg, address the problem of well-balancing toward general steady states, being they based on a discretization of the steady equilibrium. 
		
		The numerical results confirm the exact well-balancing with respect to the lake at rest and the arbitrary high order accuracy. Moreover, in \RIcolor{one-dimensional} tests, jr and jg have been shown able to better handle other general steady states in terms of superconvergences on smooth tests, with a strong propension in retrieving the constant momentum up to machine precision, and ability to capture the evolution of small perturbations of such steady states.
		Further numerical investigations have confirmed the possibility to apply some of the proposed elements to an unstructured multidimensional setting.
		
		Possible future developments, planned for future works, are a deeper investigation of the WB properties of the introduced space discretizations and jump stabilizations towards general steady states in a multidimensional framework, and the extension of some of the presented elements to the Euler equations with gravity.


\section*{Acknowledgements}
	L. Micalizzi has been funded by the SNF grant 200020\_204917 and by the LeRoy B. Martin, Jr. Distinguished Professorship Foundation. 
	R. Abgrall has been funded by the SNF grant 200020\_204917. 
	M. Ricchiuto is a member of the CARDAMOM team at INRIA University of Bordeaux.

\appendix
\section{Proof of Proposition \ref{prop:CIP_RD}}\label{app:CIP_RD}

\begin{proof}
	The first point is a straightforward consequence of the assumptions made on the basis functions. 
	In particular, we can write
	\begin{align}
		\begin{split}
			\sum_{\uvec{x}_i \in K}\uvec{ST}_i^K(\uvec{u}_h)&=\sum_{\uvec{x}_i \in K}\sum_{\substack{f\subset\partial{K}\\f \in \faces}}\sum_{r=1}^R \alpha_{f,r} \int_f  \nabla^r_{\uvec{\nu}_f}  \varphi_i\vert_K  \Big\llbracket \nabla^r_{\uvec{\nu}_f} \uvec{u}_h \Big\rrbracket   d\sigma(\uvec{x})\\
			&=\sum_{\substack{f\subset\partial{K}\\f \in \faces}}\sum_{r=1}^R \alpha_{f,r} \int_f  \nabla^r_{\uvec{\nu}_f}\left(\sum_{\uvec{x}_i \in K}  \varphi_i \vert_K\right)  \Big\llbracket \nabla^r_{\uvec{\nu}_f} \uvec{u}_h \Big\rrbracket   d\sigma(\uvec{x}),
		\end{split}
		\label{eq:sum_to_0}
	\end{align}
	which is indeed equal to zero because of \eqref{eq:local_normalization_bases}.

	Now, let us deal with the second point. In order to light the notation, without loss of generality, we will focus on the $r$-th derivative only, dropping the sum over the orders, and we will neglect the factor $\alpha_{f,r}$. Then, what we want to show is the equivalence
	\begin{equation}
		\sum_{f\in \faces} \int_f \Big\llbracket \nabla^r_{\uvec{\nu}_f} \varphi_i \Big\rrbracket  \Big\llbracket \nabla^r_{\uvec{\nu}_f} \uvec{u}_h \Big\rrbracket  d\sigma(\uvec{x})=\sum_{K \in K_i}\sum_{\substack{f\subset\partial{K}\\f \in \faces}}  \int_f  \nabla^r_{\uvec{\nu}_f} \varphi_i\vert_K  \Big\llbracket \nabla^r_{\uvec{\nu}_f} \uvec{u}_h \Big\rrbracket   d\sigma(\uvec{x}).   
		\label{eq:CIP_thesis}
	\end{equation}
	
	We start by observing that, in a conformal tessellation, all the faces $f\in \faces$ shared by the elements are given by the intersection between two neighboring elements $K$ and $K'$. Thanks to this, the left-hand side of \eqref{eq:CIP_thesis} can be written as
	\begin{equation}
		\begin{split}
			\sum_{f\in \faces} \int_f \Big\llbracket \nabla^r_{\uvec{\nu}_f} \varphi_i \Big\rrbracket  \Big\llbracket \nabla^r_{\uvec{\nu}_f} \uvec{u}_h \Big\rrbracket  d\sigma(\uvec{x})=
			\sum_{K\in\tess}\sum_{\substack{K'\in\tess \\ K' \neq K}} \frac{1}{2}\int_{K \cap K'} \Big\llbracket \nabla^r_{\uvec{\nu}_f} \varphi_i \Big\rrbracket  \Big\llbracket \nabla^r_{\uvec{\nu}_f} \uvec{u}_h \Big\rrbracket  d\sigma(\uvec{x}),
		\end{split}
		\label{jump1}
	\end{equation}
	where, with abuse of notation, we stick to $\uvec{\nu}_f$ to indicate the normal to the face shared by $K$ and $K'$ at the right-hand side. We remark that the orientation of $\uvec{\nu}_f$ is not relevant in this context.
	
	\begin{remark}
		In the previous equation \eqref{jump1}, the sums over $K$ and $K'$ are meant over all the elements of the tessellation: whenever two different elements $K^a$ and $K^b$ do not share any face, then $K^a \cap K^b = \emptyset$ and their contribution is zero. Instead, when they share a face $f$, the contribution of that face is counted twice: once when $K=K^a$ and $K'=K^b$, once when $K=K^b$ and $K'=K^a$. This is why we have to put $\frac{1}{2}$.
		We remark that, due to the assumption of conformal tessellation, any face $f$ not belonging to the boundary is shared exactly by two elements. 
	\end{remark}
	

	Concerning the direction of evaluation of the jump, not relevant in \eqref{eq:CIP_multiD}, we assume here
	$\llbracket z \rrbracket:=z\vert_K-z\vert_{K'}.$
	Thus, from \eqref{jump1}, one gets 
	\begin{subequations}
		\begin{align}
			\sum_{K\in\tess}\sum_{\substack{K'\in\tess \\ K' \neq K}} \frac{1}{2}&\int_{K \cap K'} \Big\llbracket \nabla^r_{\uvec{\nu}_f} \varphi_i \Big\rrbracket  \Big\llbracket \nabla^r_{\uvec{\nu}_f} \uvec{u}_h \Big\rrbracket  d\sigma(\uvec{x})\\
			&=\sum_{K\in\tess}\sum_{\substack{K'\in\tess \\ K' \neq K}} \frac{1}{2}\int_{K \cap K'}  \left( \nabla^r_{\uvec{\nu}_f}\varphi_i\vert_K-\nabla^r_{\uvec{\nu}_f}\varphi_i\vert_{K'} \right)  \Big\llbracket \nabla^r_{\uvec{\nu}_f} \uvec{u}_h \Big\rrbracket d\sigma(\uvec{x})\\
			&=\sum_{K\in\tess}\sum_{\substack{K'\in\tess \\ K' \neq K}} \frac{1}{2}\int_{K \cap K'}  \nabla^r_{\uvec{\nu}_f}\varphi_i\vert_K \Big\llbracket \nabla^r_{\uvec{\nu}_f} \uvec{u}_h \Big\rrbracket d\sigma(\uvec{x})\\
			&\quad\quad-\sum_{K\in\tess}\sum_{\substack{K'\in\tess \\ K' \neq K}} \frac{1}{2}\int_{K \cap K'}  \nabla^r_{\uvec{\nu}_f}\varphi_i\vert_{K'} \Big\llbracket \nabla^r_{\uvec{\nu}_f} \uvec{u}_h \Big\rrbracket d\sigma(\uvec{x})\\
			&=\sum_{K\in\tess}\sum_{\substack{K'\in\tess \\ K' \neq K}} \frac{1}{2}\int_{K \cap K'} \nabla^r_{\uvec{\nu}_f}\varphi_i\vert_K\left( \nabla^r_{\uvec{\nu}_f} \uapp\vert_{K}-\nabla^r_{\uvec{\nu}_f}\uapp\vert_{K'} \right) d\sigma(\uvec{x})\\
			&\quad\quad-\sum_{K\in\tess}\sum_{\substack{K'\in\tess \\ K' \neq K}} \frac{1}{2}\int_{K \cap K'}  \nabla^r_{\uvec{\nu}_f}\varphi_i\vert_{K'} \left( \nabla^r_{\uvec{\nu}_f} \uapp\vert_{K}-\nabla^r_{\uvec{\nu}_f}\uapp\vert_{K'} \right)  d\sigma(\uvec{x}).
			\label{jump2}
		\end{align}
	\end{subequations}

	Let us focus on the term at \eqref{jump2}. By a simple renaming of $K$ and $K'$ in such a way to switch the indices of the sums, by entering the sign $-$ inside the integral and from the fact that $K \cap K' =K' \cap K $, we get
	\begin{subequations}
		\begin{align}
			&-\sum_{K\in\tess}\sum_{\substack{K'\in\tess \\ K' \neq K}} \frac{1}{2}\int_{K \cap K'}  \nabla^r_{\uvec{\nu}_f}\varphi_i\vert_{K'}\left( \nabla^r_{\uvec{\nu}_f} \uapp\vert_{K}-\nabla^r_{\uvec{\nu}_f}\uapp\vert_{K'} \right)  d\sigma(\uvec{x})\\
			&=-\sum_{K'\in\tess}\sum_{\substack{K\in\tess\\ K \neq K'}} \frac{1}{2}\int_{K' \cap K} \nabla^r_{\uvec{\nu}_f}\varphi_i\vert_{K}\left( \nabla^r_{\uvec{\nu}_f} \uapp\vert_{K'}-\nabla^r_{\uvec{\nu}_f}\uapp\vert_{K} \right) d\sigma(\uvec{x})\\
			&=\sum_{K'\in\tess}\sum_{\substack{K\in\tess\\ K \neq K'}} \frac{1}{2}\int_{K' \cap K} \nabla^r_{\uvec{\nu}_f}\varphi_i\vert_{K}\left( \nabla^r_{\uvec{\nu}_f} \uapp\vert_{K}-\nabla^r_{\uvec{\nu}_f}\uapp\vert_{K'} \right) d\sigma(\uvec{x})\\
			&=\sum_{K'\in\tess}\sum_{\substack{K\in\tess\\ K \neq K'}} \frac{1}{2}\int_{K \cap K'} \nabla^r_{\uvec{\nu}_f}\varphi_i\vert_{K}\left( \nabla^r_{\uvec{\nu}_f} \uapp\vert_{K}-\nabla^r_{\uvec{\nu}_f}\uapp\vert_{K'} \right) d\sigma(\uvec{x})\\
			&=\sum_{K\in\tess}\sum_{\substack{K'\in\tess \\ K' \neq K}} \frac{1}{2}\int_{K \cap K'} \nabla^r_{\uvec{\nu}_f}\varphi_i\vert_{K}\left( \nabla^r_{\uvec{\nu}_f} \uapp\vert_{K}-\nabla^r_{\uvec{\nu}_f}\uapp\vert_{K'} \right) d\sigma(\uvec{x}),
			\label{usefulinjump}
		\end{align}

		where the last equality comes from the fact that in this case $\sum_{K\in\tess}\sum_{\substack{K'\in\tess \\ K' \neq K}}$ is equivalent to $\sum_{K'\in\tess}\sum_{\substack{K\in\tess\\ K \neq K'}}$.
		By replacing then \eqref{jump2} with \eqref{usefulinjump}, we get
		\begin{align}
			\begin{split}
				&\sum_{K\in\tess}\sum_{\substack{K'\in\tess \\ K' \neq K}} \frac{1}{2}\int_{K \cap K'} \nabla^r_{\uvec{\nu}_f}\varphi_i\vert_K\left( \nabla^r_{\uvec{\nu}_f} \uapp\vert_{K}-\nabla^r_{\uvec{\nu}_f}\uapp\vert_{K'} \right) d\sigma(\uvec{x})\\
				&\quad\quad-\sum_{K\in\tess}\sum_{\substack{K'\in\tess \\ K' \neq K}} \frac{1}{2}\int_{K \cap K'}  \nabla^r_{\uvec{\nu}_f}\varphi_i\vert_{K'}\left( \nabla^r_{\uvec{\nu}_f} \uapp\vert_{K}-\nabla^r_{\uvec{\nu}_f}\uapp\vert_{K'} \right)  d\sigma(\uvec{x})\\
				&=\sum_{K\in\tess}\sum_{\substack{K'\in\tess \\ K' \neq K}} \frac{1}{2}\int_{K \cap K'} \nabla^r_{\uvec{\nu}_f}\varphi_i\vert_K\left( \nabla^r_{\uvec{\nu}_f} \uapp\vert_{K}-\nabla^r_{\uvec{\nu}_f}\uapp\vert_{K'} \right) d\sigma(\uvec{x})\\
				&\quad\quad +\sum_{K\in\tess}\sum_{\substack{K'\in\tess \\ K' \neq K}} \frac{1}{2}\int_{K \cap K'} \nabla^r_{\uvec{\nu}_f}\varphi_i\vert_{K}\left( \nabla^r_{\uvec{\nu}_f} \uapp\vert_{K}-\nabla^r_{\uvec{\nu}_f}\uapp\vert_{K'} \right) d\sigma(\uvec{x})\\
				&=\sum_{K\in\tess}\sum_{\substack{K'\in\tess \\ K' \neq K}} \int_{K \cap K'} \nabla^r_{\uvec{\nu}_f}\varphi_i\vert_{K}\left( \nabla^r_{\uvec{\nu}_f} \uapp\vert_{K}-\nabla^r_{\uvec{\nu}_f}\uapp\vert_{K'} \right) d\sigma(\uvec{x})\\
				&=\sum_{K\in\tess}\sum_{\substack{K'\in\tess \\ K' \neq K}} \int_{K \cap K'} \nabla^r_{\uvec{\nu}_f}\varphi_i\vert_{K} \Big\llbracket \nabla^r_{\uvec{\nu}_f} \uapp\Big\rrbracket d\sigma(\uvec{x})\\
				&=\sum_{K\in\tess}\sum_{\substack{f\subset\partial{K}\\f \in \faces}} \int_{f} \nabla^r_{\uvec{\nu}_f}\varphi_i\vert_{K}  \Big\llbracket \nabla^r_{\uvec{\nu}_f} \uapp\Big\rrbracket d\sigma(\uvec{x}).
			\end{split}
			\label{almostequivCIPDoDu}
		\end{align}
		
		Now, since $\varphi_i$ has support in the union of elements containing the node $\uvec{x}_i$ to which it is associated, i.e., it is not identically zero just in the elements $K\in K_i$, we can write 
		\begin{align}
			\begin{split}
				\sum_{K\in\tess}\sum_{\substack{f\subset\partial{K}\\f \in \faces}} \int_{f} \nabla^r_{\uvec{\nu}_f}\varphi_i\vert_{K}  \Big\llbracket \nabla^r_{\uvec{\nu}_f} \uapp\Big\rrbracket d\sigma(\uvec{x})=\sum_{K\in K_i}\sum_{\substack{f\subset\partial{K}\\f \in \faces}} \int_{f} \nabla^r_{\uvec{\nu}_f}\varphi_i\vert_{K}  \Big\llbracket \nabla^r_{\uvec{\nu}_f} \uapp\Big\rrbracket d\sigma(\uvec{x}).
			\end{split}
		\end{align}
		
		With this, we have completed the proof of the equivalence \eqref{eq:CIP_thesis}.
	\end{subequations}
\end{proof}

\section{Proof of Proposition \ref{prop:G_lake_at_rest}}\label{app:WB_proof}
\begin{proof}
	\begin{subequations}
		In the context of a lake at rest steady state, the velocity part of the flux and of the source are zero and the considered global flux values \eqref{eq:Gh1}-\eqref{eq:Gh2} reduce to 
		\begin{align}
			\boldsymbol{G}_h(x_i) =\boldsymbol{F}_h(x_i)+&\uvec{R}_h(x_i), \label{eq:GF_1_lake_at_rest}\\
			\boldsymbol{F}_h(x_i)=\sum_{j=1}^I\uvec{F}^{HS}_j\varphi_j(x_i)=\begin{pmatrix}
				0\\
				\left[\frac{gH^2}{2}\right]_h(x_i)
			\end{pmatrix}, \quad \textcolor{black}{\uvec{R}_h}(x_i)=&-\int^{x_i}_{x_L}\left[ -\begin{pmatrix}
				0\\
				\left[gH\frac{\partial}{\partial x} B\right]_h(s)
			\end{pmatrix} \right] ds. \label{eq:GF_2_lake_at_rest}
		\end{align}
		We want to prove that, in such a case, $\uvec{G}_h(x_i)\equiv const$ $\forall i$. Actually, we can see that the first component is identically zero; therefore, let us consider the second component only
		\begin{align}
			G_{h,2}(x_i)&=\left[\frac{gH^2}{2}\right]_h(x_i)+\int^{x_i}_{x_L}\left[gH\frac{\partial}{\partial x} B\right]_h(s)ds\\
			&=\left[\frac{gH^2}{2}\right]_h(x_i)+\int^{x_i}_{x_L}\left(\left[ g (H_h+B_h) \frac{\partial}{\partial x} B_h \right]_h(s)- \frac{\partial}{\partial x} \left[ \frac{gB^2}{2}\right]_h(s)\right)ds.
			\label{eq:second_component_of_G}
		\end{align}

		Let us focus on $K_1$, the leftmost element of the tessellation, and let us consider $x_i\in K_1$. Through basic analysis, thanks to the linearity of the interpolation and to the fact that the total height is constant ($\eta \equiv \overline{\eta}$) for lake at rest, the integral in \eqref{eq:second_component_of_G} can be rewritten as
		\begin{align}
			&\int^{x_i}_{x_L}\left(\left[ g (H_h+B_h) \frac{\partial}{\partial x} B_h \right]_h(s)- \frac{\partial}{\partial x} \left[ \frac{gB^2}{2}\right]_h(s)\right)ds\\
			&=\int^{x_i}_{x_L}\left(\left[ g (H_h+B_h) \frac{\partial}{\partial x} B_h \right]_h(s)\right)ds -  \left[ \frac{gB^2}{2}\right]_h(x_i)+ \left[ \frac{gB^2}{2}\right]_h(x_L)\\
			&=\int^{x_i}_{x_L}\left(\left[ g \overline{\eta} \frac{\partial}{\partial x} B_h \right]_h(s)\right)ds -  \left[ \frac{gB^2}{2}\right]_h(x_i)+ \left[ \frac{gB^2}{2}\right]_h(x_L)\\
			&=g \overline{\eta}\int^{x_i}_{x_L}\left(\left[ \frac{\partial}{\partial x} B_h \right]_h(s)\right)ds -  \left[ \frac{gB^2}{2}\right]_h(x_i)+ \left[ \frac{gB^2}{2}\right]_h(x_L).
			\label{eq:integral}
		\end{align}
		Now, we have a crucial passage: since the interpolation $B_h$, restricted to $K_1$, lives in the space of the polynomials of degree $M$, its derivative lives in the space of the polynomials of degree $M-1$ and, hence, it can be interpolated exactly through the basis functions $\varphi_i$ of degree $M$ with support in $K_1$ and so $\left[ \frac{\partial}{\partial x} B_h \right]_h=\frac{\partial}{\partial x} B_h$. 
		This allows to recast \eqref{eq:integral} as
		\begin{align}
			\begin{split}
				&g \overline{\eta}\int^{x_i}_{x_L}\left(\left[ \frac{\partial}{\partial x} B_h \right]_h(s)\right)ds -  \left[ \frac{gB^2}{2}\right]_h(x_i)+ \left[ \frac{gB^2}{2}\right]_h(x_L)\\
				&=g \overline{\eta}\int^{x_i}_{x_L}\left( \frac{\partial}{\partial x} B_h (s)\right)ds -  \left[ \frac{gB^2}{2}\right]_h(x_i)+ \left[ \frac{gB^2}{2}\right]_h(x_L)\\
				&=g \overline{\eta}\left(B_h (x_i)-B_h (x_L) \right) - \left[ \frac{gB^2}{2}\right]_h(x_i)+ \left[ \frac{gB^2}{2}\right]_h(x_L).
			\end{split}
		\end{align}
		
		Coming back to \eqref{eq:second_component_of_G}, thanks to the fact that $H=\overline{\eta}-B$, we have
		
		\begin{align}
			\begin{split}
				G_{h,2}(x_i)&=\left[\frac{gH^2}{2}\right]_h(x_i)+\int^{x_i}_{x_L}\left(\left[ g (H_h+B_h) \frac{\partial}{\partial x} B_h \right]_h(s)- \frac{\partial}{\partial x} \left[ \frac{gB^2}{2}\right]_h(s)\right)ds\\
				&=\left[\frac{gH^2}{2}\right]_h(x_i)+g \overline{\eta}\left(B_h (x_i)-B_h (x_L) \right) -  \left[ \frac{gB^2}{2}\right]_h(x_i)+ \left[ \frac{gB^2}{2}\right]_h(x_L)\\
				&=\left[\frac{g(\overline{\eta}-B)^2}{2}\right]_h(x_i)+g \overline{\eta}\left(B_h (x_i)-B_h (x_L) \right) -  \left[ \frac{gB^2}{2}\right]_h(x_i)+ \left[ \frac{gB^2}{2}\right]_h(x_L)\\
				&=\left[\frac{g(\overline{\eta}^2+B^2-2\overline{\eta}B)}{2}\right]_h(x_i)+g \overline{\eta}\left(B_h (x_i)-B_h (x_L) \right)-\left[ \frac{gB^2}{2}\right]_h(x_i)+ \left[ \frac{gB^2}{2}\right]_h(x_L)\\
				&=\frac{g\overline{\eta}^2}{2}+\left[\frac{gB^2}{2}\right]_h(x_i)-g\overline{\eta}B_h(x_i)+g \overline{\eta}\left(B_h (x_i)-B_h (x_L) \right)-\left[ \frac{gB^2}{2}\right]_h(x_i)+ \left[ \frac{gB^2}{2}\right]_h(x_L)\\
				&=\frac{g\overline{\eta}^2}{2}-g\overline{\eta}B_h (x_L) + \left[ \frac{gB^2}{2}\right]_h(x_L)= \left[ \frac{g\overline{\eta}^2+gB^2-2g\overline{\eta}B}{2}\right]_h(x_L)=const, \quad \forall x_i \in K^1.
			\end{split}
			\label{eq:second_component_of_G_bis}
		\end{align}
		
		We proved that, for the DoFs $x_i$ in the first element, the global flux is equal to a constant independent of the specific DoF.
		Actually, exactly through the same computations, one can show that this holds more in general for any DoF
		\begin{equation}
			G_{h,2}(x_i)= \left[ \frac{g\overline{\eta}^2+gB^2-2g\overline{\eta}B}{2}\right]_h(x_L)=const, \quad \forall i=1,\dots,I.
		\end{equation}
		The key point is that, despite $\frac{\partial}{\partial x} B_h$ being discontinuous across the interfaces of the elements, $B_h$ is continuous, leading to a cancellation effect in the integration over subsequent elements. 
		This completes the proof.
	\end{subequations}
\end{proof}

\bibliography{biblio}
\bibliographystyle{plain}


\end{document}